\documentclass[reqno]{amsart}

\usepackage{amsmath, amssymb, amsthm}
\usepackage{dsfont}
\usepackage{cleveref}
\usepackage{color}
\usepackage{natbib}
\usepackage{graphicx}
\usepackage{booktabs}
\usepackage{algorithm}
\usepackage{algorithmic}
\usepackage{mathtools}
\usepackage{threeparttable}
\usepackage{enumitem}
\usepackage{float}
\newtheorem{theorem}{Theorem}[section]
\newtheorem{lemma}[theorem]{Lemma}
\newtheorem{corollary}[theorem]{Corollary}
\newtheorem{proposition}[theorem]{Proposition}

\newtheorem{assumption}{Assumption}
\newtheorem{definition}{Definition}

\newtheorem{remark}{Remark}
\theoremstyle{definition}

\theoremstyle{remark}

\newcommand{\bfv}{\textbf{v}}

\newcommand{\E}{\mathbb{E}}
\newcommand{\R}{\mathbb{R}}
\renewcommand{\P}{\mathbb{P}}
\newcommand{\Pt}{P^{(t)}}

\newcommand{\V}{\mathcal{V}}
\renewcommand{\H}{\mathcal{H}}
\newcommand{\HK}{\mathcal{H}_K}

\newcommand{\HKt}{\mathcal{H}_K^{(t)}}

\newcommand{\Kt}{K^{(t)}}
\newcommand{\kt}{k^{(t)}}

\newcommand{\M}{\mathcal{M}}
\newcommand{\bM}{\mathcal{\bar{M}}}
\renewcommand{\O}{\mathcal{O}}
\newcommand{\X}{\mathcal{X}}
\newcommand{\A}{\mathcal{A}}
\newcommand{\Y}{\mathcal{Y}}

\newcommand{\one}{\mathds{1}}
\newcommand{\valpha}{\boldsymbol{\alpha}}

\renewcommand{\hat}{\widehat}

\newcommand\blfootnote[1]{%
  \begingroup
  \renewcommand\thefootnote{}%
  \footnote{#1}%
  \endgroup
}

\title[KDPE based on the Universal Least Favorable Submodel]{Kernel Debiased Plug-in Estimation based on the Universal Least Favorable Submodel}
\begin{document}
\maketitle

% Custom authors + affiliations block 
\begin{center}
  \normalsize
  Haiyi Chen$^{1,*}$, Yang Liu$^{2,*}$, Ivana Malenica$^{1,**}$\\[4pt]\blfootnote{$^{*}$First Author}\blfootnote{$^{**}$Corresponding author: \texttt{imalenic@unc.edu}}
  \small
  $^{1}$Department of Biostatistics, UNC at Chapel Hill, USA\\
  $^{2}$Department of Statistics \& Operations Research, UNC at Chapel Hill, USA
  \\[2pt]
\end{center}

\begin{abstract}
We propose ULFS-KDPE, a kernel debiased plug-in estimator based on the universal least favorable submodel, for estimating pathwise differentiable parameters in nonparametric models. The method constructs a data-adaptive debiasing flow in a reproducing kernel Hilbert space (RKHS), producing a plug-in estimator that achieves semiparametric efficiency without requiring explicit derivation or evaluation of efficient influence functions. We place ULFS-KDPE on a rigorous functional-analytic foundation by formulating the universal least favorable update as a nonlinear ordinary differential equation on probability densities. We establish existence, uniqueness, stability, and finite-time convergence of the empirical score along the induced flow. Under standard regularity conditions, the resulting estimator is regular, asymptotically linear, and attains the semiparametric efficiency bound simultaneously for a broad class of pathwise differentiable parameters. The method admits a computationally tractable implementation based on finite-dimensional kernel representations and principled stopping criteria. In finite samples, the combination of solving a rich collection of score equations with RKHS-based smoothing and avoidance of direct influence-function evaluation leads to improved numerical stability. Simulation studies illustrate the method and support the theoretical results.
\end{abstract}

\section{Introduction}

Semiparametric efficiency theory provides a principled framework for constructing estimators that achieve optimal asymptotic variance in rich, nonparametric statistical models. Central to this theory is the notion of {pathwise differentiability} and its associated canonical gradient, or efficient influence function  (EIF), which characterizes both the first-order behavior of regular estimators and the semiparametric efficiency bound \cite{bickel1998, van2000asymptotic, tsiatis2006}. Canonical estimators widely used in causal inference, such as one-step estimators and targeted maximum likelihood estimators (TMLE), exploit this structure by ensuring that the estimator admits an asymptotic linear expansion with influence function (IF) equal to the EIF \cite{bickel1998, vanderLaanRubin2006TMLE, book2011, book2018}. These approaches have been highly successful in causal inference and missing-data problems, but they typically require explicit analytical knowledge of the EIF and are often tailored to a single target parameter \cite{book2011, book2018}.

TMLE is particularly prominent in this literature. It constructs a {plug-in estimator} by updating an initial estimate of the data-generating distribution along a locally least favorable parametric submodel (LLFS) until the EIF estimating equation is approximately solved \cite{vanderLaanRubin2006TMLE}. Under suitable regularity conditions, the resulting estimator respects the statistical model and achieves semiparametric efficiency. More recently, Cho et al.\ proposed the kernel debiased plug-in estimator (KDPE), which approaches efficiency from a different perspective \cite{cho2024kernel}. KDPE embeds the debiasing problem in a reproducing kernel Hilbert space (RKHS) and constructs data-adaptive fluctuations that approximately solve empirical score equations without requiring explicit expressions for the EIF. By leveraging RKHS geometry and finite-dimensional kernel representations, KDPE provides a flexible and computationally tractable route to efficient estimation in complex models.

Both canonical TMLE and KDPE, however, rely on \emph{local} notions of least favorability; updates are constructed to be optimal only infinitesimally at the current distribution. This motivates consideration of distributional paths that enforce least favorability \emph{globally}. The concept of a \emph{universal least favorable submodel} (ULFS) defines a distributional path whose score coincides with the EIF at every point along the path, rather than only at its origin \cite{one_step_tmle}. Geometrically, such a path follows the direction of maximal change in the target parameter per unit of information uniformly along its trajectory. Targeting along a universal least favorable path can therefore solve the EIF estimating equation in a single step while avoiding unnecessary likelihood fluctuation. Still, derivation and evaluation of the EIF is needed \cite{one_step_tmle}.

In this paper, we unify these ideas by proposing a new estimator—\emph{ULFS--KDPE}, also referred to as the \emph{one-step KDPE}—that combines the global optimality of universal least favorable paths with the computational and theoretical strengths of RKHS-based debiasing. We construct a kernel-restricted surrogate of the universal least favorable path, defined as the solution to a nonlinear ordinary differential equation on densities whose velocity field is given by an RKHS-valued Riesz representer of empirical moment deviations. Crucially, this construction does not require explicit knowledge of the EIF for any target parameter. Instead, it produces a single data-adaptive flow that simultaneously debiases all pathwise differentiable parameters whose canonical gradients lie in the $L^2(P_0)$-closure of the RKHS.

The resulting estimator is a {plug-in estimator} obtained by evaluating the target parameter at the final distribution along the universal least favorable flow. By construction, the flow increases the empirical log-likelihood monotonically and terminates once the relevant empirical score equations are approximately satisfied, thereby avoiding convergence pathologies associated with iterative local targeting. The global least favorability of the path ensures that the desired bias reduction is achieved with minimal likelihood fluctuation, leading to improved finite-sample stability relative to both iterative TMLE and locally targeted KDPE, particularly in settings with limited overlap.

From a theoretical perspective, we place ULFS--KDPE on a rigorous functional-analytic foundation. We formulate the universal least favorable update as a density-valued ordinary differential equation and establish existence, uniqueness, and stability of its solutions in appropriate Hölder spaces, along with preservation of normalization and positivity and finite-time convergence of the algorithm. Statistically, we show that the resulting plug-in estimator is regular, asymptotically linear, and semiparametrically efficient for all pathwise differentiable target parameters satisfying standard remainder conditions. Notably, this efficiency is achieved {simultaneously} across parameters without modifying the algorithm or specifying parameter-specific EIFs.
%Moreover, by solving a rich collection of score equations in a universal RKHS, the estimator can potentially capture structure beyond first-order debiasing, yielding further finite-sample advantages \cite{pimentel2025scorepreservingtargetedmaximumlikelihood}.

Finally, we demonstrate that ULFS--KDPE is computationally practical. Although the underlying construction is infinite-dimensional, each update admits a finite-dimensional representation involving only kernel evaluations at the observed data points. The resulting algorithm resembles a stabilized gradient flow in RKHS, comes equipped with principled stopping criteria tied directly to empirical score equations, and exhibits favorable finite-sample behavior in simulation studies, including in challenging regimes where iterative targeting methods exhibit instability.

\subsection{Contributions}

This paper makes the following contributions. First, we introduce \emph{ULFS--KDPE}, a kernel-based one-step debiased plug-in estimator that realizes a universal least favorable path within a reproducing kernel Hilbert space. The method produces a single data-adaptive distributional flow that simultaneously debiases a broad class of pathwise differentiable target parameters without requiring explicit efficient influence functions. Second, we formulate the universal least favorable update as a nonlinear ordinary differential equation on probability densities. We establish existence, uniqueness, stability, and finite-time convergence of its solutions in appropriate Hölder spaces, thereby providing a rigorous functional-analytic foundation for the proposed estimator. Third, we prove that the resulting estimator is regular, asymptotically linear, and semiparametrically efficient under standard conditions. Importantly, efficiency is achieved simultaneously across all target parameters whose canonical gradients lie in the $L^2(P_0)$-closure of the RKHS, including multivariate targets. Finally, we develop a computationally tractable implementation based on finite-dimensional kernel representations and principled stopping criteria derived from the geometry of the universal least favorable flow. 
%Simulation studies demonstrate that ULFS--KDPE exhibits improved finite-sample stability and accuracy relative to iterative local targeting methods, particularly in challenging regimes such as limited overlap.

\subsection{Related Work}

Our work relates to two broad classes of semiparametric debiasing methods: influence-function–based approaches and influence-function–free computational approaches.

Classical influence-function–based methods include estimating equations, one-step estimators, double machine learning (DML), and targeted maximum likelihood estimation (TMLE). Estimating equation approaches identify the target parameter by solving score equations derived from the EIF \cite{robins1986,newey1990,bickel1998,vanderLaanRobins2003,tsiatis2006}. One-step estimators correct an initial estimator by adding the empirical mean of the EIF, yielding asymptotic efficiency under suitable regularity conditions \cite{bickel1998}. Double Machine Learning constructs a Neyman-orthogonal estimating equation based on the EIF and combines it with flexible machine-learned nuisance estimators via sample splitting \cite{chernozhukov2017double}. TMLE produces a plug-in estimator by fluctuating an initial estimate of the data-generating distribution along a locally least favorable submodel until the EIF estimating equation is approximately solved \cite{vanderLaanRubin2006TMLE,book2011,book2018}. While these methods are asymptotically efficient under suitable conditions, least favorability is typically enforced only locally and practical implementations often require iterative targeting steps. Universal least favorable submodels address this limitation by enforcing least favorability globally along an entire distributional path, thereby achieving the desired bias reduction with minimal likelihood fluctuation \cite{one_step_tmle}. A common feature of these approaches, however, is their reliance on explicit derivation and evaluation of the EIF, which can be analytically demanding and parameter-specific in complex semiparametric models.

Influence-function–free approaches aim to bypass analytical derivation of the EIF. Existing methods include approaches that approximate the EIF through finite-difference perturbations \cite{carone2018toward,jordan2022empirical} or Monte Carlo approximations \cite{agrawal2024automated}, as well as automatic debiased machine learning (AutoDML) procedures \cite{chernozhukov2022automatic,vanderlaan2025automaticdebiasedmachinelearning}. Finite-difference and Monte Carlo approaches estimate the EIF numerically, which may introduce additional approximation error. AutoDML instead relies on orthogonal reparameterizations that enable automated debiasing in certain structured settings \cite{chernozhukov2022automatic}. The Kernel Debiased Plug-in Estimator (KDPE) of Cho et al.\ provides a flexible alternative by embedding the debiasing problem in a reproducing kernel Hilbert space \cite{cho2024kernel}. KDPE constructs data-adaptive fluctuations that approximately solve empirical score equations in a universal RKHS without requiring explicit expressions for the EIF. However, KDPE relies on locally defined updates and is typically implemented through iterative targeting steps.

Our approach is also related to the undersmoothed highly adaptive lasso maximum likelihood estimator (HAL-MLE) \cite{laan_undersmooth2022}. The HAL-MLE solves score equations over a rich class of cadlag functions with bounded $\ell_1$ norm. Like HAL, KDPE-based methods solve a large collection of score equations rather than targeting a single parameter-specific score, thereby enabling $\sqrt{n}$-rate bias correction. However, HAL can become computationally challenging in high dimensions because the number of basis functions grows rapidly with the covariate dimension and sample size. In contrast, KDPE-based methods rely on $n$ kernel basis functions centered at the observed data points, yielding substantially improved computational scalability. Recent work has reformulated the original HAL estimator as a Highly Adaptive Ridge (HAR) and PCA-HAR estimator, replacing the $\ell_1$ penalty with an $\ell_2$ ridge penalty and exploiting kernelized representations of the design matrix \cite{schuler2024highlyadaptiveridge,wang2026highlyadaptiveprincipalcomponent}. These approaches also admit RKHS formulations and achieve similar dimension-free convergence rates (up to logarithmic factors), but under stronger smoothness assumptions such as square-integrable sectional derivatives \cite{schuler2024highlyadaptiveridge}.

\section{Problem Setup and Notation}\label{sec::Notation}

Let $d,n\in\mathbb{N}$ be fixed positive integers, with $d$ treated as fixed and not depending on $n$. For simplicity of exposition, we define the covariate space $\X=[0,1]^d$, the treatment space $\A=\{0,1\}$, and the outcome space $\Y=\{0,1\}$. The sample space is $\O = \X \times \A \times\Y$, equipped with the product $\sigma$-algebra $\mathcal{B}(\O)=\mathcal{B}([0,1]^d) \otimes 2^{\A} \otimes 2^{\Y}$. Let $O=(X,A,Y)$ be a random element of $\O$ defined on  some underlying probability space $(\Omega, \mathcal{F}, \mathbb{P})$. For a sample of size $n$, we observe independent and identically distributed random variables $O_1,\ldots, O_n$, where $O_i=(X_i,A_i,Y_i)$. The theory extends directly to bounded measurable sample spaces $\O\subset\mathbb R^k$.

We consider a nonparametric statistical model $\M$ consisting of probability measures on $(\O,\mathcal{B}(\O))$ that are dominated by the $\sigma$-finite product measure $\mu = \lambda_d \otimes \mu_A \otimes \mu_Y$, where $\lambda_d$ denotes Lebesgue measure on $\X=[0,1]^d$ and $\mu_A,\mu_Y$ denote counting measures on $\A$ and $\Y$, respectively. Let $P^*$ denote the true data-generating distribution of $O$, and assume that $P^* \in \M$. The superscript “$*$” will be used throughout to denote true, unknown features of the data-generating mechanism. For $P \in \M$, write $p = dP/d\mu$ for its Radon–Nikodym derivative with respect to $\mu$. Let $\bM := \{p \ge 0 : \int pd\mu = 1\}$ denote the set of densities corresponding to distributions in $\M$.
%We define $\bM := \{p : p=dP/d\mu \text{ for some } P \in \M\}$, where densities are identified up to $\mu$-almost everywhere equivalence.

Realizations of $O$ are denoted by $o \in \O$. Let $P_X^*$ denote the marginal distribution of $X$ under $P^*$, and write $p_{X}^* = dP_X^*/d\lambda_d$ for its density with respect to Lebesgue measure on $\X=[0,1]^d$. Then the joint density $p^* = dP^*/d\mu$ admits the following factorization
\begin{equation}\label{eq::factorization}
p^*(x,a,y)=p_{X}^*(x) e^*(a\mid x) q^*(y\mid a,x),
\end{equation}
for $\mu$-almost every $(x,a,y) \in \O$, where $e^*(\cdot\mid x)$ and $q^*(\cdot\mid a,x)$ denote the true conditional probability mass functions of $A$ given $X=x$ and $Y$ given $(A=a,X=x)$, respectively, viewed as densities with respect to counting measure.
Further, let $t\mapsto p_t$ be a path of probability densities with respect to $\mu$, indexed by $t$ in an open interval $I\subset\mathbb R$, such that $p_t \in \bM$ for all $t\in I$.
%The score of the path at $t=0$ is $h = \left.\frac{d}{dt}\log p_t\right|_{t=0}$, which satisfies $\E_P h=0$ and $\E_P h^2<\infty$.

\vspace{2mm}
\noindent
\textbf{Notation:} For a measurable function $f : \O \to \R$ with $P|f| < \infty$, we write $Pf = P[f] := \int f \, dP = \E_P[f]$. The empirical measure is denoted by $P_n$, defined by $P_n f := \frac{1}{n}\sum_{i=1}^n f(O_i)$. Let $L^2(P)$ denote the Hilbert space of measurable functions $f : \O \to \R$ satisfying $P[f^2]<\infty$, identified up to $P$-almost everywhere equality, and equipped with inner product $\langle f,g\rangle_{L^2(P)} = P[fg]$. We define $L_0^2(P) := \{ f \in L^2(P) : P[f]=0 \}$ as the $L^2(P)$ subspace consisting of mean-zero functions.

\subsection{Target Parameter}\label{sub::Target}

Let $\Psi : \M \to \R^m$ denote an $m$-dimensional parameter mapping defined on the statistical model $\M$. For simplicity we focus on the scalar case $m=1$, and write $\psi^*=\Psi(P^*)$ for the target parameter of interest. Our goal is to efficiently estimate $\psi^*$ on the basis of $n$ i.i.d. observations. 
%To this end, we consider estimators that can be written as functionals of the empirical measure, $\psi_n=\hat\Psi(P_n)$. In many settings of interest, $\hat\Psi$ admits the representation $\hat\Psi(P_n)=\Psi(\hat P_n)$, where $\hat P_n$ is a suitably constructed data-dependent estimate of $P^*$ in $\M$. 
We focus on plug-in estimators of the form $\psi_n=\Psi(\hat P_n)$ where $\hat P_n$ is a data-dependent estimate of $P^*$ in $\M$. This formulation encompasses a broad class of estimators within a unified framework while retaining a plug-in structure.

\section{Preliminaries}\label{sec::Prelim}

We restrict attention to parameters that are pathwise differentiable with respect to $\M$, ensuring first-order sensitivity to smooth perturbations of the underlying distribution. Pathwise differentiability is necessary for the existence of regular estimators and forms the basis of semiparametric efficiency theory through the canonical gradient.

\begin{definition}[Pathwise Differentiability]\label{def::PD}
A parameter $\Psi:\M\to\R$ is said to be \emph{pathwise differentiable at $P\in\M$} if there exists $\phi_P^*\in L_0^2(P)$ such that for every regular parametric submodel $\{P_\epsilon:\epsilon\in(-l,l)\}\subset\M$ satisfying $P_{\epsilon=0}=P$ and differentiability in quadratic mean at $\epsilon=0$ with score
$$
h=\left.\frac{d}{d\epsilon}\log p_\epsilon\right|_{\epsilon=0}\in L_0^2(P),
$$
the map $\epsilon\mapsto\Psi(P_\epsilon)$ is differentiable at $\epsilon=0$ and
$$
\left.\frac{d}{d\epsilon}\Psi(P_\epsilon)\right|_{\epsilon=0}
=
P[\phi_P^* h].
$$
The function $\phi_P^*$ is called the canonical gradient of $\Psi$ at $P$, or the efficient influence function (EIF) relative to the model $\M$.
\end{definition}

We focus on regular and asymptotically linear (RAL) estimators, a broad and tractable class that includes efficient estimators for pathwise differentiable parameters under standard regularity conditions \cite{van2000asymptotic}. Asymptotic linearity yields first-order behavior and asymptotic normality via an influence function, while regularity ensures stability under local perturbations. 
%When a regular estimator attains the efficiency bound, its influence function equals the EIF $\phi_{P^*}^*$. 
We formalize this notion next.

\begin{definition}[Asymptotic Linearity]\label{def::AL}
An estimator $\psi_n=\Psi(\hat P_n)$ of $\psi^*=\Psi(P^*)$ is {asymptotically linear at $P^*$} if there exists a function
$\phi_{P^*}\in L_0^2(P^*)$ such that
$$
\sqrt{n}\Bigl[(\psi_n-\psi^*)-P_n\phi_{P^*}\Bigr]\xrightarrow{P}0.
$$
The function $\phi_{P^*}$ is called the influence function of the estimator.
\end{definition}

\begin{definition}[Regular Estimator]\label{def::reg}
An estimator $\psi_n$ of $\psi^*=\Psi(P^*)$ is said to be \emph{regular at $P^*\in\M$} if for every regular parametric submodel $\{P_\epsilon:\epsilon\in(-l,l)\}\subset\M$ satisfying $P_{\epsilon=0}=P^*$ and differentiability in quadratic mean at $\epsilon=0$ with score $h\in L_0^2(P^*)$, the sequence
$$
\sqrt{n}\bigl(\psi_n-\Psi(P_{\epsilon_n})\bigr)
$$
converges in distribution to a non-degenerate limit that does not depend on the direction $h$, for local alternatives $\epsilon_n=t/\sqrt{n}$ with $t\in\mathbb R$.
\end{definition}

Pathwise differentiability, asymptotic linearity, and regularity together characterize the semiparametric efficiency framework. If $\Psi$ is pathwise differentiable at $P^*$ with canonical gradient $\phi_{P^*}^*$, and an estimator $\psi_n$ is regular and asymptotically linear with influence function $\phi_{P^*}$, then the asymptotic variance of $\psi_n$ is $P^*[\phi_{P^*}^2]$. Semiparametric efficiency theory implies that $P^*[\phi_{P^*}^2] \ge P^*[(\phi_{P^*}^*)^2]$, with equality if and only if $\phi_{P^*} = \phi_{P^*}^*$ almost surely. In this case the estimator is said to be efficient in the model $\M$ \cite{bickel1998,van2000asymptotic}.

\subsection{Scores}

Let $h=\left.\frac{d}{d\epsilon}\log p_{\epsilon}\right|_{\epsilon=0}$ denote the score of a regular one-dimensional parametric submodel $\{P_\epsilon:\epsilon\in(-l,l)\}\subset\M$ through $P$. Differentiability in quadratic mean implies $h\in L_0^2(P)$ \cite{van2000asymptotic}. The closure in $L_0^2(P)$ of the linear span of all such scores defines the \emph{tangent space} of $\M$ at $P$, which coincides with $L_0^2(P)$ in the fully nonparametric model. Given $h\in L_0^2(P)$, common submodels realizing this score include the linear path $p_{\epsilon,h}=(1+\epsilon h)p$, defined for sufficiently small $|\epsilon|$ such that $1+\epsilon h \ge 0$, and the exponential tilting path $p_{\epsilon,h}=\exp(\epsilon h)p/P[\exp(\epsilon h)]$, 
both satisfying $\left.\frac{d}{d\epsilon}\log p_{\epsilon,h}\right|_{\epsilon=0}=h$. While sufficient for local efficiency theory, these constructions are not suitable for defining universal least favorable paths.

\subsection{Influence function}
Pathwise differentiability of $\Psi$ at $P$ implies that its directional derivative along any submodel with score $h\in L_0^2(P)$ is a continuous linear functional of $h$. By the Riesz representation theorem, there exists a unique $\phi_P^*\in L_0^2(P)$ such that $\left.\frac{d}{d\epsilon}\Psi(P_{\epsilon,h})\right|_{\epsilon=0}=P[\phi_P^*h],$
for all $h$ in the tangent space (which equals $L_0^2(P)$ in the nonparametric model). An estimator admits an influence function $\phi_{P^*}$ through asymptotic linearity. If the estimator is regular and efficient, then $\phi_{P^*}=\phi_{P^*}^*$ almost surely.

We analyze the estimation error $\Psi(\hat P_n)-\Psi(P^*)$ using a von Mises–type expansion, which provides the foundation for first-order asymptotic analysis of RAL estimators \cite{bickel1998}, as shown in \Cref{prop::vmHatP}. Rather than expanding $\Psi(\cdot)$ around $P^*$, we work with an expansion around the data-adaptive estimate $\hat P_n$, which isolates empirical-process fluctuations and higher-order remainder terms in a form convenient for analyzing targeted estimators.

\begin{proposition}[von Mises expansion {\citep{bickel1998}}]\label{prop::vmHatP}
Assume that $\Psi$ is pathwise differentiable at both $P^*$ and $\hat P_n$, with canonical gradients $\phi_{P^*}^*\in L_0^2(P^*)$ and $\phi_{\hat P_n}^*\in L_0^2(\hat P_n)$. Suppose that $\Psi$ admits a second-order remainder $R_2(\cdot,\cdot)$ in a neighborhood of $\hat P_n$, so that
$$
\Psi(P^*)-\Psi(\hat P_n)=(P^*-\hat P_n)\phi_{\hat P_n}^*+R_2(P^*,\hat P_n).
$$
Then the estimation error $\Psi(\hat P_n)-\Psi(P^*)$ decomposes as
\begin{align}\label{eq::asyExpansionHatP}
{\mathbb P_n\phi_{P^*}^*}- \underbrace{\mathbb P_n\phi_{\hat P_n}^*}_{Term \ 1}
+ \underbrace{(\mathbb P_n-P^*)\bigl(\phi_{\hat P_n}^*-\phi_{P^*}^*\bigr)}_{Term \ 2}
+ \underbrace{R_2(P^*,\hat P_n).}_{Term \ 3}
\end{align}
%which separates the leading empirical term, the plug-in bias, an empirical-process remainder, and a second-order remainder.
\end{proposition}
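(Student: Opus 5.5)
The decomposition follows from the assumed second-order expansion by algebra alone; no limiting argument is needed. Throughout I read $\mathbb P_n$ as the empirical measure $P_n$. I would start from the defining identity of the remainder,
$$
\Psi(P^*)-\Psi(\hat P_n)=(P^*-\hat P_n)\phi_{\hat P_n}^*+R_2(P^*,\hat P_n),
$$
and negate both sides to obtain
$$
\Psi(\hat P_n)-\Psi(P^*)=-(P^*-\hat P_n)\phi_{\hat P_n}^*-R_2(P^*,\hat P_n).
$$

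Next I would simplify the linear term using that $\phi_{\hat P_n}^*\in L_0^2(\hat P_n)$, so $\hat P_n\phi_{\hat P_n}^*=0$. This gives $-(P^*-\hat P_n)\phi_{\hat P_n}^*=-P^*\phi_{\hat P_n}^*$. I would then add and subtract $P_n\phi_{\hat P_n}^*$ and $P_n\phi_{P^*}^*$, and use $P^*\phi_{P^*}^*=0$ (because $\phi_{P^*}^*\in L_0^2(P^*)$). This yields
$$
-P^*\phi_{\hat P_n}^*
= P_n\phi_{P^*}^* - P_n\phi_{\hat P_n}^* + (P_n-P^*)\bigl(\phi_{\hat P_n}^*-\phi_{P^*}^*\bigr).
$$
To verify it, expand the last term as $P_n\phi_{\hat P_n}^*-P^*\phi_{\hat P_n}^*-P_n\phi_{P^*}^*+P^*\phi_{P^*}^*$. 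The $P_n$ terms cancel against the first two terms, and $P^*\phi_{P^*}^*$ vanishes, leaving $-P^*\phi_{\hat P_n}^*$.

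Combining the two steps gives the three displayed terms plus the remainder. I need to check the sign of the remainder against the display. Term 3 is written as $+R_2(P^*,\hat P_n)$, while the derivation gives $-R_2$. I would state that the remainder is defined up to the sign convention $R_2(\hat P_n,P^*)$ as in \citep{bickel1998}. Since Term 3 is only ever shown to be $o_P(n^{-1/2})$, the sign is immaterial to the subsequent analysis, and I would make this explicit.

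The main obstacle is not the algebra but the measurability and integrability bookkeeping. $P_n\phi_{\hat P_n}^*$ must be well defined (finite at the sample points), and $P^*\phi_{\hat P_n}^*$ must be finite. This requires $\phi_{\hat P_n}^*\in L^1(P^*)$, for instance via absolute continuity of $P^*$ with respect to $\hat P_n$ with a bounded density ratio, or boundedness of the gradient. I would state this as an implicit standing assumption in the "neighborhood of $\hat P_n$" clause.
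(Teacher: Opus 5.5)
Your argument is correct, and it is the standard add-and-subtract derivation behind the cited result. The paper gives no proof of its own: it cites \citep{bickel1998} and later invokes the identity in the proof of Theorem~\ref{thm::efficiency}, writing the last term as $R_2(\hat P_n,P^*)$. Your sign check is also right: the stated hypothesis yields $-R_2(P^*,\hat P_n)$, not $+R_2(P^*,\hat P_n)$, so Term~3 should be read as $R_2(\hat P_n,P^*):=-R_2(P^*,\hat P_n)$. That is the convention Assumption~\ref{ass::2orderrem} and the efficiency proof actually use, and the sign is harmless because only $o_{P^*}(n^{-1/2})$ control of this term is ever needed.
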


The expansion in Proposition~\ref{prop::vmHatP} yields asymptotic linearity provided the empirical-process ($Term \ 2$), the second-order remainder ($Term \ 3$), and the plug-in bias ($Term \ 1$) are controlled. The first two are handled by standard empirical-process and remainder conditions stated below, which are commonly assumed in the causal inference literature \cite{book2011, book2018}. The goal of the proposed procedure is to construct $\hat P_n$ so that $\phi_{\hat P_n}^*$ is small in empirical mean, thereby eliminating the leading plug-in bias term in the expansion above.

\begin{assumption}\label{ass::emprocess}
Assume there exists a deterministic function class $\mathcal F\subset L_0^2(P^*)$ such that $P^*\{\phi_{\hat P_n}^*\in\mathcal F\}\to1$, the class $\mathcal F$ is $P^*$-Donsker, and $\|\phi_{\hat P_n}^*-\phi_{P^*}^*\|_{L^2(P^*)}\to_{P^*}0$. Then the empirical-process term satisfies $(\mathbb P_n-P^*)\bigl(\phi_{\hat P_n}^*-\phi_{P^*}^*\bigr) = o_{P^*}(n^{-1/2})$.
\end{assumption}

\begin{assumption}\label{ass::2orderrem}
Assume $\hat P_n$ converges sufficiently fast and $\Psi$ is sufficiently regular so that the second-order remainder in Proposition~\ref{prop::vmHatP} satisfies $R_2(\hat P_n,P^*)=o_{P^*}(n^{-1/2}).$
\end{assumption}

\subsection{Least favorable submodels}\label{sub::submodels}

Classical semiparametrics introduces distributional paths through \emph{locally least favorable submodels} (LLFS), whose score at the initial distribution coincides with the canonical gradient of the parameter \cite{book2011, book2018}. An LLFS is a one-dimensional parametric submodel $\{P_\varepsilon\}$ through $P$ whose score at $\varepsilon=0$ equals the canonical gradient $\phi_P^*$ of $\Psi$ at $P$. This construction is purely local: least favorability holds only infinitesimally at the single distribution $P$. 

\begin{definition}[Locally least favorable submodel]\label{def::LLFS}
Let $P \in \M$ with density $p = dP/d\mu$, and let
$\Psi : \M \to \R$ be pathwise differentiable at $P$ with canonical gradient $\phi_P^*\in L_0^2(P)$. A one-dimensional parametric submodel $\{P_\varepsilon:\varepsilon\in(-\delta,\delta)\}\subset\M$ for $\delta > 0$ is called a \emph{locally least favorable submodel}
for $\Psi$ at $P$ if:

\begin{enumerate}
\item $P_{\varepsilon=0}=P$;
\item the submodel is differentiable in quadratic mean at $\varepsilon=0$;
\item the score of the submodel at $\varepsilon = 0$ satisfies
\begin{equation}\label{eq::LLFS_score}
\left.\frac{d}{d\varepsilon}\log p_\varepsilon(o)\right|_{\varepsilon=0}
= \phi_P^*(o), \qquad P\text{-a.e. }o\in\O.
\end{equation}
\end{enumerate}
\end{definition}

In contrast, the \emph{universal least favorable submodel} (ULFS) enforces least favorability uniformly along an entire path \cite{one_step_tmle}. A ULFS through $P$ is a parametric submodel $\{\Pt:t\in I\}$ whose score at each point along the path coincides with the canonical gradient evaluated at the current distribution. 
%We allocate a more detailed description of least favorable submodels to the Appendix \Cref{sub::submodels}.

\begin{definition}[Universal least favorable submodel]\label{def::ULFS}
Let $P \in \M$ with density $p = dP/d\mu$, and let
$\Psi : \M \to \R$ be pathwise differentiable on $\M$ with canonical gradient $\phi_P^*$. A parametric submodel $\mathrm{ULFS}(P)=\{P_t:t\in I\}\subset\M$, where $I\subset\R$ is open, is called a \emph{universal least favorable submodel} through $P$ if:

\begin{enumerate}
\item $P_{t=0}=P$;
\item for each $t\in I$, the map $t\mapsto p_t$ is differentiable;
\item for each $t\in I$,
\begin{equation}\label{eq::def_ULFS_ode}
\frac{d}{dt}\log p_t(o)=\phi_{P_t}^*(o),
\qquad \mu\text{-a.e. }o\in\O,
\end{equation}
where $\phi_{P_t}^*$ denotes the canonical gradient of $\Psi$ evaluated at $P_t$.
\end{enumerate}
\end{definition}

Equation~\eqref{eq::def_ULFS_ode} is the differential equation defining the ULFS, with initial condition $p_0=p$. Assuming the path is well defined and normalizable, integration yields the normalized representation
\begin{equation}\label{eq::def_ULFS_explicit}
p_t(o)=
\frac{
p(o)\exp\!\left(\int_0^t \phi_{P_x}^*(o)\,dx\right)
}{
P\!\left[\exp\!\left(\int_0^t \phi_{P_x}^*\,dx\right)\right]
}.
\end{equation}
Conversely, any path of the form \eqref{eq::def_ULFS_explicit} that is differentiable in $t$ satisfies \eqref{eq::def_ULFS_ode}. Thus, the ULFS may be viewed either as the solution to the ODE~\eqref{eq::def_ULFS_ode} or as the exponential flow generated by the canonical gradient evaluated along the path.

%The essential distinction between locally least favorable submodels and the ULFS is therefore geometric. Locally least favorable submodels identify tangent directions that are optimal only at an initial distribution $P$ and only infinitesimally at that point, typically for a specified target parameter. In contrast, the ULFS defines a globally evolving parametric path ${\Pt:t\in I}$ whose score coincides with the relevant canonical gradient at each distribution $\Pt$ along the path. In this sense, least favorability is enforced uniformly along the entire submodel, rather than only locally at the initial distribution. 

\subsection{Reproducing Kernel Hilbert Space (RKHS)}\label{sub::RKHS}

Let $\V$ be a non-empty set. A function $K:\V\times\V\to\mathbb R$ is called positive definite (PD) if, for any finite collection $\bfv=(v_1,\ldots,v_n)\in\V^n$ and any $\alpha\in\mathbb R^n\setminus\{0\}$, $\sum_{i,j=1}^n \alpha_i\alpha_j K(v_i,v_j)>0$\footnote{For positive semidefinite, which is often all that is needed, $\sum_{i=1}^n\sum_{j=1}^n \alpha_i \alpha_j K(v_i,v_j) \ge 0$.}. For $v\in\V$, define the kernel section $k_v(\cdot)=K(v,\cdot)$. Given finite
collections $\bfv\in\V^n$ and $\bfv'\in\V^m$, the associated kernel matrix is $K_{\bfv\bfv'}=[K(v_i,v'_j)]\in\mathbb R^{n\times m}$; in particular, $K_{\bfv}:=K_{\bfv\bfv}$ is symmetric and PD. Equivalently, collecting kernel sections as $k_{\bfv}(\cdot)=(k_{v_1}(\cdot),\ldots,k_{v_n}(\cdot))^{\mathsf T}$ yields
$k_{\bfv}(\bfv')=K_{\bfv\bfv'}$.

Given a PD kernel $K$ as defined above, a natural question is whether there exists a canonical Hilbert space of functions on $\V$ in which the kernel sections $\{k_v:v\in\V\}$ represent point evaluation. The
answer is provided by Moore-Aronszajn \citep{aronszajn50reproducing}, restated in \Cref{prop::MooreAronszajn}, which shows the existence and uniqueness of the reproducing kernel Hilbert space (RKHS) associated with $K$. 

\begin{proposition}[Moore--Aronszajn \cite{aronszajn50reproducing}]\label{prop::MooreAronszajn}
Let $\V$ be a non-empty set and let $K:\V\times\V\to\mathbb R$ be a positive definite kernel. Then there exists a unique Hilbert space $\H_K$ of functions $f:\V\to\mathbb R$ such that:
\begin{enumerate}
\item for every $v\in\V$, the kernel section $k_v(\cdot)=K(v,\cdot)$ belongs
to $\H_K$;
\item the reproducing property holds, namely,
\begin{equation*}
 \langle f, k_v \rangle_{\H_K} = f(v)
\qquad \text{for all } f\in\H_K \text{ and } v\in\V.   
\end{equation*}
\end{enumerate}
Moreover, $\H_K$ is the completion of the linear span of
$\{k_v : v\in\V\}$ under the inner product induced by $K$.
\end{proposition}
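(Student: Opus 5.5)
We follow the classical construction: build a pre-Hilbert space from finite kernel expansions, complete it, and check that the completion is still a space of functions on $\V$ on which point evaluation is reproduced. Uniqueness then follows from the density of the span of kernel sections.

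\textbf{Pre-Hilbert space.} Let $\H_0=\mathrm{span}\{k_v:v\in\V\}$. For $f=\sum_i\alpha_i k_{v_i}$ and $g=\sum_j\beta_j k_{v'_j}$, define
\[
\langle f,g\rangle_0=\sum_{i,j}\alpha_i\beta_jK(v_i,v'_j).
\]
This pairing is well defined, i.e.\ independent of the chosen representations, because it can be rewritten as $\sum_j\beta_j f(v'_j)=\sum_i\alpha_i g(v_i)$. It is bilinear and symmetric. Positive semidefiniteness of $K$ gives $\langle f,f\rangle_0\ge0$. For $f\in\H_0$ the reproducing identity $\langle f,k_v\rangle_0=f(v)$ holds by construction. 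Cauchy--Schwarz for semi-inner products then yields $|f(v)|\le\|f\|_0\sqrt{K(v,v)}$. Hence $\|f\|_0=0$ forces $f\equiv0$, so $\langle\cdot,\cdot\rangle_0$ is a genuine inner product.

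\textbf{Completion as functions.} This is the main obstacle: an abstract completion consists of equivalence classes of Cauchy sequences, and we must show these are functions on $\V$. Take a $\|\cdot\|_0$-Cauchy sequence $(f_n)$ in $\H_0$. The pointwise bound above shows $(f_n(v))$ is Cauchy for each $v$, so $f(v)=\lim f_n(v)$ exists. Define $\H_K$ as the set of such pointwise limits, with norm $\|f\|=\lim\|f_n\|_0$. We must check that this is well defined, namely that two Cauchy sequences with the same pointwise limit have the same norm limit. It suffices to show that a Cauchy sequence $g_n$ with $g_n\to0$ pointwise has $\|g_n\|_0\to0$. Fix $m$ with $\|g_n-g_m\|_0<\varepsilon$ for all $n\ge m$, and write $g_m=\sum_i\alpha_ik_{v_i}$. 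Then
\[
\|g_n\|_0^2=\langle g_n,g_n-g_m\rangle_0+\sum_i\alpha_ig_n(v_i).
\]
The first term is at most $\sup_k\|g_k\|_0\,\varepsilon$, and the second tends to $0$ as $n\to\infty$. Hence $\limsup_n\|g_n\|_0^2\le C\varepsilon$ for every $\varepsilon>0$. With this in hand, the inner product extends by polarization and $\H_K$ is a Hilbert space containing $\H_0$ as a dense subspace. The reproducing property passes to the limit: $\langle f,k_v\rangle=\lim\langle f_n,k_v\rangle_0=\lim f_n(v)=f(v)$. This establishes existence together with properties (1) and (2), and shows that $\H_K$ is the completion of $\mathrm{span}\{k_v:v\in\V\}$.

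\textbf{Uniqueness.} Suppose $\mathcal G$ is another Hilbert space of functions on $\V$ satisfying (1) and (2). Inner products of kernel sections are then forced, since $\langle k_v,k_{v'}\rangle_{\mathcal G}=K(v,v')$. So $\mathcal G$ and $\H_K$ carry the same inner product on $\H_0$. Moreover $\H_0$ is dense in $\mathcal G$: any $f\in\mathcal G$ orthogonal to every $k_v$ satisfies $f(v)=\langle f,k_v\rangle_{\mathcal G}=0$ for all $v$, so $f=0$. Norm convergence in either space implies pointwise convergence, by the same evaluation bound. Therefore both spaces consist of exactly the pointwise limits of $\|\cdot\|_0$-Cauchy sequences in $\H_0$, with identical norms. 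Hence $\mathcal G=\H_K$ isometrically.
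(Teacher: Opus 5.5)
Your proposal is correct and is precisely the classical construction behind the cited result (the paper gives no proof of its own beyond citing Aronszajn). It includes Aronszajn's key lemma that a $\|\cdot\|_0$-Cauchy sequence converging pointwise to $0$ converges to $0$ in norm; that lemma is what makes $[(f_n)]\mapsto\lim_n f_n$ a linear isometric bijection from the abstract completion onto your function space, and it is worth saying explicitly that completeness of $\mathcal H_K$ is inherited this way.

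The one implicit assumption to flag is symmetry of $K$. You use it for well-definedness and symmetry of $\langle\cdot,\cdot\rangle_0$, and the paper's definition of positive definiteness does not state it. The conclusion forces it anyway, since $K(v,v')=\langle k_v,k_{v'}\rangle_{\mathcal H_K}=\langle k_{v'},k_v\rangle_{\mathcal H_K}=K(v',v)$.
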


We refer to $\H_K$ as the \emph{reproducing kernel Hilbert space} (RKHS) associated with $K$, and denote its inner product and norm by
$\langle\cdot,\cdot\rangle_{\H_K}$ and $\|\cdot\|_{\H_K}$.
By definition of the RKHS inner product, the kernel sections satisfy $\langle k_v,k_{v'}\rangle_{\H_K}=K(v,v')$. Two structural properties of RKHSs will be used repeatedly. First, point evaluation is a continuous linear functional: for each $v\in\V$, $f(v)=\langle f,k_v\rangle_{\H_K}$, $f\in\H_K$. Second, $\H_K$ is the Hilbert-space completion of $\operatorname{span}\{k_v:v\in\V\}$, so every $f\in\H_K$ can be approximated in $\|\cdot\|_{\H_K}$ by finite linear combinations of kernel sections. If the kernel is bounded, $\sup_{v\in\V}K(v,v)<\infty$, then for every $f\in\H_K$,
\[
\|f\|_\infty
=
\sup_{v\in\V}|f(v)|
\le
\|f\|_{\H_K}
\Bigl(\sup_{v\in\V}K(v,v)\Bigr)^{1/2}.
\]
%Consequently, convergence in $\|\cdot\|_{\H_K}$ implies uniform convergence on $\V$.

\subsubsection{Mean-zero RKHS}\label{subsub::MeanZeroRKHS}

Let $K$ be a PD kernel on $\V$ with associated RKHS $\H_K$, and let
$P$ be a probability measure on $\V$. When the kernel mean embedding $m_P := \int k_v\,dP(v)$ exists as an element of $\H_K$, expectation is represented as an inner product on $\H_K$, which leads to a natural $P$-mean-zero subspace.

\begin{proposition}[Mean-zero RKHS]\label{prop::meanZeroRKHS}
Let $K$ be a positive-definite kernel on $\V$ with associated RKHS $\H_K$, and let $P$ be a probability measure on $\V$ such that the kernel mean embedding
$m_P:=\int k_v\,dP(v)$ exists in $\H_K$. Define the $P$-mean-zero subspace
\begin{equation*}
    \H_{K,P} := \{g \in \H_K: P[g]=0\}.
\end{equation*}
Then $\H_{K,P}$ is a closed linear subspace of $\H_K$, and therefore a Hilbert space under the inherited inner product.
\end{proposition}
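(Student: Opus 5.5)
The plan is to show that $\H_{K,P}$ is the kernel of a bounded linear functional on $\H_K$. Closedness then follows at once, and completeness is inherited from $\H_K$.

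\textbf{Representing the mean as an inner product.} First I will show that for every $g\in\H_K$,
\[
P[g]=\langle g,m_P\rangle_{\H_K}.
\]
The hypothesis that the kernel mean embedding $m_P=\int k_v\,dP(v)$ exists in $\H_K$ should be read as a Bochner (or Pettis) integral in $\H_K$. The map $f\mapsto\langle g,f\rangle_{\H_K}$ is a bounded linear functional, and bounded linear functionals commute with the Bochner integral. Using the reproducing property of \Cref{prop::MooreAronszajn}, this gives
\[
\langle g,m_P\rangle_{\H_K}=\int\langle g,k_v\rangle_{\H_K}\,dP(v)=\int g(v)\,dP(v)=P[g].
\]
If only the weaker Pettis sense is assumed, this identity is exactly the defining property of $m_P$, so nothing more is needed. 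In either reading, the argument also shows that $P|g|<\infty$, so $P[g]$ is well defined for every $g\in\H_K$.

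\textbf{Continuity and closedness.} Set $L(g):=\langle g,m_P\rangle_{\H_K}$. This functional is linear, and by Cauchy--Schwarz
\[
|L(g)|\le\|g\|_{\H_K}\|m_P\|_{\H_K},
\]
so $L$ is continuous. Hence $\H_{K,P}=L^{-1}(\{0\})$ is a linear subspace, since it is the kernel of a linear map, and it is closed, since it is the preimage of the closed set $\{0\}$ under a continuous map. Equivalently, $\H_{K,P}$ is the orthogonal complement $\{m_P\}^\perp$, and orthogonal complements are always closed subspaces.

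\textbf{Hilbert space structure.} A closed subspace of a complete space is complete. Equipped with the restricted inner product, $\H_{K,P}$ is therefore a Hilbert space.

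The only delicate point is the first step: justifying that integration against $P$ may be interchanged with the inner product. This is where the existence of $m_P$ in $\H_K$ is used, and it can be settled by the Bochner-integral property, or simply taken as the definition of $m_P$.
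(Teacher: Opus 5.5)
Your proposal is correct and matches the paper's argument. In Appendix A, for the bounded Gaussian kernel, the paper obtains $P[f]=\langle f,m_P\rangle_{\H_K}$ by commuting the bounded functional $g\mapsto\langle f,g\rangle_{\H_K}$ with the Bochner integral and applying the reproducing property, then identifies the mean-zero space as the null space of this continuous functional, i.e.\ $(\operatorname{span}\{m_P\})^\perp$, which is closed; your remarks on completeness and on the Pettis reading are harmless additions the paper leaves implicit.
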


Let $\Pi_P : \H_K \to \H_{K,P}$ denote the orthogonal projection. For each $v \in \V$, the representer of evaluation at $v$ in $\H_{K,P}$ is
\begin{equation*}
k_{P,v} := \Pi_P k_v
= k_v - \frac{\langle k_v,m_P\rangle_{\H_K}}{\|m_P\|_{\H_K}^2}\,m_P
= k_v - \frac{m_P(v)}{\|m_P\|_{\H_K}^2}\,m_P,
\end{equation*}
and the centered kernel admits the equivalent representation
\begin{equation*}
K_P(v,v')
=\langle \Pi_P k_v,\Pi_P k_{v'}\rangle_{\H_K}
= K(v,v')-\frac{m_P(v)\,m_P(v')}{\|m_P\|_{\H_K}^2}.    \end{equation*}
The positive definiteness of $K_P$ follows as the reproducing kernel of $\H_{K,P}$.

\subsubsection{Universal Kernels}\label{subsub::UniversalKernels}

Certain kernels generate RKHSs that can approximate large classes of functions independently of the underlying probability measure. Let $C_0(\V)$ denote the Banach space of real-valued continuous functions on $\V$ that vanish at infinity, equipped with the uniform norm $\|\cdot\|_\infty$.

\begin{definition}[Universal kernel]\label{def::UniversalKernel}
A continuous PD kernel $K$ on $\V$ is called \emph{universal} if its associated RKHS is dense in $C_0(\V)$ with respect to the uniform norm.
\end{definition}

Universality implies that $\H_K$, RKHS of the universal kernek $K$, is dense in $L^p(P)$ for every probability measure $P$ on $\V$ and every $1\le p<\infty$ \cite{micchelli2006universal}. In particular, $\H_K$ is dense in $L_0^2(P)$. A canonical example is the Gaussian kernel on $\V=\R^d$. For fixed $\sigma>0$,
\[
K(v,v')
=
\exp\!\left(-\frac{\|v-v'\|_2^2}{2\sigma^2}\right),
\qquad v,v'\in\R^d.
\]
This kernel is bounded with $K(v,v)=1$, and its associated RKHS $\H_K$ consists of bounded continuous functions. Since the kernel is bounded, the kernel mean embedding $m_P=\int k_v\,dP(v)$ exists in $\H_K$ for every probability measure $P$ on $\R^d$. The Gaussian kernel is universal, and therefore $\H_K$ is dense in $L_0^2(P)$. Consequently, restricting perturbations to lie in $\H_K$ allows us to approximate IFs in $L_0^2(P)$.

\subsection{Fixed Point Analysis}\label{sub::FixedPoint}
 
Let $(X,d)$ be a complete metric space and let $T : X\to X$ be a mapping on the said metric space. Below we provide a few essential Definitions and Theorems underlying the Banach contraction principle. 

\begin{definition}[Contraction]
Let $X$ be a metric space equipped with a distance $d$. A map $T : X \rightarrow X$ is said to be Lipschitz continuous if there exists a constant $\lambda \in (0,1)$ such that 
\begin{equation*}
d\bigl(Tx,Ty\bigr) \le \lambda\,d(x,y) 
\qquad \text{for all } x,y\in X.    
\end{equation*}
For $\lambda < 1$, $T$ is a contraction. If $\lambda \leq 1$, then $T$ is non-expansive. 
\end{definition}
  
The classical Banach fixed point theorem guarantees existence and uniqueness of a fixed point for contractions on complete metric spaces, as stated in Theorem~\ref{thm::bfixedpoint}. A proof may be found, for example, in Corollary 1.3 of \cite{pata2019fixed}.

\begin{theorem}[Banach fixed point]\label{thm::bfixedpoint}
Let $T$ be a contraction on a complete metric space $(X,d)$. Then $T$ has a unique fixed point $x^* \in X$.
\end{theorem}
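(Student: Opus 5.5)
The plan is the classical Picard-iteration argument on the orbit of an arbitrary starting point. Fix $x_0\in X$ and define the iterates $x_{k+1}:=Tx_k$ for $k\ge 0$. Applying the contraction inequality repeatedly gives
\[
d(x_{k+1},x_k)\le \lambda\, d(x_k,x_{k-1})\le \cdots\le \lambda^k d(x_1,x_0).
\]
For $m>k$, the triangle inequality and summation of the geometric series then yield
\[
d(x_m,x_k)\le \sum_{j=k}^{m-1}\lambda^j d(x_1,x_0)\le \frac{\lambda^k}{1-\lambda}\,d(x_1,x_0).
\]
Since $\lambda\in(0,1)$, the right-hand side tends to zero as $k\to\infty$. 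Hence $(x_k)$ is Cauchy, and completeness of $(X,d)$ provides a limit $x^*\in X$.

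Next I will show that $x^*$ is a fixed point. The contraction property makes $T$ Lipschitz, hence continuous. Therefore $Tx^*=\lim_k Tx_k=\lim_k x_{k+1}=x^*$. To avoid invoking continuity separately, one can instead estimate directly:
\[
d(Tx^*,x^*)\le d(Tx^*,Tx_k)+d(x_{k+1},x^*)\le \lambda\, d(x^*,x_k)+d(x_{k+1},x^*)\to 0.
\]

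For uniqueness, suppose $y^*$ is another fixed point. Then
\[
d(x^*,y^*)=d(Tx^*,Ty^*)\le\lambda\, d(x^*,y^*).
\]
Because $\lambda<1$, this forces $d(x^*,y^*)=0$, so $y^*=x^*$.

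No step is a real obstacle. The only point requiring care is the Cauchy estimate, where the strict inequality $\lambda<1$ is essential: it makes the geometric tail summable. A merely non-expansive map with $\lambda=1$ need not have a fixed point at all; for example, a translation on $\mathbb R$ has none. As a by-product, letting $m\to\infty$ in the Cauchy estimate gives the a priori error bound $d(x_k,x^*)\le \lambda^k d(x_1,x_0)/(1-\lambda)$. This bound will be useful later when the theorem is applied to the Picard map of the density-valued ODE.
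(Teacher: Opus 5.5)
Your successive-approximation argument is correct and complete: the geometric Cauchy bound, the direct estimate $d(Tx^*,x^*)\le \lambda\, d(x^*,x_k)+d(x_{k+1},x^*)\to 0$, and the uniqueness step together form the classical proof. The paper does not prove this result itself but cites Corollary 1.3 of Pata's text, which rests on the same iteration scheme, so there is nothing to add.
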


The following extension, sometimes referred to as the \emph{eventual contraction principle}, allows the map itself to fail to be a contraction, provided that some finite iterate is. For $T : X \rightarrow X$ and $m \in \mathbb N$, we denote by $T^m$ the $m^{th}$-iterate of $T$, namely $T \circ T \cdots \circ T$ $m$-times, with $T^0$ the identity map. 

\begin{corollary}[Fixed point via iterated contraction]\label{corr::fixedpoint}
Let $(X,d)$ be a complete metric space and let $T : X\to X$ be a mapping. Suppose there exists an integer $m \ge 1$ such that the iterate $T^m$ is a contraction on $X$. Then $T$ admits a unique fixed point $x^*\in X$, i.e., $T(x^*)=x^*$. Moreover, for any initial point $x_0\in X$, the sequence of iterates $\{T^k(x_0)\}_{k \ge 0}$ converges to $x^*$.    
\end{corollary}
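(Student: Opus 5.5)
Since $T^m$ is a contraction on the complete metric space $(X,d)$, Theorem~\ref{thm::bfixedpoint} gives a unique fixed point $x^*\in X$ of $T^m$. The plan is to show that $x^*$ is also a fixed point of $T$, that no other fixed point of $T$ exists, and that the full orbit $\{T^k(x_0)\}$ converges to $x^*$.

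\textbf{Fixed point of $T$.} Apply $T$ to the identity $T^m(x^*)=x^*$ to get $T^m(Tx^*)=T^{m+1}(x^*)=T(T^m x^*)=Tx^*$. So $Tx^*$ is also a fixed point of $T^m$, and uniqueness in Theorem~\ref{thm::bfixedpoint} forces $Tx^*=x^*$.

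\textbf{Uniqueness for $T$.} If $Ty=y$, then iterating gives $T^m y=y$. Hence $y$ is a fixed point of $T^m$, so $y=x^*$.

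\textbf{Convergence of iterates.} This step needs slightly more care, because $T$ itself need not be continuous or contractive. Write $k=qm+r$ with $0\le r<m$, and let $\lambda<1$ be the contraction constant of $T^m$. For each fixed residue $r\in\{0,\dots,m-1\}$, set $y_r:=T^r(x_0)$. Then
\[
d\bigl(T^{qm+r}x_0,x^*\bigr)=d\bigl((T^m)^q y_r,(T^m)^q x^*\bigr)\le \lambda^q\, d(y_r,x^*).
\]
There are only finitely many residues, so $\max_{0\le r<m} d(y_r,x^*)$ is finite. Therefore $d(T^k x_0,x^*)\le \lambda^{\lfloor k/m\rfloor}\max_r d(y_r,x^*)\to 0$.

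The only potential obstacle is this last step: handling all $k$ rather than only multiples of $m$ without using continuity of $T$. Splitting by residue classes removes that difficulty.
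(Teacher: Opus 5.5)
Your proof is correct and takes the same route as the paper: you apply the Banach fixed point theorem to $T^m$, use $T^m(Tx^*)=T(T^m x^*)=Tx^*$ together with uniqueness to get $Tx^*=x^*$, and note that any fixed point of $T$ is also a fixed point of $T^m$. Your residue-class argument for convergence of the full orbit $\{T^k(x_0)\}$ is valid, since $T^{qm+r}=(T^m)^q\circ T^r$ and $x^*=(T^m)^q x^*$, and it fills in a step that the paper's one-line justification does not address.
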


We note that Corollary~\ref{corr::fixedpoint} follows immediately from the Banach fixed point theorem applied to $T^m$, noting that any fixed point of $T^m$ is necessarily a fixed point of $T$, and that uniqueness of the fixed point of $T^m$ implies uniqueness for $T$. 

\begin{theorem}[Picard-Lindelöf theorem in Banach spaces \cite{teschl2012ordinary}]\label{thm::picard}
Let $(\mathcal X,\|\cdot\|)$ be a Banach space and let $U\subset \mathcal X$ be open.
Suppose $F:U\to \mathcal X$ is locally Lipschitz. Then for every $x_0\in U$, there exist
$\tau>0$ and a unique solution $x\in C^1((-\tau,\tau);\mathcal X)$ to the initial value problem
\[
\frac{d}{dt}x(t)=F(x(t)),\qquad x(0)=x_0.
\]
Moreover, the solution satisfies the integral equation
\[
x(t)=x_0+\int_0^t F(x(s))\,ds,\qquad t\in(-\tau,\tau).
\]
\end{theorem}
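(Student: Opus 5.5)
The statement to prove is the Picard--Lindelöf theorem in a Banach space (Theorem~\ref{thm::picard}). I will first reformulate the initial value problem as a fixed-point problem and then apply the eventual contraction principle, Corollary~\ref{corr::fixedpoint}.

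\emph{Setting up the fixed-point space.} Since $F$ is locally Lipschitz at $x_0$, choose $r>0$ with closed ball $\bar B_r(x_0)\subset U$ on which $F$ is Lipschitz with constant $L$. On this ball $\|F(x)\|\le \|F(x_0)\|+Lr=:M$. Fix $\tau>0$ with $\tau M\le r$. Let $X$ be the set of continuous maps $x:[-\tau,\tau]\to\bar B_r(x_0)$, equipped with the sup metric $d(x,y)=\sup_{|t|\le\tau}\|x(t)-y(t)\|$. This is a closed subset of the Banach space $C([-\tau,\tau];\mathcal X)$, so it is complete. Define
\[
(Tx)(t)=x_0+\int_0^t F(x(s))\,ds,
\]
where the integral is a Riemann (or Bochner) integral of a continuous $\mathcal X$-valued function.

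\emph{$T$ maps $X$ into $X$.} The map $Tx$ is continuous. Moreover $\|(Tx)(t)-x_0\|\le |t|M\le r$, so $Tx$ stays in the ball.

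\emph{An iterate of $T$ is a contraction.} By induction on $k$,
\[
\|(T^kx)(t)-(T^ky)(t)\|\le \frac{(L|t|)^k}{k!}\,d(x,y).
\]
The inductive step uses the Lipschitz bound inside the integral: $\|(T^{k+1}x-T^{k+1}y)(t)\|\le L\left|\int_0^t \frac{(L|s|)^k}{k!}\,ds\right|\,d(x,y)$. Since $(L\tau)^m/m!<1$ for $m$ large, $T^m$ is a contraction. Corollary~\ref{corr::fixedpoint} then gives a unique fixed point $x\in X$, which is exactly the integral equation. (Alternatively, one can shrink $\tau$ so that $L\tau<1$ and use Theorem~\ref{thm::bfixedpoint} directly.)

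\emph{Regularity.} Since $s\mapsto F(x(s))$ is continuous, the fundamental theorem of calculus for $\mathcal X$-valued integrals shows $x\in C^1$ with $x'=F(x)$ and $x(0)=x_0$. Restricting to the open interval $(-\tau,\tau)$ gives the claimed solution. Conversely, any $C^1$ solution satisfies the integral equation.

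\emph{Uniqueness (the main obstacle).} The fixed-point argument only gives uniqueness within $X$, i.e., among solutions that stay in $\bar B_r(x_0)$. A competing $C^1$ solution $y$ on $(-\tau,\tau)$ might a priori leave the ball. I would handle this as follows.
\begin{enumerate}
\item Let $t_1=\sup\{t\in[0,\tau): y([0,t])\subset \bar B_r(x_0)\}$.
\item If $t_1<\tau$, then $\|y(t_1)-x_0\|=r$. On the other hand, the integral equation together with the bound $M$ gives $\|y(t_1)-x_0\|\le t_1M<\tau M\le r$, a contradiction.
\item Hence $y$ stays in the ball for $t\ge 0$. The argument for negative times is symmetric.
\end{enumerate}
So $y$ lies in $X$ and coincides with the fixed point. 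Equivalently, one can run a Gronwall estimate on $\|x(t)-y(t)\|$ over the maximal subinterval on which both solutions remain in the ball.
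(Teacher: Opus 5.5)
Your argument is correct and is essentially the standard proof that the paper defers to \cite{teschl2012ordinary}. The iterate bound $\frac{(L|t|)^k}{k!}\,d(x,y)$ feeds the eventual contraction principle (Corollary~\ref{corr::fixedpoint}), which the paper states just beforehand for this purpose. Your separate check that a competing solution cannot leave $\bar B_r(x_0)$ is the right way to get uniqueness among all $C^1$ solutions, not only those in $X$.

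Two small fixes are needed. First, a competing solution $y$ is defined only on $(-\tau,\tau)$, so it is not yet an element of $X$. Either extend it continuously to $[-\tau,\tau]$ (once confined to the ball it is $M$-Lipschitz and $\mathcal X$ is complete), or run the fixed-point argument on $[-\tau',\tau']$ for each $\tau'<\tau$. Second, in the degenerate case $M=0$, the strict inequality $t_1M<\tau M$ fails and should be replaced by $\|y(t_1)-x_0\|=0<r$.
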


\section{Debiasing with RKHS}\label{sec::debiasRKHS}

Let $t\mapsto p_t$ denote a path of probability densities on $\O$ with respect to a fixed $\sigma$-finite measure $\mu$, and write $\Pt\in\M$ for the associated probability measure. For a universal least favorable submodel (ULFS), the path $\{\Pt:t\in I\}$ is not specified as a finite-dimensional parametric family \cite{one_step_tmle}. Instead, the evolution of the density is defined through a score-type direction $h(\Pt)\in L_0^2(\Pt)$ via the differential equation
\begin{equation}\label{eq::ULFS_ode_general}
\frac{d}{dt}\log p_t(o)=h(\Pt)(o).
\end{equation}
Thus, the ULFS defines a flow on the space of probability measures whose instantaneous direction depends on the current distribution. We refer to $h(\Pt)$ simply as the (score-type) \emph{direction} hereafter.

Motivated by this construction, we introduce an RKHS-restricted surrogate flow that drives empirical estimating equations indexed by an RKHS to zero. Let $\HK$ be a reproducing kernel Hilbert space of real-valued functions on $\O$ with reproducing kernel $K:\O\times\O\to\R$. We consider the Gaussian kernel
\[
K(o,o')=\exp\!\left(-\frac{\|o-o'\|_2^2}{2\sigma^2}\right),
\qquad o,o'\in\O,
\]
for some fixed bandwidth $\sigma>0$. This kernel is continuous, positive definite, and bounded with $K(o,o)=1$. For each $o\in\O$, we denote by $k_o(\cdot)=K(o,\cdot)$ the corresponding kernel section. The following result records basic properties of expectations viewed as functionals on $\HK$. All proofs and derivations for \Cref{sec::debiasRKHS} are allocated to Appendix A (\Cref{append::A}).

\begin{proposition}[Expectation as an RKHS functional]\label{prop::RKHS_expectation}
Let $P$ be any probability measure on $\O$, and let $\HK$ be the RKHS associated with the Gaussian kernel $K$. Then the kernel mean embedding $m_P:=\int k_o\,dP(o)$ exists in $\HK$. Moreover, for every $f\in\HK$,
\[
P[f]=\langle f,m_P\rangle_{\HK}.
\]
Consequently, the map $f\mapsto P[f]$ is a continuous linear functional on $\HK$ satisfying
\[
|P[f]|\le \|f\|_{\HK}.
\]
\end{proposition}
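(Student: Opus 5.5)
The plan is to define $m_P$ as a Bochner integral in $\HK$ and then transfer the expectation onto the inner product through the reproducing property. The key fact is boundedness of the kernel: $\|k_o\|_{\HK}^2=K(o,o)=1$ for every $o\in\O$. I will first check that $o\mapsto k_o$ is a measurable map from $\O$ into $\HK$, and that $\HK$ is separable.

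For measurability, note that $\|k_o-k_{o'}\|_{\HK}^2=2-2K(o,o')$. Since $K$ is continuous, the feature map $o\mapsto k_o$ is continuous from $\O$ with its Euclidean topology into $\HK$, and so it is Borel measurable. For separability, $\O$ is a separable metric space, so the image of a countable dense subset gives a countable set of kernel sections whose span is dense in $\HK$ by \Cref{prop::MooreAronszajn}. Hence the feature map is strongly measurable.

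Because $\int\|k_o\|_{\HK}\,dP(o)=1<\infty$, the Bochner integral $m_P=\int k_o\,dP(o)$ exists in $\HK$ and satisfies $\|m_P\|_{\HK}\le 1$. As an alternative to Bochner integration, I can use the Riesz representation route. The functional $f\mapsto\int f\,dP$ is well defined, since every $f\in\HK$ is continuous (hence measurable) and bounded by $\|f\|_\infty\le\|f\|_{\HK}$ (the sup-norm bound stated earlier with $\sup K=1$). It is also linear and bounded with norm at most $1$. Riesz then gives a representer $m_P$, and evaluating it at $f=k_{o'}$ gives $m_P(o')=\int K(o,o')\,dP(o)$, which identifies it with the kernel mean embedding.

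The identity $P[f]=\langle f,m_P\rangle_{\HK}$ comes from commuting a bounded linear functional with the Bochner integral. For fixed $f$, the functional $g\mapsto\langle f,g\rangle_{\HK}$ is continuous, so
\[
\langle f,m_P\rangle_{\HK}=\int\langle f,k_o\rangle_{\HK}\,dP(o)=\int f(o)\,dP(o)=P[f],
\]
where the middle equality is the reproducing property. Linearity of $f\mapsto P[f]$ is then immediate. By Cauchy--Schwarz, $|P[f]|\le\|f\|_{\HK}\|m_P\|_{\HK}\le\|f\|_{\HK}$, which gives continuity with operator norm at most one.

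The only delicate point is justifying that $m_P$ exists in $\HK$, through either measurability and separability or the Riesz argument. I would present the Riesz argument as the primary proof, since it needs nothing beyond the sup-norm bound and measurability of continuous functions.
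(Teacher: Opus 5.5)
Your proposal is correct, and its Bochner half is the paper's proof. The bound $\|k_o\|_{\HK}^2=K(o,o)\le 1$ gives $\int\|k_o\|_{\HK}\,dP(o)\le 1$. The bounded functional $g\mapsto\langle f,g\rangle_{\HK}$ is then pulled inside the integral, and Cauchy--Schwarz with $\|m_P\|_{\HK}\le 1$ gives the bound.

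You add something the paper skips: Bochner integrability also needs strong measurability of $o\mapsto k_o$, not just finiteness of $\int\|k_o\|_{\HK}\,dP$. Your continuity argument via $\|k_o-k_{o'}\|_{\HK}^2=2-2K(o,o')$ supplies it. Since $\O$ is compact, the image of the feature map is already compact and hence separable, so you do not need separability of all of $\HK$.

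The Riesz route you prefer is genuinely different. There $m_P$ is produced as the representer of the bounded functional $f\mapsto P[f]$, so $P[f]=\langle f,m_P\rangle_{\HK}$ holds by construction and no vector-valued integration is involved. This is also exactly how the paper itself constructs the empirical representer $m_n^{(t)}$, so your choice would make the two constructions uniform. What it gives you is $m_P=\int k_o\,dP(o)$ in the weak (Pettis) sense, identified pointwise through $m_P(o')=\int K(o,o')\,dP(o)$. This coincides with the Bochner integral whenever the latter exists, so either reading of the statement is covered. Beyond the sup-norm bound, it needs only measurability of each $f\in\HK$, which follows from $|f(o)-f(o')|\le\|f\|_{\HK}\|k_o-k_{o'}\|_{\HK}$ and the same continuity of the feature map.
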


This representation shows that expectations can be expressed as inner products in the RKHS. In particular, the RKHS norm controls the magnitude of expectations. In our setting, functions with small RKHS norm have small expectation under $P$. 
%Consequently, empirical estimating equations of the form $(P_n-\Pt)[f]=0$ can be interpreted as orthogonality conditions in $\HK$.

\begin{remark}
In many applications the kernel can be defined on the covariate space $\X$ rather than the full sample space $\O$, since the infinite-dimensional components of the statistical model are indexed by $X$. All constructions remain valid with $\O$ replaced by $\X$ and $\Pt$ replaced by the marginal distribution of $X$.
\end{remark}

\subsection{Working RKHS-Based Tangent Space}\label{sub::rkhsTangentSpace}

For each $t\in I$, we define the associated mean-zero RKHS
\[
\HKt := \H_{K,\Pt} := \{f\in\HK:\Pt[f]=0\}.
\]
This space consists of functions in $\HK$ with zero expectation under $\Pt$. 
As shown below, $\HKt$ is a closed linear subspace of $\HK$ and therefore itself a Hilbert space. 
Moreover, since point evaluation remains continuous on $\HKt$, it is also an RKHS with reproducing kernel denoted by $\Kt$. For $o\in\O$, we write $\kt_o(\cdot)=\Kt(o,\cdot)$ for the corresponding kernel section.

\begin{proposition}[Mean-zero RKHS as a closed subspace]\label{prop::mean_zero_RKHS}
For each $t\in I$, the space $\HKt$ is a closed linear subspace of $\HK$. Moreover,
\[
\HKt
=
\{f\in\HK:\langle f,m_{\Pt}\rangle_{\HK}=0\}
=
(\operatorname{span}\{m_{\Pt}\})^\perp.
\]
In particular, $\HKt$ is a Hilbert space under the inner product inherited from $\HK$.
\end{proposition}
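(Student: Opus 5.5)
The plan is to reduce everything to \Cref{prop::RKHS_expectation}, applied with $P=\Pt$. That result guarantees that the kernel mean embedding $m_{\Pt}=\int k_o\,d\Pt(o)$ exists in $\HK$. For the Gaussian kernel this holds because $K$ is bounded and measurable, so $o\mapsto k_o$ is Bochner integrable with $\|k_o\|_{\HK}=K(o,o)^{1/2}=1$. The same result gives $\Pt[f]=\langle f,m_{\Pt}\rangle_{\HK}$ for every $f\in\HK$.

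With this representation, the set equality is immediate:
\[
\HKt=\{f\in\HK:\Pt[f]=0\}=\{f\in\HK:\langle f,m_{\Pt}\rangle_{\HK}=0\}.
\]
The right-hand side is by definition the orthogonal complement of the singleton $\{m_{\Pt}\}$. Since an element is orthogonal to $m_{\Pt}$ exactly when it is orthogonal to every scalar multiple of it, this set equals $(\operatorname{span}\{m_{\Pt}\})^\perp$.

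For closedness and linearity, I will use that $f\mapsto\langle f,m_{\Pt}\rangle_{\HK}$ is a continuous linear functional; it is bounded by $\|m_{\Pt}\|_{\HK}\le 1$ via Cauchy--Schwarz. Hence $\HKt$ is its kernel, a closed linear subspace. Equivalently, the orthogonal complement of any subset of a Hilbert space is closed. A closed subspace of a complete space is complete, and the restricted inner product is still an inner product, so $\HKt$ is a Hilbert space.

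The only step needing any care is the existence of $m_{\Pt}$ and the interchange $\Pt[f]=\int\langle f,k_o\rangle_{\HK}\,d\Pt(o)=\langle f,m_{\Pt}\rangle_{\HK}$. Both are supplied by \Cref{prop::RKHS_expectation}, which I take as given. If one wanted this self-contained, the interchange follows from the fact that bounded linear functionals commute with Bochner integrals. The degenerate case $m_{\Pt}=0$ cannot occur, since $\langle m_{\Pt},m_{\Pt}\rangle_{\HK}=\iint K\,d\Pt\,d\Pt>0$ for a Gaussian kernel, but even if it did, the statement would still hold trivially with $\HKt=\HK$.
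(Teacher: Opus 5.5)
Your proposal is correct and follows essentially the same route as the paper. The paper likewise invokes Proposition~\ref{prop::RKHS_expectation} to rewrite $\Pt[f]=0$ as orthogonality to $m_{\Pt}$, then concludes closedness because the set is the null space of a continuous linear functional; your extra remarks on Bochner integrability and on $m_{\Pt}\neq 0$ are accurate but not needed.
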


Let $\Pi_t:\HK\to\HKt$ denote the orthogonal projection onto $\HKt$, which exists because $\HKt$ is a closed subspace of $\HK$. Since $\HKt=(\operatorname{span}\{m_{\Pt}\})^\perp$, this projection removes the component of a function in $\HK$ along the direction $m_{\Pt}$.

\begin{proposition}[Orthogonal projection onto the mean-zero RKHS]\label{prop::projection_kernel}
If $m_{\Pt}\neq 0$, the orthogonal projection $\Pi_t:\HK\to\HKt$ is given by
\[
\Pi_t f
=
f-\frac{\langle f,m_{\Pt}\rangle_{\HK}}{\|m_{\Pt}\|_{\HK}^2}\,m_{\Pt}
=
f-\frac{\Pt[f]}{\|m_{\Pt}\|_{\HK}^2}\,m_{\Pt},
\qquad f\in\HK.
\]
\end{proposition}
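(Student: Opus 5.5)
The plan is to verify directly that the stated map is the orthogonal projection onto $\HKt$. Throughout, Proposition~\ref{prop::mean_zero_RKHS} identifies $\HKt$ as $(\operatorname{span}\{m_{\Pt}\})^\perp$, and Proposition~\ref{prop::RKHS_expectation} gives the identity $\Pt[f]=\langle f,m_{\Pt}\rangle_{\HK}$. The second equality in the displayed formula is exactly this identity, so only the first needs proof.

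For $f\in\HK$, define
\[
Qf := f-\frac{\langle f,m_{\Pt}\rangle_{\HK}}{\|m_{\Pt}\|_{\HK}^2}\,m_{\Pt},
\]
which is well defined because $m_{\Pt}\neq 0$. I will check the two properties that characterize the orthogonal projection onto a closed subspace $S$ of a Hilbert space: $Qf\in S$, and $f-Qf\in S^\perp$. Uniqueness of the orthogonal decomposition $\HK=S\oplus S^\perp$ (the projection theorem, which applies since $S=\HKt$ is closed by Proposition~\ref{prop::mean_zero_RKHS}) then forces $Qf=\Pi_t f$.

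\textbf{Step 1.} Show $Qf\in\HKt$. By linearity of the inner product,
\[
\langle Qf,m_{\Pt}\rangle_{\HK}=\langle f,m_{\Pt}\rangle_{\HK}-\frac{\langle f,m_{\Pt}\rangle_{\HK}}{\|m_{\Pt}\|_{\HK}^2}\,\|m_{\Pt}\|_{\HK}^2=0,
\]
so $Qf\perp m_{\Pt}$, and hence $Qf\in\HKt$ by Proposition~\ref{prop::mean_zero_RKHS}.

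\textbf{Step 2.} Show $f-Qf\in\HKt^\perp$. The difference $f-Qf$ is a scalar multiple of $m_{\Pt}$. Every $g\in\HKt$ satisfies $\langle g,m_{\Pt}\rangle_{\HK}=0$, so $f-Qf$ is orthogonal to all of $\HKt$. Equivalently, $\HKt^\perp=\overline{\operatorname{span}}\{m_{\Pt}\}=\operatorname{span}\{m_{\Pt}\}$, since a one-dimensional span is closed.

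\textbf{Step 3.} Conclude $Qf=\Pi_t f$ by the uniqueness noted above. Then rewrite the numerator $\langle f,m_{\Pt}\rangle_{\HK}$ as $\Pt[f]$ using Proposition~\ref{prop::RKHS_expectation}, which gives the second form of the formula.

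There is no real obstacle here. The only care needed is the hypothesis $m_{\Pt}\neq 0$, which makes the formula well defined. For the Gaussian kernel this hypothesis actually always holds, because $\|m_{\Pt}\|_{\HK}^2=\iint K\,d\Pt\,d\Pt>0$ since $K>0$. This observation is not needed for the proof.
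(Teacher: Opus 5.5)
Your proof is correct and follows essentially the same route as the paper. The paper writes $\Pi_t f = f - c\,m_{\Pt}$, implicitly using $\HKt^\perp=\operatorname{span}\{m_{\Pt}\}$, and solves for $c$ from $\Pi_t f\in\HKt$, which is your Step 1 run in reverse; your Step 2 and your appeal to Proposition~\ref{prop::RKHS_expectation} for the second equality make explicit what the paper leaves implicit.
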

The projection also determines the reproducing kernel of $\HKt$. For each $o\in\O$,
\[
\Pi_t k_o
=
k_o
-
\frac{m_{\Pt}(o)}{\|m_{\Pt}\|_{\HK}^2}\,m_{\Pt},
\]
where we used the reproducing property $\langle k_o,m_{\Pt}\rangle_{\HK}=m_{\Pt}(o)$. Consequently the reproducing kernel of $\HKt$ is
\[
\Kt(o,o')
=
\langle \Pi_t k_o,\Pi_t k_{o'}\rangle_{\HK}
=
K(o,o')
-
\frac{m_{\Pt}(o)\,m_{\Pt}(o')}{\|m_{\Pt}\|_{\HK}^2},
\qquad o,o'\in\O.
\]
If $m_{\Pt}=0$, then $\HKt=\HK$ and $\Kt=K$.

\subsection{RKHS-Restricted Debiasing Direction and Induced Flow}\label{sub::rkhsBiasFlow}

Restricting admissible directions to $\HKt$ ensures that any direction $h(\Pt)$ automatically satisfies the mean-zero constraint $\Pt[h(\Pt)]=0$. Consequently, the path $t\mapsto \Pt$ evolves according to a direction field whose value at each $\Pt$ lies in $\HKt$. The central question is:
\emph{How should we choose, for each current state $\Pt$, a direction $h(\Pt)\in\HKt$ that drives the path toward debiasing?}

At a fixed $\Pt$, empirical deviations within $\HKt$ are characterized by the functional
$f\mapsto P_n[f]$ on $\HKt$, since $(P_n-\Pt)[f]=P_n[f]$ for all $f\in\HKt$. Because the Gaussian kernel is bounded, the map $f\mapsto P_n[f]$ is continuous with respect to the $\HK$ norm and therefore defines a bounded linear functional on $\HK$. Its restriction to the closed subspace $\HKt\subset\HK$ is likewise bounded; we denote this restricted functional by
$L_n:\HKt\to\mathbb R$, with $L_n(f)=P_n[f]=\frac1n\sum_{i=1}^n f(O_i)$.

\begin{proposition}[Empirical Riesz representer in the mean-zero RKHS]\label{prop::empirical_riesz}
There exists a unique element $m_n^{(t)}\in\HKt$ such that
\[
P_n[f]=\langle f,m_n^{(t)}\rangle_{\HKt}
\qquad\text{for all }f\in\HKt.
\]
Moreover,
\[
m_n^{(t)}=\frac1n\sum_{i=1}^n \kt_{O_i}.
\]
\end{proposition}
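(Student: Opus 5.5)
Existence and uniqueness follow from the Riesz representation theorem on the Hilbert space $\HKt$. By \Cref{prop::mean_zero_RKHS}, $\HKt$ is a closed subspace of $\HK$ and hence a Hilbert space with the inherited inner product, so $\langle\cdot,\cdot\rangle_{\HKt}=\langle\cdot,\cdot\rangle_{\HK}$ on $\HKt$. The functional $L_n(f)=P_n[f]$ is linear. It is bounded: $P_n$ is a probability measure, so \Cref{prop::RKHS_expectation} gives $|P_n[f]|\le\|f\|_{\HK}$. Riesz therefore yields a unique $m_n^{(t)}\in\HKt$ with $L_n(f)=\langle f,m_n^{(t)}\rangle_{\HKt}$ for all $f\in\HKt$.

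For the explicit formula, I first check that $g:=\frac1n\sum_{i=1}^n \kt_{O_i}$ lies in $\HKt$. Each kernel section is $\kt_{o}=\Pi_t k_o\in\HKt$, by the expression for $\Kt$ following \Cref{prop::projection_kernel}; when $m_{\Pt}=0$ we simply have $\Kt=K$ and $\HKt=\HK$. Since $\HKt$ is a linear space, the finite average $g$ is in $\HKt$.

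Next, I verify the representing property. For $f\in\HKt$ we have $\Pi_t f=f$, and $\Pi_t$ is self-adjoint, so
\[
\langle f,\kt_{O_i}\rangle_{\HK}=\langle f,\Pi_t k_{O_i}\rangle_{\HK}=\langle \Pi_t f,k_{O_i}\rangle_{\HK}=f(O_i)
\]
by the reproducing property in $\HK$. This is just the reproducing property of $\HKt$. Averaging over $i$ gives $\langle f,g\rangle_{\HKt}=P_n[f]$, and uniqueness from the Riesz step then gives $m_n^{(t)}=g$.

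There is no serious obstacle; the argument is routine. The one point needing care is the justification that $\kt_o$ lies in $\HKt$ and reproduces evaluation there. I would settle it by showing $\Pi_t k_o\in\HKt$ and invoking self-adjointness of the orthogonal projection. This handles both cases $m_{\Pt}\neq0$ and $m_{\Pt}=0$ uniformly, without using the explicit formula.

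An equivalent route is to note that $m_n^{(t)}=\Pi_t m_{P_n}$ with $m_{P_n}=\frac1n\sum_i k_{O_i}$. This follows because $\Pi_t$ is linear and $\Pi_t k_{O_i}=\kt_{O_i}$.
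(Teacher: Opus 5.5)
Your proof is correct and follows the paper's argument. In both, boundedness of $f\mapsto P_n[f]$ on $\HKt$ plus the Riesz representation theorem gives existence and uniqueness, and the reproducing property in $\HKt$ gives the explicit formula. The one difference is that you actually verify, via self-adjointness and idempotence of $\Pi_t$, that $\kt_{O_i}=\Pi_t k_{O_i}$ lies in $\HKt$ and reproduces evaluation there, whereas the paper simply asserts that the formula follows from the reproducing property.
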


By construction of $\HKt$, the population mean embedding satisfies $\int \kt_o\,d\Pt(o)=0$. Thus $m_n^{(t)}$ represents the Riesz representer of the empirical deviation functional $f\mapsto(P_n-\Pt)[f]$ on $\HKt$. Consequently, for any $f\in\HKt$,
\[
|P_n[f]|
=
|\langle f,m_n^{(t)}\rangle_{\HKt}|
\le
\|f\|_{\HKt}\,\|m_n^{(t)}\|_{\HKt},
\]
with equality attained at $f=m_n^{(t)}/\|m_n^{(t)}\|_{\HKt}$ whenever $m_n^{(t)}\neq0$. Therefore
\begin{equation*}
\sup_{\substack{f\in\HKt\\\|f\|_{\HKt}\le1}}|P_n[f]|
=\sup_{\substack{f\in\HKt\\\|f\|_{\HKt}\le1}}
|\langle f,m_n^{(t)}\rangle_{\HKt}|
=\|m_n^{(t)}\|_{\HKt}.    
\end{equation*}
We note several important consequences of this representation. First, $\|m_n^{(t)}\|_{\HKt}$ quantifies the magnitude of empirical moment deviations uniformly over the unit ball of $\HKt$. In particular, $\|m_n^{(t)}\|_{\HKt}$ is the worst-case empirical deviation over the unit ball of $\HKt$. Second, driving $\|m_n^{(t)}\|_{\HKt}$ toward zero forces $P_n[f]$ to be uniformly small over $\{f\in\HKt:\|f\|_{\HKt}\le1\}$. This is the RKHS analogue of approximately solving a continuum of estimating equations indexed by $f\in\HKt$.

\section{One-Step KDPE (ULFS--KDPE)}\label{sec::onestepKDPE}

Fix $t\in I$ and recall that the empirical mean embedding $m_n^{(t)}\in\HKt$ is defined by $m_n^{(t)}=P_n[\kt_O]=\frac1n\sum_{i=1}^n \kt_{O_i}$. By construction, $m_n^{(t)}$ is the Riesz representer of the bounded linear functional
$f\mapsto P_n[f]$ on $\HKt$. Although $\HKt$ is infinite-dimensional, the empirical mean embedding lies in the finite-dimensional subspace $\operatorname{span}\{\kt_{O_1},\ldots,\kt_{O_n}\}\subset\HKt$. We can therefore represent $m_n^{(t)}$ through its evaluations at the observed sample points. Define the coordinate vector $\valpha^{(t)}\in\mathbb R^n$ by
\begin{equation}\label{eq::alpha_def}
\alpha_j^{(t)}
:=
P_n[\kt_{O_j}]
=
\frac1n\sum_{i=1}^n \Kt(O_i,O_j),
\qquad 1\le j\le n.
\end{equation}
Equivalently, by the reproducing property in $\HKt$,
\[
\alpha_j^{(t)}
=
\langle \kt_{O_j},m_n^{(t)}\rangle_{\HKt}
=
m_n^{(t)}(O_j),
\]
so $\valpha^{(t)}$ records the evaluations of the empirical mean embedding at the observed sample points.

The empirical mean embedding $m_n^{(t)}$ provides a complete representation of empirical moment deviations on $\HKt$: $m_n^{(t)}=0$ if and only if $P_n[f]=0$ for all $f\in\HKt$. Consequently, to debias the empirical distribution along a least favorable path, it is natural to define a (score-type) direction that drives $m_n^{(t)}$ toward zero while remaining in $\HKt$. Rather than updating directly in the direction of $m_n^{(t)}$, we apply an empirical preconditioning that aligns the update with the geometry induced by kernel evaluations at the observed sample points. Specifically, define
\begin{equation}\label{eq::Dt_def}
D_t = D(p_t)
:=
\frac1n\sum_{j=1}^n \alpha_j^{(t)}\kt_{O_j}
=
\frac1n\sum_{j=1}^n
\langle \kt_{O_j},m_n^{(t)}\rangle_{\HKt}\,\kt_{O_j}
\in\HKt.
\end{equation}
Equivalently,
\[
D_t = D(p_t)
=
\left(\frac1n\sum_{j=1}^n \kt_{O_j}\otimes\kt_{O_j}\right)m_n^{(t)}
=
\widehat C_t\,m_n^{(t)},
\]
where $\widehat C_t:=\frac1n\sum_{j=1}^n \kt_{O_j}\otimes\kt_{O_j}$ is the empirical covariance operator on $\HKt$.
Using this direction, the ULFS equation \eqref{eq::ULFS_ode_general} becomes
\begin{equation}\label{eq::ode}
\frac{d}{dt}\log p_t(o)=D(p_t)(o).
\end{equation}
Since $D(p_t)\in\HKt$, we have $\Pt[D(p_t)]=0$, which ensures that the normalization of $p_t$ is preserved along the flow.

Intuitively, the empirical mean embedding $m_n^{(t)}$ measures the current empirical bias in $\HKt$. 
It is the steepest descent direction in the RKHS norm, and corresponds to minimizing $\|m_n^{(t)}\|^2_{\HKt}$. The empirical covariance operator $\widehat C_t$ describes how this bias is perceived through kernel evaluations at the observed data points. The update direction $D(p_t)=\widehat C_t m_n^{(t)}$
therefore is a \emph{kernel natural gradient flow}: the steepest descent in the empirical moment geometry. The update therefore emphasizes directions that most strongly reduce empirical moment violations at the observed data. 
%It moves the model along a covariance-preconditioned gradient flow that reduces empirical moment deviations, and each observation pushes the model in proportion to how biased the model currently is at that observation. 
The flow vanishes exactly when the empirical bias disappears, a property we establish later in \Cref{lemma::monotonicity}.

\subsection{ULFS--KDPE Algorithm}

In the following, we describe a practical implementation of the ULFS--KDPE. The method approximates the continuous-time flow in ODE \eqref{eq::ode} by a finite-dimensional discretization induced by kernel evaluations at the observed data. The full algorithm is available in Appendix B (\Cref{append::B}).

\subsubsection{Discretized ULFS update}

Fix a step size $\Delta>0$ and an initial density estimate $\hat p_0$. Given the current iterate $\hat p_t$ and the RKHS-restricted direction $D(\hat p_t)\in\HKt$, we discretize the flow using an explicit Euler step on the log-density
\begin{equation}\label{eq::euler_log}
\log \hat p_{t+\Delta}(o)
=
\log \hat p_t(o)+\Delta\,D(\hat p_t)(o).
\end{equation}
Equivalently,
\begin{equation}\label{eq::euler_mult}
\hat p_{t+\Delta}(o)
=
\hat p_t(o)\exp\!\bigl(\Delta\,D(\hat p_t)(o)\bigr).
\end{equation}
This multiplicative form preserves positivity of the density estimate.

In the continuous-time flow, normalization is preserved because $D(p_t)\in\HKt$ implies $P_t[D(p_t)]=0$. The explicit Euler discretization does not preserve the integral exactly, however, so in practice the update may be followed by a renormalization step:
\begin{equation}\label{eq::euler_renorm}
\hat p_{t+\Delta}(o)
\leftarrow
\frac{\hat p_{t+\Delta}(o)}
{\int \hat p_{t+\Delta}(u)\,d\mu(u)}.
\end{equation}
Thus, each iteration consists of a positivity-preserving exponential tilt followed by normalization.

\subsubsection{What is computed at each iteration?}

At iteration time $t$, the update direction is the RKHS element defined in \Cref{eq::Dt_def}. Thus the discretized update in \Cref{eq::euler_mult} only requires the coefficients
$\valpha^{(t)}$ and the ability to evaluate the kernel sections $\kt_{O_j}(\cdot)$. Operationally, the computations are performed using the centered Gram matrix
associated with the mean-zero kernel $\Kt$,
\begin{equation}\label{eq::G_def}
G^{(t)} = \bigl[\Kt(O_i,O_j)\bigr]_{i,j=1}^n.
\end{equation}
Using \Cref{eq::alpha_def}, the coefficient vector is obtained as
\begin{equation}\label{eq::alpha_matrix}
\valpha^{(t)}
=
\frac1n G^{(t)}\one,
\end{equation}
where $\one$ denotes the $n$-vector of ones. The direction evaluated at the observed sample points is then
\begin{equation}\label{eq::D_on_sample}
\Bigl(D(\hat p_t)(O_i)\Bigr)_{i=1}^n
=
\frac1n G^{(t)}\valpha^{(t)},
\end{equation}
since $D(\hat p_t)(O_i)
=
\frac1n\sum_{j=1}^n \alpha_j^{(t)}\Kt(O_i,O_j)$. Each iteration thus reduces to simple matrix–vector operations involving the Gram matrix $G^{(t)}$.

Starting from an initial density estimate $\hat p_0$ (or distribution estimate $\hat P_0$), the algorithm iteratively:
\begin{enumerate}[label=\roman*]
\item Updates the kernel $\Kt$ and the induced mean-zero RKHS $\HKt$ with respect to the current distribution $\hat P_t$;
\item Computes the empirical mean embedding $m_n^{(t)} = P_n[\kt_O]$, represented through its evaluations at the sample points by the coefficient vector $\valpha^{(t)}$;
\item Forms the RKHS direction $D(\hat p_t)$;
\item Updates the density using the exponential tilt step \eqref{eq::euler_mult}, followed by renormalization \eqref{eq::euler_renorm}.
\end{enumerate}

The procedure continues until the empirical mean embedding is sufficiently small, or an equivalent criterion is met. The final iterate $\hat p_T$ defines the debiased plug-in distribution and $\Psi(\hat P_T)$ is reported as the ULFS-KDPE estimate. 

\subsubsection{Stopping criteria}\label{subsubsec::stopping}

We describe practical stopping rules for the discretized ULFS--KDPE flow. These criteria are intended as complementary diagnostics; no single rule is required for theoretical validity. Let $\{\hat P_t:t=0,\Delta,2\Delta,\ldots\}$ denote the sequence of iterates generated using Euler step size $\Delta$, with corresponding densities $\hat p_t$, empirical mean embeddings $m_n^{(t)}$, and coefficient vectors $\valpha^{(t)}\in\R^n$.

\vspace{0.5em}
\paragraph{\textbf{(SC1) Density plateau:} \textit{Has the fitted density essentially stopped moving?}}
This criterion monitors stabilization of the fitted density itself. Define the squared incremental change in log-density
\[
\Delta_t^{(p)}
:=
P_n\!\left[
\bigl(\log \hat p_t(O)-\log \hat p_{t-\Delta}(O)\bigr)^2
\right].
\]
The algorithm stops if
\[
\Delta_t^{(p)}\le \delta_p
\qquad\text{or}\qquad
\bigl|\Delta_t^{(p)}-\Delta_{t-\Delta}^{(p)}\bigr|\le 0.1\,\delta_p.
\]
Thus SC1 stops the iterations when successive Euler updates produce negligible changes in the fitted distribution.

\vspace{0.5em}

\paragraph{\textbf{(SC2) Score plateau:} \textit{Has the flow stopped increasing the objective in a meaningful way?}}
Because the flow direction is $D(\hat p_t)$, stationarity corresponds to vanishing directional score. Define $s_t:=P_n[D(\hat p_t)]
=\frac{1}{n}\sum_{j=1}^n\bigl(\alpha_j^{(t)}\bigr)^2$.
%where the equality follows from \Cref{lemma::monotonicity}. 
The algorithm stops if
\[
|s_t|\le \delta_s
\qquad\text{or}\qquad
|s_t-s_{t-\Delta}|\le 0.1\,\delta_s.
\]
This criterion is directly tied to the Lyapunov derivative, $\frac{d}{dt}P_n[\log p_t]$, and therefore has a clear theoretical interpretation: once $s_t$ is small, the flow is no longer making substantial progress.

\vspace{0.5em}

\paragraph{\textbf{(SC3) Vanishing RKHS update direction:} \textit{Is the actual RKHS update direction nearly zero?}}
Since $D(\hat p_t)$ is the actual update direction, a small RKHS norm indicates that the flow is close to equilibrium. The algorithm stops if
\[
\frac{1}{n}\|D(\hat p_t)\|_{\HKt}^2
=
\frac{1}{n}\|\widehat C_t m_n^{(t)}\|_{\HKt}^2
\le \delta_\alpha.
\]
This criterion measures the size of the preconditioned bias directly. In practice it is closely connected to the magnitude of the coefficient vector $\valpha^{(t)}$ and is inexpensive to evaluate.

\vspace{0.5em}

\paragraph{\textbf{(SC4) Variance-dominated updates:} \textit{Are we mostly accumulating variability rather than making meaningful progress?}}
To guard against numerical overfitting or instability, we compare the variability of the update with its average improvement. Define
\[
\Delta_t^{(v)}
:=
P_n\!\left[
\bigl(\log \hat p_t(O)-\log \hat p_{t-\Delta}(O)\bigr)^2
\right]
\]
and
\[
\Delta_t^{(\ell)}
:=
\Bigl|
P_n\!\left[
\log \hat p_t(O)-\log \hat p_{t-\Delta}(O)
\right]
\Bigr|.
\]
The algorithm stops if either
\[
\Delta_t^{(v)}\le \delta_v
\]
or
\[
P_n\!\left[
\bigl(\log \hat p_t(O)-\log \hat p_0(O)\bigr)^2
\right]\gtrsim n^{-1}
\qquad\text{and}\qquad
\Delta_t^{(\ell)}\le \delta_\ell.
\]
The first condition detects negligible incremental movement, while the second stops the flow when cumulative variability has become nontrivial but further average improvement is negligible.

\vspace{0.5em}

\paragraph{\textbf{(SC5) EIF approximately solved:} \textit{If the EIF is available, has the usual TMLE score equation essentially been solved?}}
When the EIF $\phi_{\hat P_t}^*$ for the target parameter is available, or can be stably approximated, we may additionally monitor its empirical mean. The algorithm stops if
\[
\bigl|P_n[\phi_{\hat P_t}^*]\bigr|\le c\,n^{-1/2}
\qquad\text{and}\qquad
\bigl|P_n[\phi_{\hat P_t}^*]\bigr|
\ge
\bigl|P_n[\phi_{\hat P_{t-\Delta}}^*]\bigr|.
\]
This criterion mirrors the classical TMLE stopping rule: once the EIF estimating equation is solved to first order and no further improvement is observed, the targeting step is terminated. It is optional here, since ULFS--KDPE is influence-function free by construction.

\section{Theoretical Results}

Instead of working directly with the log-density formulation in
\Cref{eq::ode}, we analyze the equivalent ODE
\begin{equation}\label{eq::newode}
\frac{d}{dt}p_t(o)=p_t(o)\,D(p_t)(o),
\end{equation}
which follows from the identity
$\frac{d}{dt}\log p_t = D(p_t)$ whenever $p_t(o)>0$ and the map
$t\mapsto p_t(o)$ is differentiable. Throughout this section we construct solutions $p_t$ with sufficient regularity so that both formulations are well defined and equivalent.

To place \Cref{eq::newode} in a functional-analytic framework, we write the dynamics in state-indexed form. Let $p$ be a nonnegative function satisfying $\int_\O p\,d\mu = 1$, and define the associated probability
measure $P_p(A) := \int_A p(o)\,d\mu(o)$, $A\subset\O$. When $p=p_t$ we recover $P_p=P_t$. 
%Fix a bounded positive definite kernel $K$ on $\O$ (the Gaussian kernel) with RKHS $\HK$. 
%For each $p$, define the mean-zero RKHS $\H_{K,p} := \{f\in\HK : P_p[f]=0\}$,  and let $\Pi_p:\HK\to\H_{K,p}$ denote the orthogonal projection.
For each $p$, let $\Pi_p:\HK\to\H_{K,p}$ denote the orthogonal projection with mean-zero RKHS $H_{K,p}$. We define the projected kernel section $k_o^{(p)} = \Pi_p k_o$, with $K^{(p)}(o,o') = \langle k_o^{(p)},k_{o'}^{(p)}\rangle_{\HK}$ for $o\in\O$.  Given $O_1,\dots,O_n$, define the coefficient vector $\valpha^{(p)}\in\R^n$ by $\alpha_j^{(p)} =\frac1n\sum_{i=1}^n K^{(p)}(O_i,O_j)$ and the associated flow direction $D(p) := \frac1n\sum_{j=1}^n \alpha_j^{(p)}\,k_{O_j}^{(p)}$. With this notation, the ULFS evolution in \Cref{eq::newode} can be written compactly as the autonomous ODE
\begin{equation}\label{eq::autoode}
\frac{d}{dt}l = F(p),
\qquad
F(p):=p\,D(p),
\end{equation}
which we study as a dynamical system on a suitable function space. The structure of the vector field $F(p)=p\,D(p)$ resembles a replicator-type dynamics; the density evolves multiplicatively in proportion to the direction $D(p)$. Because $D(p)\in\H_{K,p}$ and
therefore satisfies $P_p[D(p)]=0$, the total mass of $p$ is preserved along the flow. Intuitively, the evolution redistributes probability mass across $\O$ according to the RKHS direction $D(p)$ without
altering normalization.

\subsubsection{Hölder structure and notation}\label{sub::holder}

The sample space is $\O = [0,1]^d\times\{0,1\}\times\{0,1\}$, equipped with the product topology. Thus, $\O$ decomposes as the disjoint union of four compact slices
\[
\O_{a,y} = [0,1]^d\times\{a\}\times\{y\}, 
\qquad (a,y)\in\{0,1\}^2,
\]
each homeomorphic to $[0,1]^d$. Consequently, functions on $\O$ may be viewed as collections of four functions, one defined on each slice. Let $O_k=(X_k,A_k,Y_k)$ denote an observation. For $o=(x,A_k,Y_k)\in\O_{A_k,Y_k}$ we define the Euclidean distance along the continuous coordinates by
$\|o-O_k\|_2 := \|x-X_k\|_2$.

Fix $\alpha\in(0,1]$. We define the Banach space $C^{1,\alpha}(\O)
\cong
\bigoplus_{(a,y)\in\{0,1\}^2} C^{1,\alpha}([0,1]^d)$, 
consisting of functions whose restriction to each slice $\O_{a,y}$ lies in $C^{1,\alpha}([0,1]^d)$. The norm on $C^{1,\alpha}(\O)$ is defined by
\[
\|u\|_{C^{1,\alpha}(\O)}
=
\max_{(a,y)\in\{0,1\}^2}
\|u(\cdot,a,y)\|_{C^{1,\alpha}},
\]
where for functions $v$ on $[0,1]^d$,
\[
\|v\|_{C^{1,\alpha}}
=
\|v\|_{L^\infty}
+
\sum_{i=1}^d \|\nabla_i v\|_{L^\infty}
+
\sum_{i=1}^d [\nabla_i v]_\alpha,
\]
and the Hölder seminorm is
\[
[\nabla_i v]_\alpha
=
\sup_{x\neq y}
\frac{|\nabla_i v(x)-\nabla_i v(y)|}{\|x-y\|_2^\alpha}.
\]

Throughout the section we use $\|\cdot\|_{C^{1,\alpha}}$ for the Hölder norm, $\|\cdot\|_\infty$ for the supremum norm, $\|\cdot\|_\alpha$ for the Hölder seminorm, and $\|\cdot\|_2$ for the Euclidean norm, omitting domains when clear from context.

\subsection{Existence and Uniqueness of the ODE Solution}

We show that for any target accuracy $\delta_n>0$ for the empirical RKHS score equation $P_n D(p_t)\le \delta_n$, there exists a finite interval $I=[0,T]$ on which the ODE in \eqref{eq::newode} admits a unique solution. Moreover, the solution reaches the accuracy level $\delta_n$ at some time $t\in[0,T]$. This guarantees that the algorithmic flow is well defined and remains within the class of probability densities. 

The main technical challenge is that the vector field $p\mapsto p\,D(p)$ is data-dependent and acts on the infinite-dimensional Banach space $C^{1,\alpha}(\O)$. Our strategy is to view \Cref{eq::newode} as an autonomous ODE (as in \Cref{eq::autoode}) on a bounded subset of $C^{1,\alpha}(\O)$ and to verify that the induced vector field is locally Lipschitz on that set. Intuitively, boundedness and smoothness of the kernel ensure that RKHS-based updates cannot produce arbitrarily rough or unstable densities over finite time horizons. Let $p_0$ denote the initial density and set $M_0:=\|p_0\|_{C^{1,\alpha}(\O)}$. We consider the closed set
\[
\mathcal B_M :=
\Bigl\{
p\in C^{1,\alpha}(\O):
p\ge 0,\,
\int_\O p\,d\mu=1,\,
\|p\|_{C^{1,\alpha}(\O)}\le M
\Bigr\},
\]
where the radius $M$ will be chosen large enough to contain the entire solution trajectory up to time $T$. The precise dependence of $M$ and $T$ on $\delta_n$, $p_0$, and the kernel constants is given in the proof (Appendix D, \Cref{append::unique}).

We impose regularity conditions on the kernel in \Cref{ass::kernelbound}. These conditions ensure that the RKHS representers appearing in the update $D(p)$ remain uniformly bounded in $C^{1,\alpha}(\O)$ along the trajectory. These conditions are mild and are satisfied by Gaussian kernels on
compact domains, with constants depending only on the bandwidth
$\sigma$, the Hölder exponent $\alpha$, and $\mathrm{diam}(\O)$.

\begin{assumption}
\label{ass::kernelbound}
There exists finite constants $C, M_K, M_{1,\alpha}$, such that
\begin{align}
C &:= \sup_o K(o,o) <\infty, \label{eq:C_def}\\
M_K &:= \sup_{y\in\O}\|K(\cdot,y)\|_{C^{1,\alpha}} <\infty, \label{eq:MK_def}\\
M_{1,\alpha} &:= \sup_{p\in \mathcal{B}_M}\sup_{1\le j\le n}\|k^{(p)}_{O_j}\|_{C^{1,\alpha}} <\infty. \label{eq:M1a_def}
\end{align}  
\end{assumption}

%Consequently, the vector field $F(p):=pD(p)$ maps $\mathcal B_M$ into $C^{1,\alpha}(\O)$ and is locally Lipschitz with respect to the $C^{1,\alpha}$ norm. Standard ODE theory in Banach spaces therefore guarantees existence and uniqueness of solutions starting from $p_0$. Finally, the boundedness of the vector field implies that the $C^{1,\alpha}$ norm of $p_t$ can grow at most exponentially in time. Choosing $M$ sufficiently large ensures that the entire trajectory remains inside $\mathcal B_M$ for $t\in[0,T]$. Under these conditions, the solution exists uniquely on $[0,T]$ and reaches the accuracy level $P_nD(p_t)\le\delta_n$ at some time $t\in[0,T]$.

The following theorem establishes existence, uniqueness, and stability of the ULFS--KDPE flow. In particular, \Cref{t::picard} shows that the density valued ODE in \Cref{eq::newode} has a unique solution starting at $p_0$ and that, over a finite time interval $I = [0,T]$, the solution remains a valid probability density with controlled $C^{1,\alpha}$ norm. The proof is allocated to Appendix D, \Cref{append::unique}.

\begin{theorem}[Existence and uniqueness of the ODE solution]\label{t::picard}
Let $p_0\in C^{1,\alpha}(\O)$ be the initial density. Assume \Cref{ass::kernelbound} holds and let $p_0\in\mathcal B_M\subset C^{1,\alpha}(\O)$. Let $F:\mathcal B_M\to C^{1,\alpha}(\O)$ be defined by $F(p):=p\,D(p)$. Then the density valued ODE in \eqref{eq::newode}, with $p_{t=0}=p_0$,
\begin{equation*}
\frac{d}{dt}p_t = F(p_t),
\end{equation*}
admits a unique solution $t\mapsto p_t\in C^1([0,T];C^{1,\alpha}(\O))$. For all $t\in[0,T]$, $p_t\in\mathcal B_M$.
\end{theorem}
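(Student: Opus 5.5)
The plan is to apply the Picard--Lindelöf theorem (\Cref{thm::picard}) to $F(p)=p\,D(p)$, viewed as a map on an open neighbourhood in $C^{1,\alpha}(\O)$. A priori bounds will then extend the local solution to all of $[0,T]$ while keeping it inside $\mathcal B_M$.

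\textbf{Regularity of $F$.} First make the dependence of $D(p)$ on $p$ explicit. By \Cref{prop::projection_kernel},
\[
k^{(p)}_{O_j}=k_{O_j}-\frac{m_{P_p}(O_j)}{\|m_{P_p}\|_{\HK}^2}\,m_{P_p},
\]
where $m_{P_p}(\cdot)=\int K(\cdot,u)p(u)\,d\mu(u)$ and $\|m_{P_p}\|_{\HK}^2=\iint K(u,v)p(u)p(v)\,d\mu\,d\mu$. Both are polynomial (linear and quadratic) in $p$. Hence:
\begin{itemize}[label=$\cdot$]
\item $\|m_{P_p}-m_{P_q}\|_{C^{1,\alpha}}\le M_K\|p-q\|_{L^1(\mu)}\le 2^2M_K\|p-q\|_\infty$ (four slices of unit volume), using \eqref{eq:MK_def}.
\item $|\,\|m_{P_p}\|^2-\|m_{P_q}\|^2|\le C'\|p-q\|_\infty$.
\end{itemize}
For the quotient to be Lipschitz, $\|m_{P_p}\|_{\HK}^2$ must be bounded away from zero. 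On a bounded set of densities this holds: $K$ is Gaussian on a bounded domain, so $K(u,v)\ge \kappa:=\exp(-\mathrm{diam}(\O)^2/2\sigma^2)>0$, which gives $\|m_{P_p}\|^2\ge\kappa$ whenever $\int p=1$ and $p\ge0$. On a small open neighbourhood of such densities, $\|m\|^2\ge\kappa/2$. It follows that $p\mapsto k^{(p)}_{O_j}$ is locally Lipschitz into $C^{1,\alpha}$. Then $\alpha^{(p)}_j$, a finite average of evaluations of these functions, is Lipschitz and bounded by $C$, and $D(p)$ is Lipschitz into $C^{1,\alpha}$ with $\|D(p)\|_{C^{1,\alpha}}\le C M_{1,\alpha}$. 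Finally, $C^{1,\alpha}$ is a Banach algebra, $\|uv\|_{C^{1,\alpha}}\le c\|u\|_{C^{1,\alpha}}\|v\|_{C^{1,\alpha}}$, so $F(p)-F(q)=(p-q)D(p)+q(D(p)-D(q))$ is locally Lipschitz. Since these formulas make sense for signed $p$ near $\mathcal B_M$, $F$ extends to an open set, and \Cref{thm::picard} yields a unique local $C^1$ solution.

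\textbf{Invariance of $\mathcal B_M$.} Three properties must be preserved along the flow.
\begin{itemize}[label=$\cdot$]
\item \emph{Mass.} $\frac{d}{dt}\int p_t\,d\mu=P_{p_t}[D(p_t)]=0$, since $D(p_t)\in\H_{K,p_t}$.
\item \emph{Positivity.} By uniqueness, $p_t(o)=p_0(o)\exp\bigl(\int_0^tD(p_s)(o)\,ds\bigr)$. This is obtained by solving the scalar linear ODE pointwise with the already-constructed coefficient $D(p_s)$, so $p_t\ge0$, and $p_t>0$ wherever $p_0>0$.
\item \emph{Norm.} Gronwall applied to $\|p_t\|_{C^{1,\alpha}}\le M_0+\int_0^t c\,CM_{1,\alpha}\|p_s\|_{C^{1,\alpha}}\,ds$ gives
\[
\|p_t\|_{C^{1,\alpha}}\le M_0\,e^{cCM_{1,\alpha}t}.
\]
\end{itemize}
Choosing $M\ge M_0e^{cCM_{1,\alpha}T}$ keeps the trajectory in $\mathcal B_M$ on $[0,T]$. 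The constant $M_{1,\alpha}$ depends on $M$; I expect this circularity is harmless. The lower bound on $\|m\|^2$ is $M$-free, so $M_{1,\alpha}\le M_K+M_K^2/\kappa$ uniformly in $M$.

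\textbf{Continuation.} The local Lipschitz constant and the existence time from \Cref{thm::picard} depend only on $M$. The standard continuation argument therefore extends the solution to all of $[0,T]$, with uniqueness following from Gronwall on differences.

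The main obstacle is the Lipschitz estimate for the projected kernel sections: it requires the uniform lower bound on $\|m_{P_p}\|_{\HK}^2$ and the identification of a genuinely open domain on which to apply \Cref{thm::picard}.
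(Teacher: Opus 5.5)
Your proposal is correct and follows essentially the same route as the paper: Lipschitz estimates for the projected sections $k^{(p)}_{O_j}$ using a uniform lower bound on $\|m_{P_p}\|_{\HK}^2$ (the paper's $\kappa_0$), the Banach-algebra bound for $F(p)=p\,D(p)$, Picard--Lindel\"of, and then mass conservation, the pointwise exponential formula for positivity, Gr\"onwall, and continuation to $[0,T]$. Two of your refinements tighten points the paper passes over quickly. Your extension of $F$ to an open neighbourhood of signed functions with $\|m_q\|_{\HK}^2\ge\kappa/2$ repairs the paper's step that infers local Lipschitz continuity on the open ball $U=\{\|p\|_{C^{1,\alpha}}<2M\}$ from Lipschitz continuity on the non-open set $\mathcal B_M$, and your $M$-free bound $M_{1,\alpha}\le M_K+M_K^2/\kappa$ removes the circularity of defining $M$ through $M_{1,\alpha}$, which is itself a supremum over $\mathcal B_M$.
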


\subsection{Finite-Time Convergence of the Empirical Score}

The following result establishes a key structural property of the RKHS-restricted flow.  The RKHS update direction $D(p_t)$ induces a Lyapunov structure for the flow. In particular, along any sufficiently regular solution, the empirical log-likelihood $P_n[\log p_t]$ is monotone nondecreasing, and stationarity occurs if and only if the empirical mean embedding vanishes. This result both justifies the choice of $D(p_t)$ and provides a principled stopping criterion, 
linking equilibrium points of the ODE to the solution of empirical estimating equations on $\HKt$. The proof is allocated to Appendix D, \Cref{append::convergence}.

\begin{lemma}[Monotonicity and stationarity of the empirical log-likelihood]
\label{lemma::monotonicity}
Let $t \mapsto p_t$ be a sufficiently regular solution to the differential equation 
\begin{equation*}
\frac{d}{dt}\log p_t(o)=D(p_t)(o).
\end{equation*}
with $D(p_t) := \frac1n \sum_{j=1}^n \alpha_j^{(t)}\kt_{O_j}$. Then the following hold:
\begin{enumerate}
\item The map $t \mapsto P_n[\log p_t]$ is nondecreasing on $I$.
\item For any $t_1 \in I$,
\begin{equation*}
\left.\frac{d}{dt}P_n[\log p_t]\right|_{t=t_1} = 0
\quad\Longleftrightarrow\quad
m_n^{(t_1)} = 0 \ \text{in } \HK^{(t_1)}.
\end{equation*}
\end{enumerate}
\end{lemma}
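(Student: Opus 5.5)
The proof is a direct computation of the time derivative of $P_n[\log p_t]$. Because the ODE has been written in log-density form, we differentiate under the finite empirical sum, which is justified since $P_n$ averages only $n$ point evaluations and $t\mapsto \log p_t(O_i)$ is differentiable by the assumed regularity (e.g.\ the $C^1([0,T];C^{1,\alpha})$ solution from \Cref{t::picard} with $p_t>0$ at the sample points). This gives
\[
\frac{d}{dt}P_n[\log p_t]=P_n[D(p_t)]=\frac1n\sum_{i=1}^n D(p_t)(O_i).
\]
Next we substitute the definition $D(p_t)=\frac1n\sum_j\alpha_j^{(t)}\kt_{O_j}$ and exchange the two finite sums:
\[
P_n[D(p_t)]=\frac1n\sum_{j=1}^n\alpha_j^{(t)}\,P_n[\kt_{O_j}]=\frac1n\sum_{j=1}^n\bigl(\alpha_j^{(t)}\bigr)^2\ge 0,
\]
using \eqref{eq::alpha_def}, i.e.\ $\alpha_j^{(t)}=P_n[\kt_{O_j}]$. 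Since the derivative is nonnegative at every $t\in I$, the mean value theorem gives monotonicity, which is part (1). This also recovers the identity $s_t=\frac1n\sum_j(\alpha_j^{(t)})^2$ used in (SC2). An equivalent RKHS view is $P_n[D(p_t)]=\langle \widehat C_t m_n^{(t)},m_n^{(t)}\rangle_{\HKt}$, which is nonnegative because $\widehat C_t$ is a positive operator.

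For part (2), the computation shows the derivative at $t_1$ vanishes if and only if $\alpha_j^{(t_1)}=0$ for all $j$. By the reproducing property, $\alpha_j^{(t_1)}=m_n^{(t_1)}(O_j)=\langle \kt_{O_j},m_n^{(t_1)}\rangle_{\HK^{(t_1)}}$. The direction $\Leftarrow$ is then immediate: if $m_n^{(t_1)}=0$, all $\alpha_j^{(t_1)}$ vanish.

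For $\Rightarrow$, we use \Cref{prop::empirical_riesz}, which says $m_n^{(t_1)}=\frac1n\sum_i \kt_{O_i}$ lies in the span of the sections $\kt_{O_i}$. Hence
\[
\|m_n^{(t_1)}\|_{\HK^{(t_1)}}^2=\Bigl\langle m_n^{(t_1)},\tfrac1n\textstyle\sum_i \kt_{O_i}\Bigr\rangle=\frac1n\sum_i m_n^{(t_1)}(O_i)=\frac1n\sum_i\alpha_i^{(t_1)},
\]
so vanishing of all $\alpha_i^{(t_1)}$ forces $m_n^{(t_1)}=0$. This is the key step, and the only nontrivial one: $\valpha=0$ is only pointwise information at the sample, but because $m_n^{(t_1)}$ lies in the finite span of the evaluation representers, vanishing on the sample forces vanishing in norm. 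The Gram-matrix form gives the same conclusion: $\|m_n\|^2=\frac1{n^2}\one^{\mathsf T}G\one$ and $\valpha=\frac1n G\one$. The remaining care is only to state the regularity needed to differentiate under $P_n$ and to keep $\log p_t(O_i)$ finite.
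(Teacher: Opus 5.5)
Your proposal is correct and follows essentially the same route as the paper. Like the paper, you differentiate under the finite empirical sum to get $P_n[D(p_t)]=\frac1n\sum_{j}(\alpha_j^{(t)})^2\ge 0$, and then use that $m_n^{(t_1)}\in\operatorname{span}\{k^{(t_1)}_{O_1},\ldots,k^{(t_1)}_{O_n}\}$, so vanishing of every $\alpha_j^{(t_1)}=\langle k^{(t_1)}_{O_j},m_n^{(t_1)}\rangle$ forces $m_n^{(t_1)}=0$; your identity $\|m_n^{(t_1)}\|^2=\frac1n\sum_i\alpha_i^{(t_1)}$ is just an explicit form of the paper's ``orthogonal to the span yet lying in it'' step.
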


\Cref{thm::finitestep} guarantees that the ULFS--KDPE flow reaches the algorithmic stopping criterion in finite time. In particular, the empirical score $P_n D(p_t)$ cannot remain uniformly above the target tolerance $\delta_n$ throughout the interval $[0,T]$. This ensures that the ULFS--KDP
estimator is well defined and that the stopping rule is not merely heuristic. The proof is allocated to Appendix D, \Cref{append::convergence}.

%The key mechanism is a Lyapunov-type argument. Along the ULFS flow, $P_n[\log p_t]$ increases at rate $P_n D(p_t)$. If this rate stayed strictly above $\delta_n$, the empirical log-likelihood would grow linearly in $t$, forcing exponential inflation of the density near at least one observed point. The assumed local mass of the initial density then implies that the total mass of $p_t$ would exceed one at time $T$, contradicting normalization. Hence, the flow must cross the target accuracy level in finite time.

\begin{theorem}
\label{thm::finitestep}
There exists $t \in [0 ,T]$ such that $P_nD(p_t) \leq \delta_n$.
\end{theorem}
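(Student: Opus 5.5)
We argue by contradiction with a Lyapunov argument built on \Cref{lemma::monotonicity}. Suppose $P_n D(p_t)>\delta_n$ for every $t\in[0,T]$, where $p_t$ is the unique solution from \Cref{t::picard}. Since $\frac{d}{dt}\log p_t = D(p_t)$ and the solution is $C^1$ in $C^{1,\alpha}(\O)$, we can differentiate under the finite empirical sum:
\[
\frac{d}{dt}P_n[\log p_t]=P_n[D(p_t)]=\frac1n\sum_{j=1}^n\bigl(\alpha_j^{(t)}\bigr)^2 .
\]
This is the identity behind \Cref{lemma::monotonicity} and SC2. It gives $\frac{d}{dt}P_n[\log p_t]>\delta_n$ on $[0,T]$, and integrating yields
\[
P_n[\log p_T]> P_n[\log p_0]+\delta_n T .
\]
So the empirical log-likelihood grows at least linearly. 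Hence there is an index $i$ with $\log p_T(O_i)\ge \log p_0(O_i)+\delta_n T$, that is, $p_T(O_i)\ge p_0(O_i)e^{\delta_n T}$.

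Next we turn this pointwise growth into a violation of either normalization or the norm bound. By \Cref{t::picard}, $p_T\in\mathcal B_M$, so $\|p_T\|_\infty\le\|p_T\|_{C^{1,\alpha}}\le M$. We need $p_0(O_i)\ge c_0>0$ at every observation, which holds when the initial fit is positive at the data; we take this from the local-mass condition on $p_0$ used in the appendix. Then
\[
c_0e^{\delta_n T}\le p_T(O_i)\le M .
\]
This fails as soon as $T>\delta_n^{-1}\log(M/c_0)$.

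To avoid circularity, since $M$ is tied to $T$ in \Cref{t::picard}, we can use the normalization route instead. The gradient of $p_T$ on the slice containing $O_i$ is bounded by $M$. So on a Euclidean ball of radius $r\asymp p_T(O_i)/(2M)$ around $X_i$, intersected with $[0,1]^d$, we have $p_T\ge p_T(O_i)/2$. Integrating gives
\[
\int p_T\,d\mu\gtrsim p_T(O_i)^{d+1}M^{-d},
\]
which exceeds $1$ once $p_T(O_i)$ is large. This contradicts $\int p_T\,d\mu=1$, and normalization is preserved along the flow because $P_t[D(p_t)]=0$.

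The main obstacle is the order of quantifiers between $T$ and $M$. \Cref{t::picard} keeps the trajectory in $\mathcal B_M$ only for an $M$ that may grow with $T$, because Gronwall-type bounds give $\|p_t\|_{C^{1,\alpha}}\le M_0e^{Ct}$ with $C$ depending on $M_{1,\alpha}$ and $n$. We therefore need the linear-in-$T$ exponent $\delta_n T$ to dominate $\log M(T)$. My first attempt is to bound $\|D(p)\|_{C^{1,\alpha}}$ uniformly. Since $|\alpha_j^{(p)}|\le 2C$ and $\|k^{(p)}_{O_j}\|_{C^{1,\alpha}}\le M_{1,\alpha}$, we get $\|D(p)\|_{C^{1,\alpha}}\le 2CM_{1,\alpha}=:L$. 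Because $\log p_t=\log p_0+\int_0^t D(p_s)\,ds$, the log-density then has $C^{1,\alpha}$ growth at most $Lt$. With the specific choice of $T$ and $M$ made in Appendix D (for example $T$ proportional to $\delta_n^{-1}\log(\cdot)$ with $M$ chosen accordingly), the strict inequality $\delta_n T>\log(M/c_0)$ can be arranged. The contradiction then gives some $t\in[0,T]$ with $P_nD(p_t)\le\delta_n$.
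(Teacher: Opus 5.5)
There is a genuine gap. Your first step matches the paper: under the contradiction hypothesis, $\int_0^T P_nD(p_s)\,ds>\delta_nT$, so some index $k$ has $\int_0^T D(p_s)(O_k)\,ds>\delta_nT$. The problem is how you turn this into a contradiction. Both of your routes compare the growth $e^{\delta_nT}$ at $O_k$ with the global Gr\"onwall radius $M=M_0\exp(C_{\mathrm{alg}}CM_{1,\alpha}T)$, and that comparison can never close. Under your own hypothesis, $\delta_n<\frac1n\sum_j(\alpha_j^{(t)})^2\le CM_{1,\alpha}$, because $|\alpha_j^{(t)}|\le C$ and $|\alpha_j^{(t)}|\le\|k^{(t)}_{O_j}\|_\infty\le M_{1,\alpha}$. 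Also $c_0\le p_0(O_k)\le M_0$ and $C_{\mathrm{alg}}\ge1$. Hence
\[
\log(M/c_0)=\log(M_0/c_0)+C_{\mathrm{alg}}CM_{1,\alpha}T>\delta_nT\quad\text{for every }T>0,
\]
so the inequality you say ``can be arranged'' is impossible. The normalization route fails for the same reason. Your lower bound satisfies $p_T(O_k)^{d+1}M^{-d}\ge c_0^{d+1}M_0^{-d}\exp\{((d+1)\delta_n-dC_{\mathrm{alg}}CM_{1,\alpha})T\}$. Since $C_{\mathrm{alg}}=1+2d^{(1-\alpha)/2}\ge3$ gives $dC_{\mathrm{alg}}\ge d+1$, the exponent is negative and the bound decays in $T$. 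Any argument that goes through $\|p_T\|_\infty$ or $\|\nabla p_T\|_\infty$ loses, because those bounds grow at a rate that does not shrink with $\delta_n$.

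The missing idea is to never compare with $M$ at all. In the paper, $M$ only keeps $p_s\in\mathcal B_M$, so that the $s$-uniform spatial bound $\|\nabla_xD(p_s)\|_2\le CM_{1,\alpha}$ of \Cref{lemma:Ecnorm} is available. That bound transports the time-integrated lower bound from $O_k$ to nearby points $o$ on the same slice:
\[
\int_0^TD(p_s)(o)\,ds\ge\int_0^TD(p_s)(O_k)\,ds-CM_{1,\alpha}T\|o-O_k\|_2 .
\]
The gain and the loss are both linear in $T$. Take the ball $B(O_k,w(\delta_n))$, whose radius $w=\delta_n/(2CM_{1,\alpha})$ does not depend on $T$. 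On it, the representation $p_T=p_0\exp(\int_0^TD(p_s)\,ds)$ gives $p_T\ge p_0e^{\delta_nT/2}$. The $p_0$-mass of that ball is at least $1/\mathfrak J(\delta_n)$. With $T=\frac{2}{\delta_n}\log(2\mathfrak J(\delta_n))$, integrating gives $\int p_T\,d\mu$ at least $2$. This contradicts the mass conservation in \Cref{t::picard}.

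Your remark that $\log p_t-\log p_0$ has $C^{1,\alpha}$ norm at most $Lt$ is exactly the right ingredient. It just has to be used locally, comparing $p_T$ with $p_0$ multiplicatively on a small ball, rather than fed into a sup-norm bound. This fixes two further issues. First, the route needs only positive $p_0$-mass near each $O_k$, which is what the local-mass condition supplies; that condition does not give $p_0(O_k)\ge c_0$. Second, your quantifier worry disappears. Here $w$ and $T$ depend only on $\delta_n$, $p_0$ and $M_{1,\alpha}$, and $M_{1,\alpha}$ can be bounded independently of $M$. For example, $\|k^{(p)}_{O_j}\|_{C^{1,\alpha}}\le M_K(1+C/\kappa_0)$, with $\kappa_0\ge\inf K>0$ for the Gaussian kernel.
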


\subsection{Asymptotic Linearity and Efficiency}

Finally, we establish asymptotic linearity and efficiency of the ULFS--KDPE estimator. The central requirement is that the RKHS is rich enough to approximate the canonical gradient while remaining well behaved empirically. Assumption~\ref{ass::universal} formalizes this condition. Under \Cref{ass::universal}, Theorem~\ref{thm::efficiency} shows that solving the empirical score equation yields an asymptotically linear estimator with influence function $\phi^*_{P^*}$, and hence semiparametric efficiency. The proof is allocated to Appendix D, \Cref{append::efficiency}.

\begin{assumption}
\label{ass::universal}
There exists an event $\Omega^{'}$ with $P^*(\Omega^{'})=1$ such that for every $\omega\in\Omega^{'}$ there exists a sequence
$\{h_j(\omega)\}_{j\ge1}\subset \mathcal H_K^{(t_n(\omega))}(\omega)$ satisfying:
\begin{enumerate}
\item[(i)]
%$L^2(P_{t_n})$ norm
$\|h_j(\omega)-\phi^*_{P_{t_n(\omega)}}(\omega)\|_{L^2(P^*)}\to 0$ as $j\to\infty$;
\item[(ii)]
$\sup_{j\ge1}\|h_j(\omega)\|_{\mathcal H_K^{(t_n(\omega))}}<\infty$;
\item[(iii)]
there exists $j_0(\omega)<\infty$ such that the class
$\{h_j(\omega): w\in\Omega^{'}, j\ge j_0(\omega)\}$ is $P^*$-Donsker.
\end{enumerate}
\end{assumption}

We pick $\delta_n=o(n^{-2})$, and let $t_n\in [0,T]$ such that $P_nD(p_{t_n})\leq \delta_n$. Then, we have the following result.

\begin{theorem}\label{thm::efficiency}
Let $\Psi: \mathcal{M}\to \R$ be a pathwise-differentiable functional of the distribution $P$ with canonical gradient $\phi^*_{P}\in L_0^2(P)$. Let $t_n\in [0,T]$, which satisfies $P_nD(p_{t_n})\leq \delta_n$, and $\hat P_n:=P_{t_n}$. Under Assumptions \ref{ass::emprocess}, \ref{ass::2orderrem}, \ref{ass::kernelbound}, \ref{ass::universal}, $P_n\phi^*_{P_{t_n}}=o_{P^*}(n^{-1/2})$ and the ULFS--KDPE estimator satisfies 
\begin{equation*}
\Psi(\hat P_n)-\Psi(P^*)=P_n\phi^*_{P^*}+o_{P^*}(n^{-1/2}).
\end{equation*}
\end{theorem}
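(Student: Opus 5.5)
The plan is to reduce everything to the von Mises expansion of Proposition~\ref{prop::vmHatP} with $\hat P_n=P_{t_n}$. Term~2 is $o_{P^*}(n^{-1/2})$ by Assumption~\ref{ass::emprocess}, and Term~3 is $o_{P^*}(n^{-1/2})$ by Assumption~\ref{ass::2orderrem}. So the whole content of the theorem is the claim $P_n\phi^*_{P_{t_n}}=o_{P^*}(n^{-1/2})$, i.e.\ that Term~1 vanishes at the required rate. Theorem~\ref{t::picard} (with Assumption~\ref{ass::kernelbound}) and Theorem~\ref{thm::finitestep} guarantee that $t_n$ and $p_{t_n}\in\mathcal B_M$ are well defined, so the objects below make sense.

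\emph{Step 1: from the stopping rule to a bound on the empirical mean embedding.} First compute $P_nD(p_t)$ explicitly. Since $P_n[\kt_{O_j}]=\alpha_j^{(t)}$,
\[
P_nD(p_t)=\frac1n\sum_{j}\bigl(\alpha_j^{(t)}\bigr)^2 .
\]
By the reproducing property, $\alpha_j^{(t)}=m_n^{(t)}(O_j)$. Therefore
\[
\|m_n^{(t)}\|_{\HKt}^2=P_n\bigl[m_n^{(t)}\bigr]=\frac1n\sum_j\alpha_j^{(t)} .
\]
By Cauchy--Schwarz this is at most $\bigl(\frac1n\sum_j(\alpha_j^{(t)})^2\bigr)^{1/2}=\bigl(P_nD(p_t)\bigr)^{1/2}$. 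At $t=t_n$ this gives $\|m_n^{(t_n)}\|^2\le\delta_n^{1/2}=o(n^{-1})$, hence $\|m_n^{(t_n)}\|_{\HK^{(t_n)}}=o(n^{-1/2})$; this is exactly why $\delta_n=o(n^{-2})$ is chosen. Consequently, by Proposition~\ref{prop::empirical_riesz}, every $h\in\HK^{(t_n)}$ satisfies
\[
|P_nh|\le\|h\|_{\HK^{(t_n)}}\,o(n^{-1/2}).
\]

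\emph{Step 2: transfer to the canonical gradient.} On the event $\Omega'$ of Assumption~\ref{ass::universal}, take the approximating sequence $h_j\in\HK^{(t_n)}$ and decompose
\[
P_n\phi^*_{P_{t_n}}=P_nh_j+(P_n-P^*)(\phi^*_{P_{t_n}}-h_j)+P^*(\phi^*_{P_{t_n}}-h_j).
\]
\begin{itemize}
\item The first term is $o(n^{-1/2})$ uniformly in $j$, by Step~1 and the uniform norm bound in (ii).
\item The third term is bounded by $\|h_j-\phi^*_{P_{t_n}}\|_{L^2(P^*)}$, which tends to $0$ as $j\to\infty$ by (i). Hence for each $n$ one can pick $j=j_n$ large enough that it is $o(n^{-1/2})$; this works because $n$ is fixed while $j$ varies.
\item The middle term is an empirical process indexed by differences of elements of the union of the Donsker class in (iii) with the class $\mathcal F$ of Assumption~\ref{ass::emprocess}. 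That union is Donsker, and the $L^2(P^*)$ distance of these differences can be made arbitrarily small. By asymptotic equicontinuity, $\sqrt n(P_n-P^*)(\phi^*_{P_{t_n}}-h_{j_n})\to_{P^*}0$.
\end{itemize}
Combining the three bounds gives Term~1 $=o_{P^*}(n^{-1/2})$, and substituting into \eqref{eq::asyExpansionHatP} yields the stated expansion.

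\emph{Main obstacle.} I expect the difficulty to lie in the diagonal choice $j=j_n$ in the middle term. The functions $h_{j_n}$ and $\phi^*_{P_{t_n}}$ are both data-dependent, so asymptotic equicontinuity must be applied to random indices. This requires showing $\|\phi^*_{P_{t_n}}-h_{j_n}\|_{L^2(P^*)}\to_{P^*}0$ jointly with membership in a fixed Donsker class with probability tending to one; this is why (iii) asks for a single class over $\omega$ and $j\ge j_0(\omega)$. I would handle it by choosing $j_n(\omega)\ge j_0(\omega)$ measurably so that the $L^2(P^*)$ gap is at most $1/n$, and then invoking the standard lemma, as in the TMLE literature, that $(P_n-P^*)f_n=o_{P^*}(n^{-1/2})$ whenever $f_n$ lies in a Donsker class with $\|f_n\|_{L^2(P^*)}\to_{P^*}0$.
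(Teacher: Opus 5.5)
Your proposal is correct and follows the paper's proof essentially step for step. It uses the same von Mises reduction to Term~1, the same bound $\|m_n^{(t_n)}\|^2_{\HK^{(t_n)}}=\frac1n\sum_j\alpha_j^{(t_n)}\le\bigl(P_nD(p_{t_n})\bigr)^{1/2}\le\delta_n^{1/2}$ (the paper writes this as $\|m_n^{(t_n)}\|^4\le P_nD(p_{t_n})$), and the same three-term split of $P_n\phi^*_{P_{t_n}}$ around a diagonally chosen $h_{j_n}$. Your one refinement is to pair the class from Assumption~\ref{ass::universal}(iii) with the class $\mathcal F$ of Assumption~\ref{ass::emprocess}, so that $\phi^*_{P_{t_n}}-h_{j_n}$ lies in a fixed Donsker class with probability tending to one; this fills in a step that the paper's asymptotic-equicontinuity argument leaves implicit.
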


\section{Simulation Results}\label{sec::Simulation}

We conduct a simulation study to illustrate implementation of the ULFS–KDPE estimator, verify the theoretical results, and assess finite-sample performance across two data-generating processes (DGPs). We evaluate bias, variance, stability of the debiasing procedure, and compare five stopping criteria described in \Cref{subsubsec::stopping}. All experiments use 500 Monte Carlo replications with sample size $n=300$, a Gaussian mean-zero kernel, step size $\Delta=0.01$, and a maximum of 100 iterations. Density stabilization is used for estimation with lower bound $c=10^{-3}$ and tolerance $\lambda=10^{-8}$.

\subsection{Data Generating Processes and Target Parameters}\label{sub:DGP}

\subsubsection{DGP 1: Observational Study with Binary Outcome}

We define $\mathcal O = \mathcal{X}\times\mathcal{A}\times\mathcal{Y}$ with baseline covariates $X\in \mathcal X \equiv [0,1]$, binary treatment $A\in\mathcal A \equiv \{0,1\}$, and a binary outcome $Y \in \mathcal Y \equiv\{0,1\}$. The true DG is given below:
\begin{align*}
    X\sim \text{Unif}(0,1), 
    \quad A|X \sim \text{Bern}(0.5 + \frac{1}{3} \text{sin} (50X/\pi))\\
    Y|A,X \sim \text{Bern}(0.4 + A(X - 0.3)2 + \frac{1}{4} \text{sin}(40X/\pi))
\end{align*}

\subsubsection{DGP 2: Observational Study with a Positivity Issue and Binary Outcome}

We define $\mathcal O = \mathcal{X}\times\mathcal{A}\times\mathcal{Y}$ with baseline covariates $X\in \mathcal X \equiv [-1,1]$, binary treatment $A\in\mathcal A \equiv \{0,1\}$, and a binary outcome $Y \in \mathcal Y \equiv\{0,1\}$. The true DG is given below:
\begin{align*}
    &X\sim 0.9\cdot\text{Unif}(-1,1) + 0.1 \cdot \text{Unif}(-2,2), \\
    &A|X \sim \text{Bern}(\text{expit}(4X)),\\
    &Y|A,X \sim \text{Bern}(\text{expit}(-0.5+A+0.5X)).
\end{align*}
We note that the proposed DGP has a positivity issue. In particular, $X$ close to  $\pm2$ leads to $e^*$ very close to $0$ or $1$.

\subsubsection{Target Parameters}
We focus on several canonical causal parameters. We define the mean potential outcome under treatment level $a \in \{0,1\}$ as
\begin{align*}
\mu_a(P)
:= \E_P\bigl[Y^a\bigr]
&= P_X\!\left[\E_P\bigl[Y \mid A=a,X\bigr]\right]\\
&= \int \bar Q_P(a,x)\,dP_X(x),    
\end{align*}where $\bar Q_P(a,x):= \E_P[Y\mid A=a,X=x]$. We consider the following  parameters: 
\begin{enumerate}
    \item the average treatment effect (ATE), $\psi_{\mathrm{ATE}}(P) := \mu_1(P)-\mu_0(P)$;
    \item  the risk ratio (RR),
$\psi_{\mathrm{RR}}(P) := \mu_1(P)/\mu_0(P)$ and 
\item  the odds ratio (OR), $\psi_{\mathrm{OR}}(P) := \frac{\mu_1(P)/(1-\mu_1(P))}{\mu_0(P)/(1-\mu_0(P))}$.
\end{enumerate}

\subsubsection{Simulation Set-up}

For all DGPs, we initialize the distribution of baseline covariate $X$ as $P_n (X)$, and treat the empirical distribution of $X$ as fixed along the targeting step.  The remaining conditional distributions are estimated using the Super Learner framework \cite{sl2007}, as implemented in the \texttt{sl3} package~\cite{coyle2021sl3} in \textsf{R}. The candidate learner library includes the sample mean, generalized linear models (GLM), random forests~\cite{breiman2001random}, and gradient boosting via XGBoost~\cite{chen2016xgboost}. %{\color{red}{CITE all the learners}}

For both DGPs, we compare the proposed ULFS-KDPE with the following methods: (i) the original iterative KDPE, which relies on the locally least favorable submodel \cite{cho2024kernel}; (ii) targeted maximum likelihood estimation (TMLE) \cite{vanderLaanRubin2006TMLE} and (iii) the one-step TMLE \cite{one_step_tmle}. TMLE and one-step TMLE were implemented using the \texttt{tmle3} package~\cite{coyle2021tmle3} in \textsf{R}. One-step TMLE was included only for the average treatment effect $\psi_{\mathrm{ATE}}$, as the current \texttt{tmle3} implementation does not support one-step estimation of the risk ratio $\psi_{\mathrm{RR}}$ or odds ratio $\psi_{\mathrm{OR}}$. Both TMLE and one-step TMLE explicitly use the EIF of the target parameter and are known to attain the semiparametric efficiency bound under standard regularity conditions.

\subsection{Empirical Performance}

Across all DGPs, the simulations strongly support the theoretical properties of ULFS–KDPE and demonstrate clear practical advantages. In a well-behaved observational setting with a binary outcome (DGP1), ULFS–KDPE exhibits behavior consistent with first-order efficiency, with the empirical distribution of $\psi_{\mathrm{ATE}}(\hat P_{\mathrm{ULFS\text{-}KDPE}})$ closely matching its asymptotic normal limit. Compared with TMLE and one-step TMLE, ULFS–KDPE achieves smaller RMSE across all DGPs in \Cref{table:summary}, indicating a favorable bias–variance tradeoff induced by the regularized flow. Compared with (iterative) KDPE, ULFS–KDPE exhibits substantially lower bias in almost all cases, while maintaining comparable variance. A key advantage is that a single ULFS–KDPE distribution can be used to estimate multiple pathwise differentiable parameters simultaneously. This is especially evident for nonlinear targets such as $\psi_{\mathrm{RR}}$ and $\psi_{\mathrm{OR}}$, where ULFS–KDPE yields lower bias and RMSE than TMLE, despite TMLE requiring separate targeting steps.

The advantages of ULFS–KDPE are particularly evident under positivity violations (DGP2). In this more challenging setting, ULFS–KDPE consistently outperforms TMLE and one-step TMLE across all target parameters, exhibiting markedly lower variance and closer agreement with the limiting distributions. For nonlinear targets such as $\psi_{\mathrm{RR}}$ and $\psi_{\mathrm{OR}}$, ULFS–KDPE achieves the smallest bias and RMSE, highlighting the stabilizing effect of the RKHS-restricted universal least favorable flow and density regularization in settings where EIF-based methods suffer from variance inflation.

In addition to its statistical performance, ULFS–KDPE exhibits markedly improved numerical stability compared to the original (iterative) KDPE. ULFS–KDPE converges within the iteration limit in substantially more simulations, reflecting the robustness of its micro-step updating scheme. This scheme discretizes a globally defined least favorable flow and avoids the overshooting behavior that can hinder convergence of locally defined KDPE updates. As shown in \Cref{table:iteration}, for both DGPs ULFS–KDPE converges in all simulations when the iteration limit is moderately increased to 150.

The behavior of the stopping criteria further reinforces the algorithmic stability of ULFS–KDPE. As shown in \Cref{table:stopsummary}, stopping rules aligned with the geometry of the ULFP flow are the most reliable across all DGPs. The density stabilization rule, in particular, exploits the monotonicity of the empirical log-likelihood and halts the procedure once additional updates no longer yield meaningful reductions in empirical bias. In contrast, stopping rules based on local score conditions or explicit EIF-solving are more sensitive to finite-sample instability, especially for nonlinear targets and under positivity violations. The variance-based stopping rule is conservative and tends to terminate late.

Overall, these results demonstrate that ULFS–KDPE achieves first-order efficiency in regular settings, provides substantial finite-sample variance reduction under challenging positivity regimes, simultaneously debiases multiple target parameters using a single distribution, and offers improved numerical stability when coupled with stopping rules intrinsic to the universal least favorable flow.

\begin{table}[H]
    \caption{Bias, Convergence with 100 Steps, Variance and Root Mean Squared Error (RMSE) for ATE, RR and OR with different estimators (ULFS-KDPE, KDPE, TMLE, One-step TMLE).}
    \label{table:summary}
    \begin{center}
    \begin{threeparttable}
    \resizebox{0.97\columnwidth}{!}{
        \begin{small}
            \resizebox{\columnwidth}{!}{
            \begin{tabular}{rclrrrr}
                \hline
                    &Parameter &Method &\#Cov.  & Bias($\times100$) & Var\tnote{a} & RMSE \\ 
                \hline
                    & &ULFS-KDPE &475 & -0.824 & 0.0561 & 0.0567 \\ 
                DGP1 & $\psi_{ATE}$ &KDPE &343 & -1.438 & 0.0546 & 0.0565 \\
                    & &TMLE &500 & -0.004 & 0.0618 & 0.0618 \\
                    & &One-step TMLE &500 & -0.247 & 0.0605 & 0.0606 \\
                \cmidrule(lr){2-7}
                    & &ULFS-KDPE &475 & -1.082 & 0.1724 & 0.1728 \\ 
                    & $\psi_{RR}$ &KDPE &343 & -3.213 & 0.1584 & 0.1616 \\
                    & &TMLE &500 &1.694 & 0.1919 & 0.1928 \\
                \cmidrule(lr){2-7}
                    & &ULFS-KDPE &475 & -0.499 & 0.3951 & 0.3954 \\ 
                    & $\psi_{OR}$ &KDPE &343 & -5.010 & 0.3683 & 0.3717 \\
                    & &TMLE &500 & 6.193 & 0.4464 & 0.4510 \\
                \hline
                    & &ULFS-KDPE &500 & -0.779 & 0.0772 & 0.0777 \\ 
                DGP2 & $\psi_{ATE}$ &KDPE &389 & 0.034 & 0.0750 & 0.0751 \\
                    & &TMLE &500 & 0.128& 0.1203 & 0.1204 \\
                    & &One-step TMLE &500 & 0.009 & 0.1176 & 0.1177 \\
                \cmidrule(lr){2-7}
                    & &ULFS-KDPE &500 & 0.427 & 0.2777 & 0.2779 \\ 
                    & $\psi_{RR}$ &KDPE &389 & 3.123 & 0.2766 & 0.2784 \\
                    & &TMLE &500 & 8.663 & 0.4599 & 0.4683 \\
                \cmidrule(lr){2-7}
                    & &ULFS-KDPE &500 & 8.729 & 0.9505 & 0.9552 \\ 
                    & $\psi_{OR}$ &KDPE &389 & 17.575 & 0.9621 & 0.9784 \\
                    & &TMLE &500 & 49.752 & 1.6948 & 1.7674 \\
                \hline
            \end{tabular}
            }
        \end{small}
        }
        \begin{tablenotes}[flushleft]
            \scriptsize
            \item[a] Var denotes the Monte Carlo variance of the estimator across $B$ simulated datasets, computed as 
$\frac{1}{B}\sum_{b=1}^B (\hat{\psi}_b - \bar{\psi})^2$, where 
$\bar{\psi} = \frac{1}{B}\sum_{b=1}^B \hat{\psi}_b$.
        \end{tablenotes}
        \end{threeparttable}
    \end{center}
  \vskip -0.1in
\end{table}

\begin{figure}[H]
    \begin{center}
\centerline{\includegraphics[width=\columnwidth]{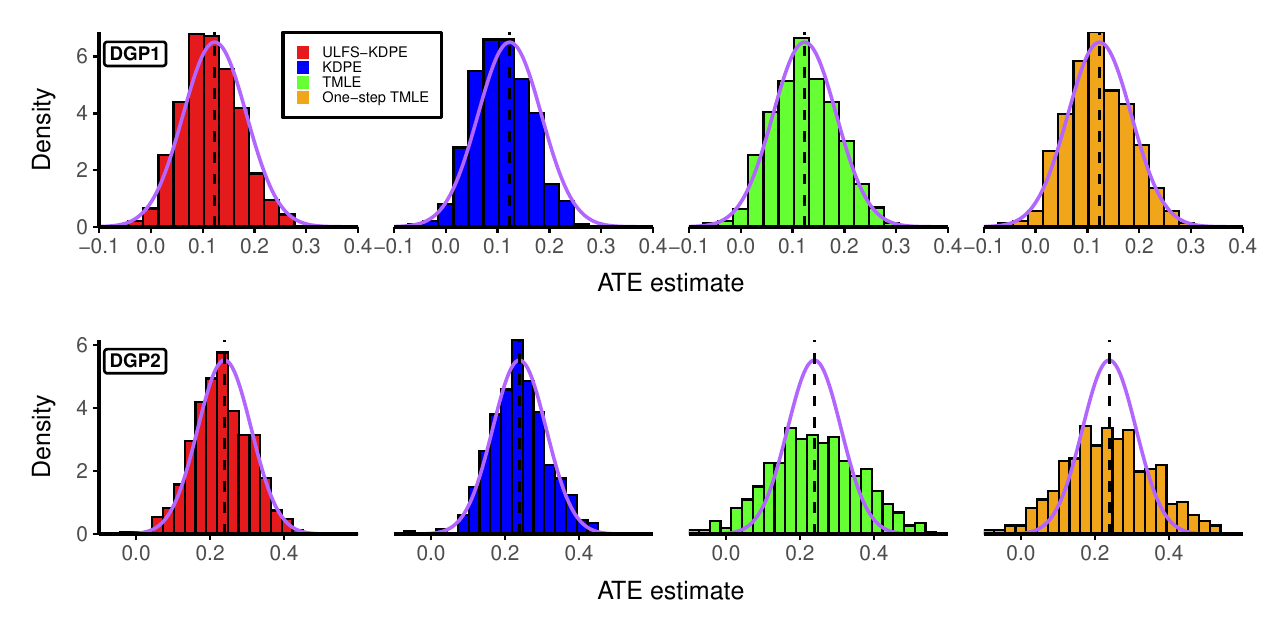}}
        \caption{Finite-sample behavior of the different ATE estimators (ULFS-KDPE, KDPE, TMLE, One-step TMLE). True asymptotic distribution is depicted in purple.}
        \label{fig:ATEbinary}
    \end{center}
\end{figure}

\begin{table}[H]
    \caption{Bias, Convergence with 200 Steps, Var, RMSE for ATE, RR and OR under different stopping criteria for ULFS-KDPE.}
    \label{table:stopsummary}
    \begin{center}
    \begin{threeparttable}
    \resizebox{0.93\columnwidth}{!}{
        \begin{small}
            \begin{tabular}{rclrrrr}
                \hline
                  &Parameter &Stopping Criterion & \#Cov.  & Bias ($\times100$) & Var\tnote{a} & RMSE \\ 
                \hline
                   & &Density stabilization &500 &-0.840 & 0.0553 & 0.0560 \\ 
                DGP1 & $\psi_{ATE}$ &Empirical score stabilization &493 &-0.806 &0.0566 &0.0572 \\
                   & &Vanishing update direction &500 &-1.357 &0.0547 &0.0564 \\
                   & &Variance-dominated updates &500 &-1.573 &0.0550 &0.0573 \\
                   & &EIF approximately solved &447 &-1.156 &0.0561 &0.0573 \\
                \cmidrule(lr){2-7}
                   & &Density stabilization &500 &-1.142 &0.1700 &0.1705 \\ 
                   & $\psi_{RR}$ &Empirical score stabilization &493 &-0.985 &0.1738 &0.1742 \\
                   & &Vanishing update direction &500 &-2.690 &0.1647 &0.1670 \\
                   & &Variance-dominated updates &500 &-3.274 & 0.1651 & 0.1684 \\
                   & &EIF approximately solved &447 &-2.070 &0.1698 &0.1712 \\
                \cmidrule(lr){2-7}
                   & &Density stabilization &500 &-0.727 &0.3893 &0.3896 \\ 
                   & $\psi_{OR}$ &Empirical score stabilization &493 &-0.274 &0.4002 &0.4005 \\
                   & &Vanishing update direction &500 &-4.301 &0.3759 &0.3786 \\
                   & &Variance-dominated updates &500 &-5.648 & 0.3758 & 0.3803 \\
                   & &EIF approximately solved &447 &-2.750 &0.3897 &0.3909 \\
                \hline
                   & &Density stabilization &500 &-0.779 & 0.0772 & 0.0777 \\ 
                DGP2 & $\psi_{ATE}$ &Empirical score stabilization &499 &-0.511 &0.0770 &0.0772 \\
                   & &Vanishing update direction &497 &-0.981 &0.0779 &0.0785  \\
                   & &Variance-dominated updates &500 &-1.240 & 0.0780 & 0.0790 \\
                   & &EIF approximately solved &306 &-1.269 &0.0786 &0.0797 \\
                \cmidrule(lr){2-7}
                   & &Density stabilization &500 &0.427 & 0.2777 & 0.2779 \\ 
                   & $\psi_{RR}$ &Empirical score stabilization &499  &1.374 &0.2788 &0.2794 \\
                   & &Vanishing update direction &497 &-0.238 &0.2784 &0.2786\\
                   & &Variance-dominated updates &500 &-1.114 & 0.2779 & 0.2783 \\
                   & &EIF approximately solved &306 &-1.069 &0.2797 &0.2799 \\
                \cmidrule(lr){2-7}
                   & &Density stabilization &500 & 8.729 & 0.9505 & 0.9552 \\ 
                   & $\psi_{OR}$ &Empirical score stabilization &499 &11.805 &0.9572 &0.9650 \\
                   & &Vanishing update direction &497  &6.629 &0.9504 &0.9533 \\
                   & &Variance-dominated updates &500 & 3.712 & 0.9441 & 0.9455 \\
                   & &EIF approximately solved &306  &3.620 &0.9419 &0.9427 \\
                \hline
            \end{tabular}
        \end{small}
        }
        \begin{tablenotes}[flushleft]
            \scriptsize
            \item[a] $\frac{1}{B}\sum_{b=1}^B (\hat{\psi}_b - \bar{\psi})^2$, where 
$\bar{\psi} = \frac{1}{B}\sum_{b=1}^B \hat{\psi}_b$.
        \end{tablenotes}
    \end{threeparttable}
    \end{center}
  \vskip -0.1in
\end{table}

\begin{figure}[H]
    \centering
 \includegraphics[width=0.95\columnwidth]{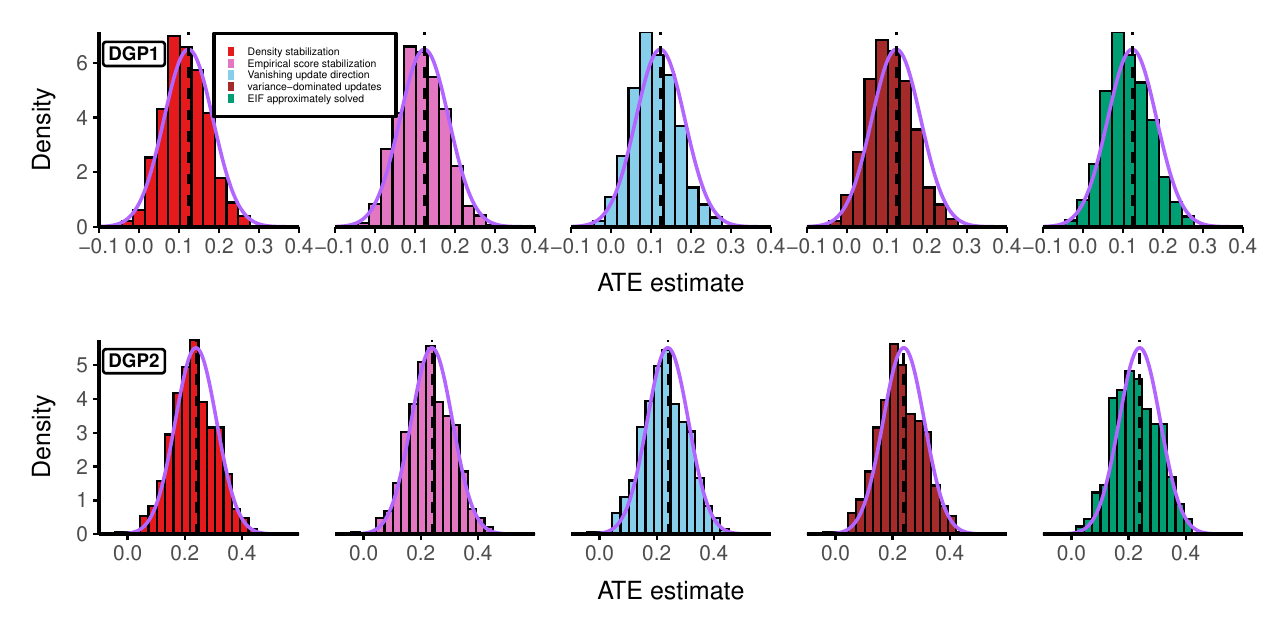}
    \caption{ULFS-KDPE under different stopping criteria. True asymptotic distribution is depicted in purple.}
    \label{fig:stop}
\end{figure}

%\begin{figure}[H]
%    \centering \includegraphics[width=\columnwidth]{RROR.pdf}
%    \caption{Simulated distribution of $\hat{\psi}_{RR}$ and $\hat{\psi}_{OR}$.  First row corresponds to DGP1, and second row to DGP2. Red line denotes true value of the target parameter.}
%    \label{fig:RROR}
%\end{figure}

\begin{table}[H]
    \caption{Convergence and RMSE for ATE, RR and OR with ULFS-KDPE under different iteration limits (\# of steps allowed).}
    \label{table:iteration}
    \begin{center}
        \begin{small}
            \begin{tabular}{rrlrrrr}
              \hline
              &Iteration Limit &Stopping Criterion &\#Cov. & $\psi_{\text{ATE}}$ & $\psi_{\text{RR}}$ & $\psi_{\text{OR}}$ \\ 
              \hline
               & &Density plateau &475 & 0.0567 & 0.1728 & 0.3975 \\ 
            DGP1 &100  &Score plateau &404 & 0.0573 & 0.1676 & 0.3792 \\ 
               & &Vanishing Direction &498 & 0.0563 & 0.1669 & 0.3784 \\ 
               & &Variance dominate &500 & 0.0567 & 0.1684 & 0.3803 \\ 
               & &EIF sovled &407 & 0.0547 & 0.1721 & 0.3943 \\ 
               \cmidrule(lr){2-7}
               & &Density plateau &500 & 0.0560 & 0.1702 & 0.3896 \\ 
               &150  &Score plateau &472 & 0.0569 & 0.1691 & 0.3900 \\ 
               & &Vanishing Direction &500 & 0.0564 & 0.1650 & 0.3786 \\ 
               & &Variance dominate &500 & 0.0573 & 0.1654 & 0.3803 \\ 
               & &EIF sovled &435 & 0.0576 & 0.1711 & 0.3930 \\ 
               \cmidrule(lr){2-7}
               & &Density plateau &500 & 0.0560 & 0.1702 & 0.3896 \\ 
               &200  &Score plateau &493 & 0.0572 & 0.1741 & 0.4005 \\ 
               & &Vanishing Direction &500 & 0.0564 & 0.1650 & 0.3786 \\ 
               & &Variance dominate &500 & 0.0573 & 0.1654 & 0.3803 \\ 
               & &EIF sovled &447 & 0.0573 & 0.1701 & 0.3909 \\ 
               \hline
               & &Density plateau &500 & 0.0777 & 0.2779 & 0.9552 \\ 
            DGP2 &100  &Score plateau &197 & 0.0838 & 0.2942 & 1.0551 \\ 
               & &Vanishing Direction &484 & 0.0786 & 0.2775 & 0.9501 \\ 
               & &Variance dominate &500 & 0.0790 & 0.2783 & 0.9455 \\ 
               & &EIF sovled &251 & 0.0818 & 0.2844 & 0.9553 \\ 
               \cmidrule(lr){2-7}
               & &Density plateau &500 & 0.0777 & 0.2779 & 0.9552 \\ 
               &150  &Score plateau &415 & 0.0782 & 0.2757 & 0.9476 \\ 
               & &Vanishing Direction &492 & 0.0783 & 0.2769 & 0.9458 \\ 
               & &Variance dominate &500 & 0.0790 & 0.2783 & 0.9455 \\ 
               & &EIF sovled &286 & 0.0806 & 0.2820 & 0.9509 \\ 
               \cmidrule(lr){2-7}
               & &Density plateau &500 & 0.0777 & 0.2779 & 0.9552 \\ 
               &200  &Score plateau &499 & 0.0772 & 0.2794 & 0.9650 \\ 
               & &Vanishing Direction &497 & 0.0785 & 0.2786 & 0.9533 \\ 
               & &Variance dominate &500 & 0.0790 & 0.2783 & 0.9455 \\ 
               & &EIF sovled &306 & 0.0797 & 0.2799 & 0.9427 \\ 
               \hline
            \end{tabular}
        \end{small}
    \end{center}
\end{table}

\section{Discussion and Future Work}

This work develops a kernel debiased plug-in estimator based on the universal least favorable submodel (ULFS--KDPE), providing a unified and theoretically grounded framework for efficient semiparametric estimation that is both influence-function-free and computationally tractable. By embedding debiasing in an RKHS  and defining the update through a universal least favorable flow, the proposed method departs from classical locally targeted approaches and instead enforces least favorability globally along a single distributional path.

From a theoretical perspective, the main contribution lies in placing ULFS--KDPE on a rigorous functional-analytic foundation. We formulate the universal least favorable update as a nonlinear ODE on densities and establish existence, uniqueness, stability, and finite-time convergence of its solutions in appropriate Hölder spaces. These results guarantee that the proposed flow is well posed, preserves positivity and normalization, and reaches a point where the empirical RKHS score is sufficiently small in finite time. Importantly, the construction yields a plug-in estimator that is regular, asymptotically linear, and semiparametrically efficient under standard conditions, without requiring explicit derivation of the efficient influence function. Moreover, efficiency is attained simultaneously for all pathwise differentiable parameters whose canonical gradients lie in the $L^2(P_0)$-closure of the RKHS, including multivariate targets.

From a practical standpoint, ULFS--KDPE inherits advantages from both universal least favorable submodels and kernel-based debiasing. The global least favorability of the path avoids convergence pathologies that can arise from iterative local targeting, while the RKHS representation reduces the update to finite-dimensional computations involving kernel evaluations at the observed data points. The Lyapunov structure of the flow provides a geometric interpretation of the update as a stabilized gradient flow that monotonically increases the empirical log-likelihood and approaches stationarity as the empirical score vanishes. Simulation results support these theoretical properties and demonstrate improved finite-sample stability and accuracy relative to locally targeted methods, particularly in challenging regimes such as limited overlap.

Several directions for future research emerge from this work. First, a deeper theoretical analysis of stopping criteria is needed. While the current criteria are motivated by the Lyapunov structure and empirical score equations, formal results characterizing their impact on asymptotic linearity, efficiency, and finite-sample bias would strengthen the inferential guarantees of ULFS--KDPE. In particular, developing data-adaptive stopping rules with explicit rates and inference-valid guarantees is an important next step. Second, further investigation of discretization schemes is warranted. Our implementation relies on an explicit Euler discretization of the ULFS ODE, which is simple and stable but may not be optimal. Understanding the interaction between discretization error, kernel smoothness, and convergence rates remains an open problem.

Extending the framework to higher-order inference is another promising direction. While ULFS--KDPE achieves first-order semiparametric efficiency, the universal RKHS-based score equations suggest the potential to capture higher-order components of the influence function \cite{pimentel2025scorepreservingtargetedmaximumlikelihood}. Developing a systematic theory for second-order or higher-order expansions along the universal least favorable flow could lead to improved inference in moderate samples and provide new insights into the geometry of efficient estimation. Finally, scalability and approximation deserve further attention. Random feature approximations, low-rank kernel methods, or localized kernels may allow ULFS--KDPE to scale to larger datasets while preserving its theoretical properties. Exploring these directions would broaden the practical applicability of the method and further bridge the gap between strong semiparametric theory and modern data-intensive applications.

\section{Acknowledgment}
\noindent
We thank Patrick Lopatto for many comments and discussions on the topic.

\newpage

\section{Appendix A}\label{append::A}

\begin{proposition}[Expectation as an RKHS functional]
Let $P$ be any probability measure on $\O$, and let $\HK$ be the RKHS associated with the Gaussian kernel $K$. Then the kernel mean embedding $m_P:=\int k_o\,dP(o)$ exists in $\HK$. Moreover, for every $f\in\HK$,
\[
P[f]=\langle f,m_P\rangle_{\HK}.
\]
Consequently, the map $f\mapsto P[f]$ is a continuous linear functional on $\HK$ satisfying
\[
|P[f]|\le \|f\|_{\HK}.
\]
\end{proposition}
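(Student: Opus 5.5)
The plan is to realize $m_P$ as a Bochner integral in $\HK$ and then read off the three claims from the reproducing property. The first step is to show that the $\HK$-valued map $o\mapsto k_o$ is Bochner integrable with respect to $P$. For measurability, I would argue that $o\mapsto k_o$ is continuous from $\O$ into $\HK$, because
\[
\|k_o-k_{o'}\|_{\HK}^2=K(o,o)-2K(o,o')+K(o',o')=2-2K(o,o'),
\]
and this tends to $0$ as $o'\to o$ since the Gaussian kernel is continuous. A continuous map from a separable metric space into a Banach space is strongly measurable, and its range is separable. For integrability, I would use $\|k_o\|_{\HK}=K(o,o)^{1/2}=1$, which gives
\[
\int\|k_o\|_{\HK}\,dP(o)=1<\infty.
\]
The Bochner integral $m_P:=\int k_o\,dP(o)\in\HK$ therefore exists, and it satisfies $\|m_P\|_{\HK}\le 1$.

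The second step is the representation $P[f]=\langle f,m_P\rangle_{\HK}$. Fix $f\in\HK$. The map $g\mapsto\langle f,g\rangle_{\HK}$ is a bounded linear functional on $\HK$, and bounded linear functionals commute with Bochner integrals. Combining this with the reproducing property from Proposition~\ref{prop::MooreAronszajn} gives
\[
\langle f,m_P\rangle_{\HK}=\int\langle f,k_o\rangle_{\HK}\,dP(o)=\int f(o)\,dP(o)=P[f].
\]
I also need $P[f]$ to be well defined, that is, $f$ measurable and $P$-integrable. This follows from two facts. First, $f$ is continuous, since $|f(o)-f(o')|\le\|f\|_{\HK}\|k_o-k_{o'}\|_{\HK}$. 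Second, $f$ is bounded, by the bound $\|f\|_\infty\le\|f\|_{\HK}\sup_o K(o,o)^{1/2}=\|f\|_{\HK}$ recorded in Section~\ref{sub::RKHS}.

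The final step is the continuity claim, which follows directly. The map $f\mapsto P[f]$ is linear, and Cauchy--Schwarz gives
\[
|P[f]|\le\|f\|_{\HK}\|m_P\|_{\HK}\le\|f\|_{\HK}.
\]
As a cross-check, the same bound also follows directly from $|P[f]|\le\|f\|_\infty\le\|f\|_{\HK}$.

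The only delicate point is the measurability and Bochner-integrability step. If I want to avoid vector-valued integration, I would use a Riesz-representation route instead. The functional $f\mapsto\int f\,dP$ is bounded by the sup-norm argument above, so it has a Riesz representer $m\in\HK$. Evaluating at a point gives $m(v)=\langle m,k_v\rangle_{\HK}=P[k_v]=\int K(o,v)\,dP(o)$, so $m$ agrees pointwise with $\int k_o\,dP(o)$. This identifies $m$ with $m_P$ and proves all three claims at once.
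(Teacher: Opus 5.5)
Your proposal is correct and follows the paper's argument essentially step for step. Both obtain Bochner integrability from $\|k_o\|_{\HK}\le 1$, commute the bounded functional $g\mapsto\langle f,g\rangle_{\HK}$ with the integral and apply the reproducing property, then finish with Cauchy--Schwarz and $\|m_P\|_{\HK}\le 1$. Your extra checks (strong measurability of $o\mapsto k_o$ via continuity, and continuity and boundedness of $f$ so that $P[f]$ is well defined) supply details the paper leaves implicit.
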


\begin{proof}
Since $K(o,o)\le 1$ for all $o\in\O$, we have
\[
\|k_o\|_{\HK}^2=\langle k_o,k_o\rangle_{\HK}=K(o,o)\le 1,
\]
and therefore $\|k_o\|_{\HK}\le 1$ for all $o\in\O$. It follows that $\int \|k_o\|_{\HK}\,dP(o)\le 1<\infty$. Hence the $\HK$-valued Bochner integral $m_P:=\int k_o\,dP(o)$ is well defined.

Let $f\in\HK$. By the reproducing property,
\[
f(o)=\langle f,k_o\rangle_{\HK}
\qquad \text{for all }o\in\O.
\]
Therefore,
\[
P[f]=\int f(o)\,dP(o)=\int \langle f,k_o\rangle_{\HK}\,dP(o).
\]
Since the map $g\mapsto \langle f,g\rangle_{\HK}$ is a bounded linear functional on $\HK$, it commutes with the Bochner integral, yielding
\[
\int \langle f,k_o\rangle_{\HK}\,dP(o)
=
\left\langle f,\int k_o\,dP(o)\right\rangle_{\HK}
=
\langle f,m_P\rangle_{\HK}.
\]
Thus
\[
P[f]=\langle f,m_P\rangle_{\HK}.
\]
Finally, by the Cauchy--Schwarz inequality,
\[
|P[f]|=|\langle f,m_P\rangle_{\HK}|
\le \|f\|_{\HK}\,\|m_P\|_{\HK}.
\]
Moreover,
\[
\|m_P\|_{\HK}
=
\left\|\int k_o\,dP(o)\right\|_{\HK}
\le
\int \|k_o\|_{\HK}\,dP(o)
\le 1.
\]
Hence
\[
|P[f]|\le \|f\|_{\HK}.
\]
This proves that $f\mapsto P[f]$ is a continuous linear functional on $\HK$.
\end{proof}

\begin{proposition}[Mean-zero RKHS as a closed subspace]
For each $t\in I$, the space $\HKt$ is a closed linear subspace of $\HK$. Moreover,
\[
\HKt
=
\{f\in\HK:\langle f,m_{\Pt}\rangle_{\HK}=0\}
=
(\operatorname{span}\{m_{\Pt}\})^\perp.
\]
In particular, $\HKt$ is a Hilbert space under the inner product inherited from $\HK$.
\end{proposition}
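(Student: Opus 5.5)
The plan is to deduce everything from the preceding result, Proposition (Expectation as an RKHS functional), applied with $P=\Pt$. That proposition says the kernel mean embedding $m_{\Pt}=\int k_o\,d\Pt(o)$ exists in $\HK$. It also says that $\Pt[f]=\langle f,m_{\Pt}\rangle_{\HK}$ for every $f\in\HK$. Substituting this identity into the definition $\HKt=\{f\in\HK:\Pt[f]=0\}$ immediately gives the first claimed equality,
\[
\HKt=\{f\in\HK:\langle f,m_{\Pt}\rangle_{\HK}=0\}.
\]

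For the second equality, I would note that $f$ is orthogonal to every element of $\operatorname{span}\{m_{\Pt}\}=\{c\,m_{\Pt}:c\in\R\}$ if and only if $\langle f,m_{\Pt}\rangle_{\HK}=0$. This holds by bilinearity of the inner product. Hence the set above coincides with $(\operatorname{span}\{m_{\Pt}\})^\perp$. The degenerate case $m_{\Pt}=0$ needs no separate treatment: both sides then equal all of $\HK$.

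Closedness and linearity follow from writing $\HKt$ as the kernel of the map $\ell(f):=\langle f,m_{\Pt}\rangle_{\HK}$. This map is linear and continuous, since $|\ell(f)|\le\|f\|_{\HK}\|m_{\Pt}\|_{\HK}\le\|f\|_{\HK}$ by Cauchy--Schwarz and the bound $\|m_{\Pt}\|_{\HK}\le 1$ from the earlier proposition. The kernel of a continuous linear functional is a closed linear subspace, being the preimage of the closed set $\{0\}$. Equivalently, any orthogonal complement is closed.

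Finally, a closed subspace of a complete inner product space is complete, so $\HKt$ with the inherited inner product is a Hilbert space. I do not expect a real obstacle here. The only point requiring care is the existence of $m_{\Pt}$ in $\HK$, that is, the Bochner integrability that justifies the inner-product representation, and this is already supplied by the boundedness $K(o,o)\le1$ through the previous proposition.
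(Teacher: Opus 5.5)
Your proposal is correct and follows essentially the same route as the paper: invoke the expectation-as-inner-product identity $\Pt[f]=\langle f,m_{\Pt}\rangle_{\HK}$, then identify $\HKt$ as the null space of a continuous linear functional, which is closed. Your explicit checks of the span/orthogonal-complement equality, the degenerate case $m_{\Pt}=0$, and completeness of a closed subspace only make explicit what the paper leaves implicit.
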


\begin{proof}
By Proposition~\ref{prop::RKHS_expectation}, for every $f\in\HK$ we have
\[
\Pt[f]=\langle f,m_{\Pt}\rangle_{\HK}.
\]
Hence the constraint $\Pt[f]=0$ is equivalent to orthogonality to $m_{\Pt}$. Since the map $f\mapsto \langle f,m_{\Pt}\rangle_{\HK}$ is a continuous linear functional on $\HK$, its null space
\[
\{f\in\HK:\langle f,m_{\Pt}\rangle_{\HK}=0\}
\]
is a closed linear subspace of $\HK$. This proves the claim.
\end{proof}

\begin{proposition}[Orthogonal projection onto the mean-zero RKHS]
If $m_{\Pt}\neq 0$, the orthogonal projection $\Pi_t:\HK\to\HKt$ is given by
\[
\Pi_t f
=
f-\frac{\langle f,m_{\Pt}\rangle_{\HK}}{\|m_{\Pt}\|_{\HK}^2}\,m_{\Pt}
=
f-\frac{\Pt[f]}{\|m_{\Pt}\|_{\HK}^2}\,m_{\Pt},
\qquad f\in\HK.
\]
\end{proposition}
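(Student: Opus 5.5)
The plan is to verify directly that the proposed map $\Pi_t f := f - \frac{\langle f,m_{\Pt}\rangle_{\HK}}{\|m_{\Pt}\|_{\HK}^2}\,m_{\Pt}$ has the two properties that characterize the orthogonal projection onto a closed subspace. These properties are that $\Pi_t f\in\HKt$, and that $f-\Pi_t f\perp\HKt$. Uniqueness of the orthogonal decomposition $\HK=\HKt\oplus\HKt^{\perp}$ then identifies this map with the orthogonal projection. The projection theorem applies because $\HKt$ is closed, by the preceding Proposition (\Cref{prop::mean_zero_RKHS}). The hypothesis $m_{\Pt}\neq 0$ is used only so that the coefficient is well defined.

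First, I will show $\Pi_t f\in\HKt$. Since $\Pi_t f$ is a linear combination of $f$ and $m_{\Pt}$, it lies in $\HK$. Taking the inner product with $m_{\Pt}$ gives
\[
\langle \Pi_t f, m_{\Pt}\rangle_{\HK}=\langle f,m_{\Pt}\rangle_{\HK}-\frac{\langle f,m_{\Pt}\rangle_{\HK}}{\|m_{\Pt}\|_{\HK}^2}\|m_{\Pt}\|_{\HK}^2=0.
\]
By \Cref{prop::mean_zero_RKHS}, $\HKt=(\operatorname{span}\{m_{\Pt}\})^\perp$, so $\Pi_t f\in\HKt$.

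Second, I will show $f-\Pi_t f\perp\HKt$. Here $f-\Pi_t f=c\,m_{\Pt}$ with $c=\langle f,m_{\Pt}\rangle_{\HK}/\|m_{\Pt}\|_{\HK}^2$. Every $g\in\HKt$ satisfies $\langle g,m_{\Pt}\rangle_{\HK}=0$, hence $\langle f-\Pi_t f,g\rangle_{\HK}=0$. The pair $(\Pi_t f,\,f-\Pi_t f)$ is therefore the orthogonal decomposition of $f$. By uniqueness, $\Pi_t$ is the orthogonal projection.

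Finally, the second expression in the statement follows by replacing $\langle f,m_{\Pt}\rangle_{\HK}$ with $\Pt[f]$, using \Cref{prop::RKHS_expectation}. No step is a genuine obstacle. The only points needing care are that $m_{\Pt}$ belongs to $\HK$ (this is the existence of the Bochner integral from \Cref{prop::RKHS_expectation}) and that $\HKt$ is closed, so that the projection exists and is unique.
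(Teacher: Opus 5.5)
Your proof is correct and takes essentially the same route as the paper. The paper posits $\Pi_t f = f - c\,m_{\Pt}$ and solves $\langle f - c\,m_{\Pt}, m_{\Pt}\rangle_{\HK} = 0$ for $c$, which is the computation in your first step; your extra check that $f-\Pi_t f\perp\HKt$ supplies the justification for that ansatz, which the paper leaves implicit.
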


\begin{proof}
Since $\HKt=(\operatorname{span}\{m_{\Pt}\})^\perp$, the orthogonal projection of $f\in\HK$ onto $\HKt$ is obtained by subtracting its component along $m_{\Pt}$. Writing
\[
\Pi_t f=f-c\,m_{\Pt}
\]
and imposing $\Pi_t f\in\HKt$ yields
\[
0=\langle f-c\,m_{\Pt},m_{\Pt}\rangle_{\HK}
=
\langle f,m_{\Pt}\rangle_{\HK}-c\|m_{\Pt}\|_{\HK}^2.
\]
Solving for $c$ gives
\[
c=\frac{\langle f,m_{\Pt}\rangle_{\HK}}{\|m_{\Pt}\|_{\HK}^2},
\]
which yields the stated formula.
\end{proof}

\begin{proposition}[Empirical Riesz representer in the mean-zero RKHS]
There exists a unique element $m_n^{(t)}\in\HKt$ such that
\[
P_n[f]=\langle f,m_n^{(t)}\rangle_{\HKt}
\qquad\text{for all }f\in\HKt.
\]
Moreover,
\[
m_n^{(t)}=\frac1n\sum_{i=1}^n \kt_{O_i}.
\]
\end{proposition}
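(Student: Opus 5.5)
Existence and uniqueness come straight from the Riesz representation theorem; the explicit formula is then identified through the reproducing property of $\HKt$. The argument runs in three steps.

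\textbf{Existence and uniqueness.} By Proposition~\ref{prop::RKHS_expectation}, each $|f(O_i)|=|\langle f,k_{O_i}\rangle_{\HK}|\le \|f\|_{\HK}$, since $K(o,o)=1$. Hence the map $L_n(f)=P_n[f]$ satisfies $|L_n(f)|\le\|f\|_{\HK}$ on $\HK$. Its restriction to the closed subspace $\HKt$ is therefore a bounded linear functional on a Hilbert space; $\HKt$ is closed and carries the inherited inner product by Proposition~\ref{prop::mean_zero_RKHS}. The Riesz representation theorem then yields a unique $m_n^{(t)}\in\HKt$ with $P_n[f]=\langle f,m_n^{(t)}\rangle_{\HKt}$ for all $f\in\HKt$. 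Uniqueness is the usual argument: two representers have a difference orthogonal to all of $\HKt$ while lying in $\HKt$, so it is zero.

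\textbf{Explicit form.} Show that $g:=\frac1n\sum_i \kt_{O_i}$ lies in $\HKt$ and represents $L_n$; uniqueness then gives $m_n^{(t)}=g$. Membership is immediate, because $\kt_{O_i}=\Pi_t k_{O_i}\in\HKt$ and $\HKt$ is a linear space. For the representing property, the key fact is that $\kt_o$ reproduces evaluation on $\HKt$. For $f\in\HKt$, self-adjointness and idempotence of the orthogonal projection, together with $\Pi_t f=f$, give
\[
\langle f,\Pi_t k_o\rangle_{\HK}=\langle \Pi_t f,k_o\rangle_{\HK}=\langle f,k_o\rangle_{\HK}=f(o).
\]
When $m_{\Pt}\neq0$ one can instead verify this directly from Proposition~\ref{prop::projection_kernel}: $\langle f,m_{\Pt}\rangle_{\HK}=\Pt[f]=0$ kills the correction term. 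When $m_{\Pt}=0$ we have $\Pi_t=\mathrm{id}$, so the identity is trivial. Summing over the sample by linearity,
\[
\langle f,g\rangle_{\HKt}=\frac1n\sum_i f(O_i)=P_n[f].
\]

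\textbf{Where care is needed.} No step is a real obstacle. The one point to state carefully is that the inner product on $\HKt$ is the one inherited from $\HK$, so that $\langle\cdot,\cdot\rangle_{\HKt}=\langle\cdot,\cdot\rangle_{\HK}$ on $\HKt$. It is also worth noting explicitly that $\kt_o$ is the reproducing kernel section of $\HKt$, which is exactly what the projection identity establishes.
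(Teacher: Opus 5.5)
Your proposal is correct and follows the paper's proof. You bound $|P_n[f]|\le\|f\|_{\HK}$ via $K(o,o)\le 1$, apply the Riesz representation theorem on the closed subspace $\HKt$, and identify $m_n^{(t)}$ through the reproducing property of $\HKt$; your check that $\langle f,\Pi_t k_o\rangle_{\HK}=f(o)$ for $f\in\HKt$ (by self-adjointness of $\Pi_t$) just spells out the step the paper dismisses in one line.
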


\begin{proof}
Linearity of $L_n$ follows immediately from linearity of summation. To verify boundedness, note that for any $f\in\HKt\subset\HK$ and any $o\in\O$, the reproducing property gives $f(o)=\langle f,k_o\rangle_{\HK}$. Since the Gaussian kernel satisfies $K(o,o)\le1$, we have $\|k_o\|_{\HK}\le1$, and therefore
\[
|f(o)|\le \|f\|_{\HK}.
\]
Applying this bound to each observation $O_i$ yields
\[
|P_n[f]|
\le
\frac1n\sum_{i=1}^n |f(O_i)|
\le
\|f\|_{\HK}.
\]
Hence $L_n$ is a bounded linear functional on the Hilbert space $\HKt$, and the Riesz representation theorem guarantees the existence and uniqueness of $m_n^{(t)}\in\HKt$ satisfying
\[
P_n[f]=\langle f,m_n^{(t)}\rangle_{\HKt}.
\]
The explicit expression follows from the reproducing property in $\HKt$.
\end{proof}

\section{Appendix B}\label{append::B}

\begin{algorithm}[t]
\caption{ULFS--KDPE (discretized RKHS-restricted flow on $\O$)}
\label{alg:kdpe_ulfs_generalO}
\begin{algorithmic}[1]
\REQUIRE Data $O_1,\ldots,O_n$; initial density estimate $\hat p_0$; kernel $K$ on $\O$; step size $\Delta > 0$; tolerances $\delta_p,\delta_s,\delta_\alpha,\delta_v,\delta_\ell$; maximum iterations $M$.
\ENSURE Final iterate $\hat P_T$ (density $\hat p_T$) and plug-in estimate $\hat\psi_T := \Psi(\hat P_T)$.

\STATE Set $t\leftarrow 0$ and $\hat p_t\leftarrow \hat p_0$.
\STATE Initialize $\Delta_t^{(p)}\leftarrow +\infty$, $\Delta_t^{(\ell)}\leftarrow +\infty$, $s_t\leftarrow +\infty$.

\FOR{$m=0,1,\ldots,M-1$}

    \STATE Construct the current iterate $\hat P_t$ (with density $\hat p_t$).

    \STATE Center the kernel at $\hat P_t$ to obtain the mean-zero RKHS kernel $\Kt$ on $\O$ (equivalently, via the mean embedding $m_{\hat P_t}:=\int k_o\,d\hat P_t(o)$).
    \STATE Form the centered Gram matrix $G^{(t)}:=\bigl[\Kt(O_i,O_j)\bigr]_{i,j=1}^n$.

    \STATE Compute $\valpha^{(t)}\leftarrow \frac{1}{n}G^{(t)}\one$, so $\alpha_j^{(t)}=\P_n[\kt_{O_j}]=m_n^{(t)}(O_j)$.

    \STATE Define the RKHS direction function $D_t(o):=D(\hat p_t)(o):=\frac{1}{n}\sum_{j=1}^n \alpha_j^{(t)}\,\kt_{O_j}(o)$
    and its evaluations on the sample $\textbf d^{(t)}\leftarrow \bigl(D_t(O_i)\bigr)_{i=1}^n=\frac{1}{n}G^{(t)}\valpha^{(t)}$.

    \STATE Compute the Lyapunov score $s_t\leftarrow \P_n[D_t]=\frac{1}{n}\sum_{j=1}^n\bigl(\alpha_j^{(t)}\bigr)^2=\frac{1}{n}\|\valpha^{(t)}\|_2^2$.

    \IF{$m\ge 1$}
        \STATE Compute plateau diagnostics:
        $\Delta_t^{(p)}\leftarrow \P_n\!\left[\bigl(\log\hat p_t(O)-\log\hat p_{t-\Delta}(O)\bigr)^2\right]$,
        \STATE \hspace{1.45em}
        $\Delta_t^{(\ell)}\leftarrow \Bigl|\P_n\!\left[\log\hat p_t(O)-\log\hat p_{t-\Delta}(O)\right]\Bigr|$.
    \ENDIF

    \IF{stopping criteria in \S\ref{subsubsec::stopping} are satisfied}
        \STATE \textbf{break}
    \ENDIF

    \STATE Update the density by an explicit Euler step on the log-density:
    \STATE \hspace{1.45em}
    $\hat p_{t+\Delta}(o)\leftarrow \hat p_t(o)\exp\!\bigl(\Delta\,D_t(o)\bigr)$.

    \STATE Normalize:
    $\hat p_{t+\Delta}(o)\leftarrow \hat p_{t+\Delta}(o)\Big/\int \hat p_{t+\Delta}(u)\,d\mu(u)$.

    \STATE Set $t\leftarrow t+\Delta$ and overwrite $\hat p_t\leftarrow \hat p_{t+\Delta}$.
\ENDFOR

\STATE Set $T\leftarrow t$ and output $\hat P_T$ with density $\hat p_T:=\hat p_t$ and $\hat\psi_T:=\Psi(\hat P_T)$.
\end{algorithmic}
\end{algorithm}

\section{Appendix C}\label{append::C}

\subsection{Product Estimates and Banach Algebra Structure of Hölder space}
\label{append::D01}

To establish well-posedness of the nonlinear ODE in Equation~\eqref{eq::newode} in a
Banach space setting, it is essential to verify that the Hölder space $C^{1,\alpha}(\mathcal{O})$ is stable under multiplication and that the
corresponding norms can be controlled quantitatively. The first result below, Lemma~\ref{lemma:product}, provides a basic Hölder seminorm estimate for products of functions, showing that multiplication does not amplify oscillations beyond those already present in the individual factors. Building on this, Lemma~\ref{lem:Calg} establishes that $C^{1,\alpha}(\mathcal{O})$ is a Banach algebra: the product of two $C^{1,\alpha}$ functions remains in $C^{1,\alpha}$, with an explicit norm bound depending only on the domain and the Hölder exponent. Together, these
estimates form the analytic backbone needed to control the nonlinear vector
field $p \mapsto pD(p)$ and to apply standard existence and uniqueness results
for ordinary differential equations in Banach spaces.

\begin{lemma}
\label{lemma:product}
For $g,h \in C^{0,\alpha}(\mathcal{O})$, $$[gh]_{\alpha} \leq \|g\|_{\infty} [h]_{\alpha} + \|h\|_{\infty} [g]_{\alpha}$$
\end{lemma}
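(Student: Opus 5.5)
The plan is the standard add-and-subtract argument applied pointwise, followed by taking suprema. Since $\O$ is the disjoint union of the four slices $\O_{a,y}$, the Hölder seminorm on $\O$ is the maximum over slices of the seminorms on $[0,1]^d$, consistent with the norm convention of \S\ref{sub::holder}. It therefore suffices to prove the inequality for functions on a single slice. There $\|o-o'\|_2$ is the Euclidean distance in the continuous coordinate. Once the slice-wise bound is in hand, we take the maximum over $(a,y)$. The sup norms on each slice are dominated by the global sup norms, so the bound transfers to $\O$.

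On a fixed slice, take two points $x\neq y$ and write
\[
g(x)h(x)-g(y)h(y) = g(x)\bigl(h(x)-h(y)\bigr) + h(y)\bigl(g(x)-g(y)\bigr).
\]
By the triangle inequality,
\[
|g(x)h(x)-g(y)h(y)| \le \|g\|_\infty |h(x)-h(y)| + \|h\|_\infty |g(x)-g(y)|.
\]
Next, divide by $\|x-y\|_2^\alpha$ and bound each difference quotient by the corresponding seminorm $[h]_\alpha$ or $[g]_\alpha$. The right-hand side is then bounded by $\|g\|_\infty[h]_\alpha+\|h\|_\infty[g]_\alpha$, uniformly in $x\neq y$. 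Taking the supremum over $x\neq y$ gives the claim on the slice.

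There is no real obstacle here. The only point needing care is the convention for the seminorm across slices. If one instead defined $[\cdot]_\alpha$ with pairs of points taken from different slices, the distance would not be defined by $\|o-O_k\|_2$. I will therefore state explicitly that the seminorm is taken slice-wise, i.e.\ as a maximum over $(a,y)$, matching the definition of $C^{1,\alpha}(\O)$ as a direct sum. Finiteness of $\|g\|_\infty$ and $\|h\|_\infty$ for $C^{0,\alpha}$ functions on compact slices is part of the definition of the space, so no further work is needed.
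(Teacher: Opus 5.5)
Your proposal is correct and matches the paper's proof. It uses the same add-and-subtract identity $g(x)h(x)-g(y)h(y)=g(x)\bigl(h(x)-h(y)\bigr)+h(y)\bigl(g(x)-g(y)\bigr)$ on a fixed slice $\mathcal{O}_{a,y}$, divides by $\|x-y\|_2^\alpha$, and takes the supremum over $x\neq y$; you also state explicitly the slice-wise convention for the seminorm, which the paper uses only implicitly.
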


\begin{proof}
For any distinct $x,y\in [0,1]^d$ and fixed $b\in\{0,1\}^2$,
\[
g(x,b)h(x,b)-g(y,b)h(y,b)
= g(x,b)\big(h(x,b)-h(y,b)\big) + h(y,b)\big(g(x,b)-g(y,b)\big).
\]
For the slice $b$, taking absolute values and dividing by $\|x-y\|_2^\alpha$ gives
\[
\frac{|g(x)h(x)-g(y)h(y)|}{\|x-y\|_2^\alpha}
\le |g(x)|\frac{|h(x)-h(y)|}{\|x-y\|_2^\alpha}
+ |h(y)|\frac{|g(x)-g(y)|}{\|x-y\|_2^\alpha}.
\]
Now take the supremum over all $x\neq y \in [0,1]^d$, we have
\[
[gh]_{\alpha}
\le \|g\|_{\infty}\, [h]_{\alpha}
+\|h\|_{\infty}\, [g]_{\alpha}
\]
for any slice $b$.
\end{proof}

The next Lemma can be applied to any fixed slice $b\in\{0,1\}^2$.

\begin{lemma}\label{lem:Calg}
Let $\O\subset\mathbb R^d$ be convex and bounded, and let $\alpha\in(0,1]$. Then there exists a constant $C_{\mathrm{alg}}<\infty$ such that
\[
\|fg\|_{C^{1,\alpha}(\O)}
\le C_{\mathrm{alg}}\,
\|f\|_{C^{1,\alpha}(\O)}\,
\|g\|_{C^{1,\alpha}(\O)}
\qquad \forall f,g\in C^{1,\alpha}(\O).
\]
Moreover, one may take the explicit bound
\[
C_{\mathrm{alg}} = 1 + 2\,\mathrm{diam}(\O)^{\,1-\alpha},\quad
\mathrm{diam}(\O):=\sup_{x,y\in\O}\|x-y\|_2.
\]
In particular, for $\O=[0,1]^d$,
\[
\mathrm{diam}(\O)=\sqrt d
\quad\text{and thus}\quad
C_{\mathrm{alg}} = 1 + 2\,d^{\frac{1-\alpha}{2}}.
\]
\end{lemma}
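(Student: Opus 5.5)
We will prove the bound by splitting $\|fg\|_{C^{1,\alpha}}$ into its three constituent pieces (sup norm, gradient sup norms, Hölder seminorms of the partial derivatives) and bounding each by products of the corresponding pieces of $f$ and $g$. Write $f_0=\|f\|_\infty$, $f_1=\sum_i\|\nabla_i f\|_\infty$, $f_2=\sum_i[\nabla_i f]_\alpha$, and similarly for $g$, so that $\|f\|_{C^{1,\alpha}}=f_0+f_1+f_2$.

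\textbf{Sup norm and first-order terms.} First, $\|fg\|_\infty\le f_0g_0$. Next, by the product rule $\nabla_i(fg)=g\nabla_i f+f\nabla_i g$, so
\[
\sum_i\|\nabla_i(fg)\|_\infty\le g_0f_1+f_0g_1 .
\]

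\textbf{Hölder seminorm term.} For the seminorm of $\nabla_i(fg)$ we apply Lemma~\ref{lemma:product} to each of the two products:
\[
[g\nabla_i f]_\alpha\le \|g\|_\infty[\nabla_i f]_\alpha+\|\nabla_i f\|_\infty[g]_\alpha ,
\]
and symmetrically for $[f\nabla_i g]_\alpha$. The terms $[g]_\alpha$ and $[f]_\alpha$ are not part of the $C^{1,\alpha}$ norm, so we control them by the gradient. Convexity of $\O$ and the mean value theorem give $|g(x)-g(y)|\le \bigl(\sup_x\|\nabla g(x)\|_2\bigr)\|x-y\|_2\le g_1\|x-y\|_2$, using $\|\nabla g\|_2\le\sum_i|\nabla_i g|$. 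Therefore
\[
\frac{|g(x)-g(y)|}{\|x-y\|_2^\alpha}\le g_1\,\|x-y\|_2^{1-\alpha}\le \mathrm{diam}(\O)^{1-\alpha}g_1,
\]
that is, $[g]_\alpha\le \mathrm{diam}(\O)^{1-\alpha}g_1$. Summing over $i$ yields
\[
\sum_i[\nabla_i(fg)]_\alpha\le g_0f_2+f_0g_2+2\,\mathrm{diam}(\O)^{1-\alpha}f_1g_1 .
\]

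\textbf{Collecting terms.} Write $\rho=\mathrm{diam}(\O)^{1-\alpha}$. Adding the three bounds,
\[
\|fg\|_{C^{1,\alpha}}\le f_0g_0+f_0g_1+f_1g_0+f_0g_2+f_2g_0+2\rho\, f_1g_1 .
\]
Every term on the right is a product $f_ag_b$ with coefficient $1$ or $2\rho$. The expansion $(f_0+f_1+f_2)(g_0+g_1+g_2)$ contains each of these cross products with coefficient $1$, and all the $f_a,g_b$ are nonnegative. Hence the right-hand side is at most $(1+2\rho)\|f\|_{C^{1,\alpha}}\|g\|_{C^{1,\alpha}}$, which is the stated constant $C_{\mathrm{alg}}=1+2\,\mathrm{diam}(\O)^{1-\alpha}$. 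For $[0,1]^d$ we have $\mathrm{diam}=\sqrt d$, giving $C_{\mathrm{alg}}=1+2\,d^{(1-\alpha)/2}$.

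For the slice structure of $\O$, we apply this argument slicewise and take the maximum over slices. The only nontrivial step is bounding the $C^{0,\alpha}$ seminorm of $f$ itself by its gradient. This is where convexity and boundedness of $\O$ enter, and it is the source of the diameter factor in $C_{\mathrm{alg}}$.
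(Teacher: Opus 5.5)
Your proof is correct and follows essentially the same route as the paper: bound the sup, first-derivative, and derivative-H\"older pieces separately, control $[f]_\alpha$ and $[g]_\alpha$ by the gradient via the mean value theorem on the convex domain (which is where the $\mathrm{diam}(\O)^{1-\alpha}$ factor comes from), and absorb everything into $(1+2\,\mathrm{diam}(\O)^{1-\alpha})\|f\|_{C^{1,\alpha}}\|g\|_{C^{1,\alpha}}$. Your componentwise bookkeeping with $\sum_i\|\nabla_i\cdot\|_\infty$ and $\sum_i[\nabla_i\cdot]_\alpha$ actually matches the paper's stated norm more faithfully than the paper's own proof, which uses the Euclidean norm of the full gradient vector instead.
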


\begin{proof}
Fix $f,g\in C^{1,\alpha}(\O)$, write
\[
A:=\|f\|_{\infty},B:=\|\nabla f\|_{\infty},C:=[\nabla f]_{\alpha},
A':=\|g\|_{\infty},B':=\|\nabla g\|_{\infty},C':=[\nabla g]_{\alpha}.
\]

By the definition of $\|\cdot\|_\infty$,
\[
\|fg\|_{\infty}\le \|f\|_{\infty}\|g\|_{\infty}=AA'.
\]

Using the product rule $\nabla(fg)=f\,\nabla g+g\,\nabla f$,
\[
\|\nabla(fg)\|_{\infty}
\le \|f\|_{\infty}\|\nabla g\|_{\infty}
   +\|g\|_{\infty}\|\nabla f\|_{\infty}
=AB'+A'B.
\]

For $x,y\in [0,1]^d$, $x\neq y$, we write
\[
\begin{aligned}
\nabla(fg)(x)-\nabla(fg)(y)
&= f(x)\nabla g(x)+g(x)\nabla f(x)-f(y)\nabla g(y)-g(y)\nabla f(y)\\
&= f(x)\big(\nabla g(x)-\nabla g(y)\big)
   +\big(f(x)-f(y)\big)\nabla g(y)\\
&\quad + g(x)\big(\nabla f(x)-\nabla f(y)\big)
   +\big(g(x)-g(y)\big)\nabla f(y).
\end{aligned}
\]
Taking Euclidean norms and applying the triangle inequality gives
\[
\begin{aligned}
\|\nabla(fg)(x)-\nabla(fg)(y)\|_2
&\le \|f\|_{\infty}\,\|\nabla g(x)-\nabla g(y)\|_2
     +|f(x)-f(y)|\,\|\nabla g\|_{\infty}\\
&\quad + \|g\|_{\infty}\,\|\nabla f(x)-\nabla f(y)\|_2
     +|g(x)-g(y)|\,\|\nabla f\|_{\infty}.
\end{aligned}
\]
Divide by $\|x-y\|_2^\alpha$ and take the supremum over $x\neq y$:
\[
[\nabla(fg)]_{\alpha}
\le A\,C' + A'\,C + \Big(\sup_{x\neq y}\frac{|f(x)-f(y)|}{\|x-y\|_2^\alpha}\Big)B'
                    + \Big(\sup_{x\neq y}\frac{|g(x)-g(y)|}{\|x-y\|_2^\alpha}\Big)B.
\]
It remains to control the Hölder seminorms of $f$ and $g$ by their gradients.
By the mean value theorem, for any $x,y\in [0,1]^d$,
\[
|f(x)-f(y)| \le \|\nabla f\|_{\infty}\,\|x-y\|_2 = B\,\|x-y\|_2,
\]
hence
\[
\frac{|f(x)-f(y)|}{\|x-y\|_2^\alpha}\le B\,\|x-y\|_2^{1-\alpha}
\le B\,\mathrm{diam}(\O)^{1-\alpha}.
\]
Taking the supremum over $x\neq y$ yields
\[
[f]_{\alpha} \le  \mathrm{diam}(\O)^{1-\alpha}\,B.
\]
Similarly, $[g]_{\alpha}\le \mathrm{diam}(\O)^{1-\alpha}\,B'$.
Plugging these into the previous display gives
\[
[\nabla(fg)]_{\alpha}
\le A\,C' + A'\,C + 2\,\mathrm{diam}(\O)^{1-\alpha}\,B\,B'.
\]

Summing the bounds above, we obtain
\[
\|fg\|_{C^{1,\alpha}(\O)}
\le AA' + (AB'+A'B) + (AC'+A'C) + 2\,\mathrm{diam}(\O)^{1-\alpha}\,BB'.
\]
Now observe that each term on the right-hand side is bounded by
\[
(1+2\,\mathrm{diam}(\O)^{1-\alpha})(A+B+C)(A'+B'+C'),
\]
since $AA',\,AB',\,A'B,\,AC',\,A'C \le (A+B+C)(A'+B'+C')$ and
$2\,\mathrm{diam}(\O)^{1-\alpha}BB' \le (1+2\,\mathrm{diam}(\O)^{1-\alpha})(A+B+C)(A'+B'+C')$.
Therefore,
\[
\|fg\|_{C^{1,\alpha}(\O)}
\le \bigl(1+2\,\mathrm{diam}(\O)^{1-\alpha}\bigr)
\|f\|_{C^{1,\alpha}(\O)}\,\|g\|_{C^{1,\alpha}(\O)}.
\]
This proves the claim with $C_{\mathrm{alg}}=1+2\,\mathrm{diam}(\O)^{1-\alpha}$.
Finally, when $\O=[0,1]^d$, the Euclidean diameter is
\[
\mathrm{diam}(\O)
=\sup_{x,y\in[0,1]^d}\|x-y\|_2
=\| (1,\dots,1)-(0,\dots,0)\|_2
=\sqrt d,
\]
and hence $C_{\mathrm{alg}} = 1 + 2\,d^{(1-\alpha)/2}$.
\end{proof}

\section{Appendix D}\label{append::D}

Recall that the observation space is $\mathcal O = [0,1]^d\times\{0,1\}\times\{0,1\}$ and write $O_k=(X_k,A_k,Y_k)$. For $(a,y)\in\{0,1\}^2$ define the slice $\mathcal O_{a,y}:=[0,1]^d\times\{a\}\times\{y\}$. For any $o=(x,A_k,Y_k) \in \mathcal O_{A_k,Y_k}$, the Euclidean distance to $O_k$ along the continuous coordinates is $\|o-O_k\|_2:=\|x-X_k\|_2$, which is well-defined on the slice $\mathcal O_{A_k,Y_k}$. Fix $\delta_n > 0$. Let $C$ and $M_{1,\alpha}$ be the constants from \Cref{ass::kernelbound} and define the radius $w(\delta_n) := \frac{\delta_n}{2 C M_{1,\alpha}}$. For each $k=1,\ldots,n$, define the slice-restricted ball
\[
B(O_k,w(\delta_n)) := \bigl\{o\in\mathcal O_{A_k,Y_k}:\ \|o-O_k\|_2<w(\delta_n) = \frac{\delta_n}{2 C M_{1,\alpha}} \bigr\}.
\]
Assume that $p_0$ assigns positive mass to each neighborhood $B(O_k,w(\delta_n))$,
%i.e. $\int_{B(O_k,w(\delta_n))} p_0(o)\,d\mu(o)>0$, $k=1,\ldots,n$,  
and set
\[
\mathfrak J(\delta_n) := \max_{1\le k\le n} \left(\int_{B(O_k,w(\delta_n))} p_0(o)\,d\mu(o)\right)^{-1}.
\]
To avoid a boundary/equality case in the contradiction argument, we introduce a fixed slack factor (any constant $>1$ suffices) and define $T:=\frac{2}{\delta_n}\log\!\bigl(2\,\mathfrak J(\delta_n)\bigr)$.
%NOTE: Originally, no fixed slack factor. Makes contradiction strict (this fixes the equality edge case in Theorem \ref{thm:finitestep}).
%\[
%T:=\frac{2}{\delta_n}\log\!\bigl(\,\mathfrak J(\delta_n)\bigr).
%\]
Let $M_0 := \|p_0\|_{C^{1,\alpha}(\mathcal O)}$ and define
\[
M := M_0\exp\!\bigl(C_{\mathrm{alg}} C M_{1,\alpha}T\bigr) = M_0\bigl(2\,\mathfrak J(\delta_n)\bigr)^{\frac{2C_{\mathrm{alg}}CM_{1,\alpha}}{\delta_n}}, 
\]
where $C_{\mathrm{alg}}$ is a constant in the Banach algebra property of $C^{1, \alpha}(\O)$ defined in \Cref{lem:Calg}. We then introduce the (closed) constraint set
\[
\mathcal B_M := \Bigl\{p\in C^{1,\alpha}(\mathcal O):\ p\ge0,\ \int_{\mathcal O}p\,d\mu=1,\ \|p\|_{C^{1,\alpha}(\mathcal O)}\le M\Bigr\}.
\]
All existence/uniqueness arguments will be carried out on an open neighborhood of $p_0$ in $C^{1,\alpha}(\mathcal O)$ and we will subsequently verify that the resulting solution remains in $\mathcal B_M$ on $[0,T]$.

\begin{lemma}[Uniform $L^{\infty}$ bound for $D_t$]
\label{lem:Dt-sup-bound}
Recall $D_t = D(p_t) := \frac1n\sum_{j=1}^n \alpha^{(t)}_j\,k^{(t)}_{O_j}$ where 
\[
\alpha^{(t)}_j := P_n\!\left[k^{(t)}_{O_j}\right]
= \frac1n\sum_{i=1}^n K^{(t)}(O_i,O_j),\qquad j=1,\dots,n.
\]
Assume the base positive definite kernel $K$ satisfies $\sup_{o\in\mathcal O} K(o,o)\le C$ for some constant $C$. Then for every $t\in I$, we have
\[
\|D_t\|_{\infty} \le C^2.
\]
In particular, the bound holds uniformly in $t$, and pathwise for any fixed sample.
\end{lemma}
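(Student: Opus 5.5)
The plan is to bound everything by the diagonal of the centered kernel $K^{(t)}$. Cauchy–Schwarz in $\HKt$ will give the bound on each coefficient $\alpha^{(t)}_j$ and on each kernel section $k^{(t)}_{O_j}(o)$ separately, and the two bounds then combine by the triangle inequality.

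\textbf{Step 1 (diagonal bound for the centered kernel).} I would show $K^{(t)}(o,o)\le K(o,o)\le C$ for all $o\in\O$. By the explicit formula after Proposition~\ref{prop::projection_kernel},
\[
K^{(t)}(o,o)=K(o,o)-\frac{m_{\Pt}(o)^2}{\|m_{\Pt}\|_{\HK}^2}\le K(o,o)
\]
when $m_{\Pt}\neq0$, and $K^{(t)}=K$ otherwise. More abstractly, $K^{(t)}(o,o)=\|\Pi_t k_o\|_{\HK}^2\le\|k_o\|_{\HK}^2$ because orthogonal projections are contractions. This handles both cases at once.

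\textbf{Step 2 (pointwise bounds).} Using the reproducing property in $\HKt$ and Cauchy–Schwarz,
\[
|K^{(t)}(o,o')|=|\langle k^{(t)}_o,k^{(t)}_{o'}\rangle_{\HK}|\le \sqrt{K^{(t)}(o,o)K^{(t)}(o',o')}\le C
\]
for all $o,o'\in\O$. Two consequences follow:
\[
|\alpha^{(t)}_j|\le \frac1n\sum_{i=1}^n|K^{(t)}(O_i,O_j)|\le C,
\qquad
|k^{(t)}_{O_j}(o)|=|K^{(t)}(O_j,o)|\le C.
\]

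\textbf{Step 3 (conclusion).} For any $o\in\O$,
\[
|D_t(o)|\le \frac1n\sum_{j=1}^n|\alpha^{(t)}_j|\,|k^{(t)}_{O_j}(o)|\le \frac1n\sum_{j=1}^n C\cdot C=C^2.
\]
Taking the supremum over $o$ gives $\|D_t\|_\infty\le C^2$. None of the constants depend on $t$ or on the sample, so the bound is uniform in $t$ and holds pathwise.

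The only step with any substance is Step 1. The point to watch is that projecting onto the mean-zero subspace cannot increase the diagonal of the kernel, including in the degenerate case $m_{\Pt}=0$; the contraction property of $\Pi_t$ settles both cases. The rest is bookkeeping. One could obtain a sharper bound by writing $D_t(o)=\langle \widehat C_t m_n^{(t)},k^{(t)}_o\rangle$ and using $\|m_n^{(t)}\|\le\sqrt C$ and $\|\widehat C_t\|\le C$, which gives $C^{3/2}\sqrt{C}=C^2$ as well. I would use the elementary route above.
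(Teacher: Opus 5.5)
Your proposal is correct and matches the paper's proof essentially step for step. The steps are: the diagonal bound $K^{(t)}(o,o)=\|\Pi_t k_o\|_{\HK}^2\le K(o,o)\le C$ from the contraction property of the orthogonal projection, then Cauchy--Schwarz to get $|K^{(t)}(o,o')|\le C$ and hence $|\alpha^{(t)}_j|\le C$, and finally the triangle inequality to conclude $|D_t(o)|\le C^2$.
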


\begin{proof}
Since $\mathcal H^{(t)}_K$ is a closed subspace of $\mathcal H_K$, the orthogonal projection exists. Each section satisfies $k^{(t)}_o = \Pi_t k_o$, where $\Pi_t$ denotes the orthogonal projection onto $\mathcal H^{(t)}_K$.
Therefore, by \Cref{ass::kernelbound}, diagonal of the projected kernel shrinks, 
\[
K^{(t)}(o,o)
= \|k^{(t)}_o\|^2_{\mathcal H^{(t)}_K}
= \|\Pi_t k_o\|^2_{\mathcal H_K}
\le \|k_o\|^2_{\mathcal H_K}
= K(o,o)
\le C.
\]
By Cauchy--Schwarz in $\mathcal H^{(t)}_K$,
\[
|K^{(t)}(o,o')|
= \big|\langle k^{(t)}_o,k^{(t)}_{o'}\rangle_{\mathcal H^{(t)}_K}\big|
\le \|k^{(t)}_o\|_{\mathcal H^{(t)}_K}\,\|k^{(t)}_{o'}\|_{\mathcal H^{(t)}_K}
= \sqrt{K^{(t)}(o,o)\,K^{(t)}(o',o')}
\le C.
\]
Hence, for each $j$, we can bound the coefficients,
\[
|\alpha^{(t)}_j|
=\left|\frac1n\sum_{i=1}^n K^{(t)}(O_i,O_j)\right|
\le \frac1n\sum_{i=1}^n |K^{(t)}(O_i,O_j)|
\le C.
\]
Finally, using $k^{(t)}_{O_j}(o)=K^{(t)}(o,O_j)$, we obtain for every $o\in\mathcal O$, we can bound $D_t$ pointwise as
\[
|D_t(o)|
=
\left|\frac1n\sum_{j=1}^n \alpha^{(t)}_j\,K^{(t)}(o,O_j)\right|
\le
\frac1n\sum_{j=1}^n |\alpha^{(t)}_j|\,|K^{(t)}(o,O_j)|
\le
\frac1n\sum_{j=1}^n C\cdot C
=C^2.
\]
Taking the supremum over $o\in\mathcal O$ yields $\|D_t\|_\infty\le C^2$.
\end{proof}

\Cref{lem:Dt-sup-bound} gives us a uniform $L^{\infty}$ bound for $D_t$; score-type direction $D_t$ cannot blow up pointwise as long as the kernel itself is bounded (on the diagonal). As before, for the Gaussian kernel, we have $C=1$. The next \Cref{lem:Dt-holder-bound} shows that, if kernel sections are uniformly $C^{1, \alpha}$-bounded along the flow, then $D_t$ inherits the same regularity.

\begin{lemma}
\label{lem:Dt-holder-bound}
%Assume $\mathcal O \subset \mathbb R^d$ is compact and fix $\alpha\in(0,1]$. 
Let $\O = [0,1]^d\times\{0,1\}^2$ and fix $\alpha\in(0,1]$. Suppose that for some constant $M_{1,\alpha}<\infty$, 
\begin{equation}\label{eq:kernel-section-holder-unif}
\sup_{t\in I}\ \sup_{1\le j\le n}\ \bigl\|k^{(t)}_{O_j}\bigr\|_{C^{1,\alpha}}
\le M_{1,\alpha}.
\end{equation}
Then for every $t \in I$, $D_t\in C^{1,\alpha}(\O)$ and $\sup_{t\in I}\ \|D_t\|_{C^{1,\alpha}} \le CM_{1,\alpha}$.
\end{lemma}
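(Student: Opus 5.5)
The bound follows from the finite representation of $D_t$ as a linear combination of kernel sections. By definition,
\[
D_t=\frac1n\sum_{j=1}^n \alpha^{(t)}_j\,k^{(t)}_{O_j},
\]
a finite linear combination of the projected kernel sections $k^{(t)}_{O_j}$. By hypothesis \eqref{eq:kernel-section-holder-unif}, each of these lies in $C^{1,\alpha}(\O)$ with norm at most $M_{1,\alpha}$. Since $C^{1,\alpha}(\O)$ is a vector space and $\|\cdot\|_{C^{1,\alpha}}$ is a norm, applied slice by slice and then maximized over $(a,y)\in\{0,1\}^2$, we get $D_t\in C^{1,\alpha}(\O)$ for every $t\in I$. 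The triangle inequality and absolute homogeneity then give
\[
\|D_t\|_{C^{1,\alpha}}\le \frac1n\sum_{j=1}^n |\alpha^{(t)}_j|\,\bigl\|k^{(t)}_{O_j}\bigr\|_{C^{1,\alpha}}.
\]

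The coefficients are controlled exactly as in the proof of \Cref{lem:Dt-sup-bound}. The projected kernel has shrunken diagonal,
\[
K^{(t)}(o,o)=\|\Pi_t k_o\|^2_{\HK}\le K(o,o)\le C,
\]
and Cauchy--Schwarz in $\HKt$ gives $|K^{(t)}(o,o')|\le C$. Since each $\alpha^{(t)}_j$ is an average of such values, $|\alpha^{(t)}_j|\le C$ uniformly in $j$ and $t$. Substituting this and $\|k^{(t)}_{O_j}\|_{C^{1,\alpha}}\le M_{1,\alpha}$ into the display yields $\|D_t\|_{C^{1,\alpha}}\le \frac1n\cdot n\cdot C M_{1,\alpha}=CM_{1,\alpha}$. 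The bound is independent of $t$, so taking the supremum over $t\in I$ gives the claim.

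There is no real obstacle, because the heavy regularity input is the assumed uniform bound \eqref{eq:kernel-section-holder-unif} on the sections. The only point to check is that the coefficient bound $|\alpha_j^{(t)}|\le C$ uses nothing beyond boundedness of the base kernel's diagonal, so it holds pathwise and uniformly in $t$; this is already established in \Cref{lem:Dt-sup-bound}. In particular, no Banach algebra estimate such as \Cref{lem:Calg} is needed here, since $D_t$ is linear in the sections with scalar coefficients.
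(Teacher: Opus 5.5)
Your proposal is correct and matches the paper's proof essentially step for step. The paper applies the triangle inequality in $C^{1,\alpha}(\O)$ to $D_t=\frac1n\sum_{j}\alpha_j^{(t)}k^{(t)}_{O_j}$, bounds $|\alpha_j^{(t)}|\le C$ via \Cref{lem:Dt-sup-bound}, and uses the assumed uniform section bound $M_{1,\alpha}$ to get $CM_{1,\alpha}$; your remark that no Banach-algebra estimate is needed is also accurate.
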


\begin{proof}
%Slice-case argument:
By definition, we have that $D_t=\frac1n\sum_{j=1}^n \alpha^{(t)}_j\,k^{(t)}_{O_j}$. Since $C^{1,\alpha}(\mathcal O)$ is a normed vector space, the triangle inequality yields
\[
\|D_t\|_{C^{1,\alpha}(\O)}
\le \frac1n\sum_{j=1}^n |\alpha^{(t)}_j|\, \|k^{(t)}_{O_j}\|_{C^{1,\alpha}(\O)}.
\]
By \Cref{lem:Dt-sup-bound}, we have $|\alpha^{(t)}_j|\le C$ uniformly in $j$ and $t$. Combining this with \Cref{eq:kernel-section-holder-unif} gives
\[
\|D_t\|_{C^{1,\alpha}(\O)}
\le
\frac1n\sum_{j=1}^n C\,M_{1,\alpha}
=
C\,M_{1,\alpha}.
\]
Taking the supremum over $t\in I$ completes the proof. 
\end{proof}

The next lemma establishes that $p \mapsto D(p)$ is Lipschitz continuous on $\mathcal{B}_M$ in the $C^{1, \alpha}$ norm, which is essential for proving existence and uniqueness of solutions to the density-valued ODE. 

\begin{lemma}\label{lem:D_Lipschitz}
There exists a constant $K_D<\infty$ (depending only on $C,M_K,M_{1,\alpha}$) such that for all $p,q\in \mathcal{B}_M$,
\[
\|D(p)-D(q)\|_{C^{1,\alpha}} \le K_D\,\|p-q\|_{C^{1,\alpha}}.
\]
\end{lemma}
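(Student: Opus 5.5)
The plan is to reduce the Lipschitz property of $D$ to a Lipschitz property of the projected kernel $K^{(p)}$, and in turn to Lipschitz dependence of the mean embedding $m_p := m_{P_p}$ on $p$. Write $m_p=\int k_o\,p(o)\,d\mu(o)$. By Proposition~\ref{prop::projection_kernel}, for any $o,o'$,
\[
K^{(p)}(o,o') = K(o,o') - \frac{m_p(o)\,m_p(o')}{\|m_p\|_{\HK}^2},
\qquad
k^{(p)}_{O_j} = k_{O_j} - \frac{m_p(O_j)}{\|m_p\|_{\HK}^2}\,m_p .
\]
Thus $\alpha_j^{(p)}$ and $k^{(p)}_{O_j}$ depend on $p$ only through the function $m_p$, the scalars $m_p(O_i)$ and the scalar $\|m_p\|_{\HK}^2=\iint K\,p\otimes p\,d\mu\,d\mu$.

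\textbf{Step 1: the embedding.} Each of these maps is Lipschitz, or bilinear hence locally Lipschitz, in $p$ on $\mathcal B_M$.

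\begin{itemize}
\item $m_p(o)-m_q(o)=\int K(o,u)(p-q)(u)\,d\mu(u)$. Differentiating under the integral and using \eqref{eq:MK_def},
\[
\|m_p-m_q\|_{C^{1,\alpha}}\le M_K\,\mu(\O)\,\|p-q\|_\infty\le 4M_K\|p-q\|_{C^{1,\alpha}},
\]
since $\mu(\O)=4$.
\item $|m_p(O_i)-m_q(O_i)|\le C\|p-q\|_{L^1}$.
\item $\bigl|\|m_p\|^2-\|m_q\|^2\bigr|\le 2C\,\mu(\O)\|p-q\|_\infty$, using $\|m_p\|_{\HK}\le1$ (more generally $\le\sqrt C$) for probability densities.
\end{itemize}

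\textbf{Step 2: the denominator, which is the main obstacle.} We need a uniform lower bound $\inf_{p\in\mathcal B_M}\|m_p\|_{\HK}^2\ge c_0>0$. I would argue this as follows: $\|m_p\|^2=\iint K(u,v)p(u)p(v)$, the Gaussian kernel satisfies $K\ge e^{-\mathrm{diam}^2/(2\sigma^2)}$ on each slice, and mass is split across at most four slices. On a common slice the integrand is bounded below, so Cauchy--Schwarz over the four slices gives $\|m_p\|^2\ge e^{-d/(2\sigma^2)}\sum_{a,y}P_p(\O_{a,y})^2\ge e^{-d/(2\sigma^2)}/4$. If cross-slice kernel values are also positive (as with the Gaussian kernel on the full embedding), this is even easier. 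The constant $c_0$ may then be absorbed into $K_D$. An alternative, if one prefers to avoid kernel specifics, is to note that \eqref{eq:M1a_def} already implicitly controls $m_p(O_j)/\|m_p\|^2$; but the explicit lower bound is cleaner.

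\textbf{Step 3: the coefficients and sections.} With $c_0$ in hand, the map $p\mapsto 1/\|m_p\|^2$ is Lipschitz with constant at most $2C\mu(\O)/c_0^2$. Products of bounded Lipschitz maps are Lipschitz, using the Banach algebra bound of Lemma~\ref{lem:Calg} and $\|m_p\|_{C^{1,\alpha}}\le\mu(\O)M_K$ for the function-valued factor. This yields
\[
|\alpha_j^{(p)}-\alpha_j^{(q)}|\le L_\alpha\|p-q\|_{C^{1,\alpha}},
\qquad
\|k^{(p)}_{O_j}-k^{(q)}_{O_j}\|_{C^{1,\alpha}}\le L_k\|p-q\|_{C^{1,\alpha}} .
\]

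\textbf{Step 4: conclusion.} Decompose
\[
D(p)-D(q)=\frac1n\sum_j\Bigl[(\alpha_j^{(p)}-\alpha_j^{(q)})\,k^{(p)}_{O_j}+\alpha_j^{(q)}\bigl(k^{(p)}_{O_j}-k^{(q)}_{O_j}\bigr)\Bigr].
\]
Bounding with $|\alpha_j^{(q)}|\le C$ (Lemma~\ref{lem:Dt-sup-bound}) and $\|k^{(p)}_{O_j}\|_{C^{1,\alpha}}\le M_{1,\alpha}$ from \eqref{eq:M1a_def} gives
\[
K_D=M_{1,\alpha}L_\alpha+C L_k,
\]
independent of $n$ and of $M$, apart from the kernel constants and $c_0$.
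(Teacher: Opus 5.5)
Your proposal is correct and follows essentially the same route as the paper. Both arguments bound, in turn, the differences of $m_p$ in $C^{1,\alpha}$, of the point values $m_p(y)$, and of $\kappa_p=\|m_p\|_{\HK}^2$; then of the projected sections $k^{(p)}_y=k_y-(m_p(y)/\kappa_p)\,m_p$ and of the coefficients $\alpha_j$; and both finish with the same decomposition of $D(p)-D(q)$, using $|\alpha_j^{(q)}|\le C$ and $\|k^{(p)}_{O_j}\|_{C^{1,\alpha}}\le M_{1,\alpha}$. The differences are cosmetic: the paper simply assumes $\kappa_0:=\inf_{r\in\mathcal B_M}\kappa_r>0$ (noting $\kappa_0\ge\inf_{s,t}K(s,t)$ for the Gaussian kernel), whereas you derive an explicit lower bound, and you correctly carry the factor $\mu(\O)=4$ that the paper drops when it writes $\|\eta\|_{L^1}\le\|\eta\|_\infty$.
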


\begin{proof}

%{\color{red} We define $k_o^{(l)}:=\Pi_l k_o$ where $\Pi_l$ is the orthogonal projection in $\mathcal H_K$ onto $\mathcal H_{K,l}={f\in\mathcal H_K:P_l=0}$. But here we use the projected section as the double-centered kernel. We should be working with $k_y^{(p)} = k_y - \frac{P_p[k_y]}{\|m_p\|^2}m_p$. Done.}

Fix $p,q\in \mathcal{B}_M$ and write $\eta:=p-q$. For any density $r\in\mathcal B_M$,
define the kernel mean embedding as before
\begin{align*}
    m_r(\cdot)&:=\int_{\mathcal O}K(\cdot,s)r(s)\,d\mu(s)\in C^{1,\alpha}(\mathcal O),\\
    \kappa_r &:= \|m_r\|_{\mathcal H_K}^2
= \iint_{\mathcal O\times\mathcal O} K(s,t)\,r(s)r(t)\,d\mu(s)d\mu(t).
\end{align*}
We work with the orthogonally projected section
\[
k_y^{(r)} := \Pi_r k_y
= k_y - \frac{P_r[k_y]}{\|m_r\|_{\mathcal H_K}^2}\,m_r
= k_y - \frac{m_r(y)}{\kappa_r}\,m_r,
\]
so that, for each $x\in\mathcal O$,
\[
k_y^{(r)}(x)=K(x,y)-\frac{m_r(y)}{\kappa_r}\,m_r(x).
\]
Assume $\kappa_0:=\inf_{r\in\mathcal B_M}\kappa_r>0$ (e.g. for the Gaussian kernel on
compact $\mathcal O$, $\kappa_0\ge \inf_{s,t\in\mathcal O}K(s,t)>0$).

Since $\mathcal O$ has finite measure and $\|\cdot\|_{C^{1,\alpha}}$ dominates $\|\cdot\|_\infty$
slice-wise, $\|\eta\|_{L^1}\le \|\eta\|_\infty\le \|\eta\|_{C^{1,\alpha}}$. Using \eqref{eq:MK_def} in \Cref{ass::kernelbound},
\begin{align}
\|m_p-m_q\|_{C^{1,\alpha}}
&=\left\|\int_{\mathcal O} K(\cdot,s)\eta(s)\,d\mu(s)\right\|_{C^{1,\alpha}}\\
&\le \int_{\mathcal O}\|K(\cdot,s)\|_{C^{1,\alpha}}|\eta(s)|\,d\mu(s) \notag\\
&\le M_K\|\eta\|_{L^1}\le M_K\|\eta\|_{C^{1,\alpha}}. \label{eq:mpmq_bound}
\end{align}
Moreover, for any fixed $y$,
\begin{equation}\label{eq:mpmq_point_bound}
|m_p(y)-m_q(y)|
=\left|\int_{\mathcal O}K(y,s)\eta(s)\,d\mu(s)\right|
\le C\|\eta\|_{L^1}\le C\|\eta\|_{C^{1,\alpha}}.
\end{equation}
Also,
\[
\kappa_p-\kappa_q
=\iint_{\mathcal O\times\mathcal O}K(s,t)\big(p(s)p(t)-q(s)q(t)\big)\,d\mu(s)d\mu(t).
\]
Note that
$p(s)p(t)-q(s)q(t)=\eta(s)p(t)+q(s)\eta(t)$, hence
\[
\big|p(s)p(t)-q(s)q(t)\big|
\le |\eta(s)|\,|p(t)| + |q(s)|\,|\eta(t)|.
\]
Since $p,q$ are densities, $\int_\mathcal{O} p\,d\mu=\int_\mathcal{O} q\,d\mu=1$ and $p,q\ge 0$; therefore,
\begin{align}
\left|\iint_{\mathcal O\times\mathcal O} K(s,t)\big(p(s)p(t)-q(s)q(t)\big)d\mu(s)d\mu(t)\right|
&\le C\iint_{\mathcal O\times\mathcal O} \big(|\eta(s)|p(t)+q(s)|\eta(t)|\big)d\mu(s)d\mu(t) \notag \\ 
&= C\left(\int_\mathcal{O} |\eta(s)|\,d\mu(s)\right)\left(\int_\mathcal{O} p(t)\,d\mu(t)\right) \notag \\
   &+ C\left(\int_\mathcal{O} q(s)d\mu(s)\right)\left(\int_\mathcal{O} |\eta(t)|d\mu(t)\right) \notag \\
&= 2C\|\eta\|_{L^1} \le 2C\|\eta\|_{C^{1,\alpha}}.
\label{eq:term3_bound}
\end{align}
Therefore,
\begin{equation}\label{eq:kappa_diff_bound}
|\kappa_p-\kappa_q|
\le 2C\|\eta\|_{L^1}\le 2C\|\eta\|_{C^{1,\alpha}}.
\end{equation}

Define $\beta_r(y):=m_r(y)/\kappa_r$. Then $|\beta_q(y)|\le C/\kappa_0$ and
\begin{align}
|\beta_p(y)-\beta_q(y)|
&\le \frac{|m_p(y)-m_q(y)|}{\kappa_0}
+|m_q(y)|\left|\frac{1}{\kappa_p}-\frac{1}{\kappa_q}\right| \notag\\
&\le \frac{C}{\kappa_0}\|\eta\|_{C^{1,\alpha}}
+\frac{C}{\kappa_0^2}|\kappa_p-\kappa_q|
\le \left(\frac{C}{\kappa_0}+\frac{2C^2}{\kappa_0^2}\right)\|\eta\|_{C^{1,\alpha}},
\label{eq:beta_diff_bound}
\end{align}
using \eqref{eq:mpmq_point_bound} and \eqref{eq:kappa_diff_bound}.

Now,
\[
k_y^{(p)}-k_y^{(q)}
= -\big(\beta_p(y)m_p-\beta_q(y)m_q\big)
= -(\beta_p(y)-\beta_q(y))m_p-\beta_q(y)(m_p-m_q).
\]
Since by \eqref{eq:MK_def}, $\|m_p\|_{C^{1,\alpha}}\le \int\|K(\cdot,s)\|_{C^{1,\alpha}}p(s)\,d\mu(s)\le M_K$,
combining \eqref{eq:mpmq_bound} and \eqref{eq:beta_diff_bound} yields
\begin{align}
\|k_y^{(p)}-k_y^{(q)}\|_{C^{1,\alpha}}
&\le |\beta_p(y)-\beta_q(y)|\,\|m_p\|_{C^{1,\alpha}}
+|\beta_q(y)|\,\|m_p-m_q\|_{C^{1,\alpha}} \notag\\
&\le M_K\left(\frac{C}{\kappa_0}+\frac{2C^2}{\kappa_0^2}\right)\|\eta\|_{C^{1,\alpha}}
+\frac{C}{\kappa_0}\,M_K\|\eta\|_{C^{1,\alpha}} \notag\\
&= M_K\left(\frac{2C}{\kappa_0}+\frac{2C^2}{\kappa_0^2}\right)\|p-q\|_{C^{1,\alpha}}.
\label{eq:k_section_lip_new}
\end{align}
Taking supremum over $y\in\mathcal O$, we obtain
\[
\sup_{y\in\mathcal O}\|k_y^{(p)}-k_y^{(q)}\|_{C^{1,\alpha}}
\le L_K \|p-q\|_{C^{1,\alpha}},
\quad
L_K:=M_K\left(\frac{2C}{\kappa_0}+\frac{2C^2}{\kappa_0^2}\right).
\]
In particular, for each $j$,
\[
\|k_{O_j}^{(p)}-k_{O_j}^{(q)}\|_{C^{1,\alpha}}
\le L_K\|p-q\|_{C^{1,\alpha}}.
\]
Recall $\alpha_j(p)=\frac1n\sum_{i=1}^n K^{(p)}(O_i,O_j)$. Therefore,
\begin{align}
|\alpha_j(p)-\alpha_j(q)|
&\le \frac1n\sum_{i=1}^n |K^{(p)}(O_i,O_j)-K^{(q)}(O_i,O_j)| \notag \\
&\le \sup_{x,y\in\mathcal{O}}|K^{(p)}(x,y)-K^{(q)}(x,y)|. \label{eq:alpha_reduce}
\end{align}
But for any fixed $y$, the quantity $|K^{(p)}(x,y)-K^{(q)}(x,y)|=|k^{(p)}_{y}(x)-k^{(q)}_{y}(x)|$
is bounded by $\|k^{(p)}_{y}-k^{(q)}_{y}\|_\infty\le \|k^{(p)}_{y}-k^{(q)}_{y}\|_{C^{1,\alpha}}$.
Hence,
\begin{equation}\label{eq:alpha_lip}
\max_{1\le j\le n}|\alpha_j(p)-\alpha_j(q)|
\le L_K\|p-q\|_{C^{1,\alpha}}.
\end{equation}
Write
\begin{align*}
D(p)-D(q)
&= \frac1n\sum_{j=1}^n\Big(\alpha_j(p)k^{(p)}_{O_j}-\alpha_j(q)k^{(q)}_{O_j}\Big) \\
&=\frac1n\sum_{j=1}^n\Big((\alpha_j(p)-\alpha_j(q))k^{(p)}_{O_j}+\alpha_j(q)(k^{(p)}_{O_j}-k^{(q)}_{O_j})\Big).    
\end{align*}
Taking $C^{1,\alpha}$ norms and using the triangle inequality,
\begin{align*}
\|D(p)-D(q)\|_{C^{1,\alpha}}
&\le \frac1n\sum_{j=1}^n\Big(
|\alpha_j(p)-\alpha_j(q)|\,\|k^{(p)}_{O_j}\|_{C^{1,\alpha}}
+|\alpha_j(q)|\,\|k^{(p)}_{O_j}-k^{(q)}_{O_j}\|_{C^{1,\alpha}}
\Big).
\end{align*}
Since $|\alpha_j(q)|\le C$ uniformly, and by \eqref{eq:M1a_def} in \Cref{ass::kernelbound},
$\|k^{(p)}_{O_j}\|_{C^{1,\alpha}}\le M_{1,\alpha}$ uniformly over $p\in \mathcal{B}_M$.
Using \eqref{eq:k_section_lip_new} and \eqref{eq:alpha_lip} , we get
\[
\|D(p)-D(q)\|_{C^{1,\alpha}}
\le (M_{1,\alpha}+C)L_K\|p-q\|_{C^{1,\alpha}}.
\]
Thus the claim holds with $K_D:=(M_{1,\alpha}+C)L_K$.
\end{proof}

\subsection{Proof of the Existence and Uniqueness of the ODE Solution}\label{append::unique}

\begin{theorem}[Existence and uniqueness of the ODE solution]
Let $p_0\in C^{1,\alpha}(\O)$ be the initial density. Assume \Cref{ass::kernelbound} holds and let $p_0\in\mathcal B_M\subset C^{1,\alpha}(\O)$. Let $F:\mathcal B_M\to C^{1,\alpha}(\O)$ be defined by $F(p):=p\,D(p)$. Then the density valued ODE in \eqref{eq::newode}, with $p_{t=0}=p_0$,
\begin{equation*}
\frac{d}{dt}p_t = F(p_t),
\end{equation*}
admits a unique solution $t\mapsto p_t\in C^1([0,T];C^{1,\alpha}(\O))$. For all $t\in[0,T]$, $p_t\in\mathcal B_M$.
\end{theorem}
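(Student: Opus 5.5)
We argue by a local Picard--Lindelöf step followed by a continuation argument driven by an a priori bound. The first step is to show that $F(p)=p\,D(p)$ is Lipschitz on $\mathcal B_M$ in the $C^{1,\alpha}$ norm. For $p,q\in\mathcal B_M$ we split
\[
F(p)-F(q)=(p-q)D(p)+q\bigl(D(p)-D(q)\bigr),
\]
and apply the Banach algebra bound of Lemma~\ref{lem:Calg} slice-wise. Lemma~\ref{lem:Dt-holder-bound} gives $\|D(p)\|_{C^{1,\alpha}}\le CM_{1,\alpha}$, Lemma~\ref{lem:D_Lipschitz} gives Lipschitz continuity of $D$, and $\|q\|_{C^{1,\alpha}}\le M$. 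Together these yield
\[
\|F(p)-F(q)\|_{C^{1,\alpha}}\le C_{\mathrm{alg}}\bigl(CM_{1,\alpha}+MK_D\bigr)\|p-q\|_{C^{1,\alpha}}.
\]
Since $\mathcal B_M$ is not open, we extend $F$ to an open neighborhood $U$ of $\mathcal B_M$ in $C^{1,\alpha}(\O)$ and invoke Theorem~\ref{thm::picard} there. For the extension we use the same formulas for $m_r$, $\kappa_r$ and $k^{(r)}_y$, which remain meaningful for any $r$ with $\kappa_r$ bounded away from zero and $\int r\,d\mu$ close to $1$. The Lipschitz estimates in Lemma~\ref{lem:D_Lipschitz} persist on a small neighborhood with slightly worse constants. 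This gives a unique local $C^1$ solution on $(-\tau,\tau)$.

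Next we verify that the solution stays in the constraint set, checking each defining property of $\mathcal B_M$ in turn.

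\emph{Mass.} We have $\frac{d}{dt}\int p_t\,d\mu=P_{p_t}[D(p_t)]=0$, because $D(p_t)\in\H_{K,p_t}$. Some care is needed here: the extension off the simplex must be chosen so that this identity holds for the actual trajectory. It does, because if $\int p_t\,d\mu=1$ at one time, the identity keeps it equal to $1$, so the trajectory never leaves the simplex.

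\emph{Positivity.} For each fixed $o$, $t\mapsto p_t(o)$ solves the scalar linear ODE $\dot u=D(p_t)(o)\,u$ with a continuous coefficient. Hence
\[
p_t(o)=p_0(o)\exp\Bigl(\int_0^tD(p_s)(o)\,ds\Bigr)\ge0.
\]

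\emph{Norm.} From the integral equation in Theorem~\ref{thm::picard}, together with Lemma~\ref{lem:Calg} and Lemma~\ref{lem:Dt-holder-bound},
\[
\|p_t\|_{C^{1,\alpha}}\le M_0+\int_0^tC_{\mathrm{alg}}CM_{1,\alpha}\|p_s\|_{C^{1,\alpha}}\,ds .
\]
Grönwall's inequality then gives $\|p_t\|_{C^{1,\alpha}}\le M_0e^{C_{\mathrm{alg}}CM_{1,\alpha}t}\le M$ for $t\le T$. This is exactly how $M$ was defined.

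The third step is continuation. Let $T^*=\sup\{s\le T:\text{the solution exists on }[0,s]\text{ and lies in }\mathcal B_M\}$. The Lipschitz constant on $\mathcal B_M$ is uniform, so Picard's local existence time $\tau$ depends only on $M$, $K_D$ and the neighborhood size, and not on the starting point in $\mathcal B_M$. If $T^*<T$, the a priori bounds above show $p_{T^*}\in\mathcal B_M$, and we can restart from $p_{T^*-\tau/2}$ to extend beyond $T^*$, which is a contradiction. Uniqueness on $[0,T]$ follows from Grönwall applied to the difference of two solutions, both of which lie in $\mathcal B_M$, where $F$ is Lipschitz.

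The main obstacle is the passage from the closed set $\mathcal B_M$ to an open domain, together with the bootstrapping it requires. The bound $M_{1,\alpha}$ of Assumption~\ref{ass::kernelbound} and the lower bound $\kappa_0>0$ are only stated on $\mathcal B_M$. Our first attempt would be to take $U$ to be a small $C^{1,\alpha}$-ball around $\mathcal B_M$, where $\kappa_r\ge\kappa_0/2$ by the continuity estimate \eqref{eq:kappa_diff_bound}. A second subtlety is that the Grönwall bound gives $\|p_t\|_{C^{1,\alpha}}\le M$ only while the Hölder bound on $D$ is valid. This is handled by the same continuity argument: the set of good times is both open and closed in $[0,T]$.
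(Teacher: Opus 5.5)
Your proposal is correct and follows the paper's proof essentially step for step: the Lipschitz bound for $F$ on $\mathcal B_M$ from Lemma~\ref{lem:Calg}, Lemma~\ref{lem:D_Lipschitz} and $\|D(p)\|_{C^{1,\alpha}}\le CM_{1,\alpha}$, then local Picard--Lindel\"of, mass conservation via $P_t[D(p_t)]=0$, positivity via the exponential formula, the Gr\"onwall bound $\|p_t\|_{C^{1,\alpha}}\le M_0e^{C_{\mathrm{alg}}CM_{1,\alpha}t}\le M$, and continuation to $[0,T]$. Your handling of the open domain and the bootstrap is more careful than the paper's, which takes $U=\{\|p\|_{C^{1,\alpha}}<2M\}$ and asserts local Lipschitz continuity there although $D$ is undefined where $\kappa_r=0$ (e.g.\ at $r=0\in U$); in the mass step, your extension directly gives $\int r\,D(r)\,d\mu=\langle D(r),m_r\rangle_{\HK}=0$ for every $r\in U$, which justifies conservation without the slightly circular appeal to normalization.
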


\begin{proof}

First, we need to prove the solution of
$\frac{d}{dt} p_t = p_t D(p_t)$ exists and is unique in $C^1(I, C^{1,\alpha}(\mathcal{O}))$ for some interval $I=(-\tau,\tau)$, where $\tau$ is a constant. Define the operator $F$ by $F(p) = p D(p)$. The equation becomes:
$\frac{d}{dt} p_t = F(p_t),$ with initial condition $p_0$.

%Is F(p)\in C^{1,\alpha}(\O)?
%Use Banach algebra property (\Cref{lem:Calg}) and $D(p)\in C^{1,\alpha}(\O)$ for $p\in\mathcal B_M$. Lemma \ref{lem:Dt-holder-bound} gives that under the uniform kernel-section bound \eqref{eq:kernel-section-holder-unif}.
Since $p \in C^{1,\alpha}(\O)$ and $D(p) \in C^{1,\alpha}(\O)$ for $p\in\mathcal B_M$ (by the definition of $D(\cdot)$ together with \Cref{ass::kernelbound} and \Cref{lem:Dt-holder-bound}), we verify that $F(p) = p D(p) \in C^{1,\alpha}(\O)$ for $p\in\mathcal B_M$. Indeed, by the Banach algebra property of $C^{1,\alpha}(\O)$ in \Cref{lem:Calg},
\begin{equation}\label{eq:F_maps_into_C1a}
\|F(p)\|_{C^{1,\alpha}}
=\|pD(p)\|_{C^{1,\alpha}}
\le C_{\mathrm{alg}}\,\|p\|_{C^{1,\alpha}}\,\|D(p)\|_{C^{1,\alpha}},
\end{equation}
so $F(p)\in C^{1,\alpha}(\O)$.

%Local Lipschitz continuity of F
%NOTE: Picard-Lindelöf requires an open domain U on which F is locally Lipschitz.
Next, we prove local Lipschitz continuity of $F$ in the $C^{1,\alpha}$ norm. Let $\mathcal{B}_R(p_0) = \{ q \in C^{1,\alpha}(\mathcal{O}) : \|q - p_0\|_{C^{1,\alpha}} \leq R \}$ for any fixed $R>0$. Set
\begin{equation*}
U:=\{p\in C^{1,\alpha}(\O):\ \|p\|_{C^{1,\alpha}}<2M\},    
\end{equation*}
which is an open subset of $C^{1,\alpha}(\O)$ containing $\mathcal B_M$ and $p_0$. By \Cref{lem:D_Lipschitz}, the map $D(\cdot)$ is Lipschitz on $\mathcal B_M$ in the $C^{1,\alpha}$ norm with constant $K_D$, and by \Cref{ass::kernelbound} together with \eqref{eq:M1a_def} we have the uniform bound
\begin{equation}\label{eq:D_unif_bound_on_BM}
\sup_{p\in\mathcal B_M}\|D(p)\|_{C^{1,\alpha}} \le C\,M_{1,\alpha}.
\end{equation}
For $p,q \in \mathcal{B}_M \cap \mathcal{B}_R(p_0)$, we estimate
\begin{equation*}
\| F(p) - F(q) \|_{C^{1,\alpha}} = \| p D(p) - q D(q) \|_{C^{1,\alpha}}.    
\end{equation*}
Write
\begin{equation*}
F(p) - F(q) = p (D(p) - D(q)) + (p - q) D(q).    
\end{equation*}
By \Cref{lem:Calg},
\begin{equation*}
\|p(D(p)-D(q))\|_{C^{1,\alpha}}
\le C_{\mathrm{alg}}\,\|p\|_{C^{1,\alpha}}\,\|D(p)-D(q)\|_{C^{1,\alpha}},    
\end{equation*}
and
\begin{equation*}
\|(p-q)D(q)\|_{C^{1,\alpha}}
\le C_{\mathrm{alg}}\,\|p-q\|_{C^{1,\alpha}}\,\|D(q)\|_{C^{1,\alpha}}.    
\end{equation*}
Using $\|p\|_{C^{1,\alpha}}\le M$ for $p\in\mathcal B_M$, \Cref{lem:D_Lipschitz}, and \eqref{eq:D_unif_bound_on_BM}, we obtain
\begin{align}
\|F(p)-F(q)\|_{C^{1,\alpha}}
&\le C_{\mathrm{alg}}\,M\,K_D\,\|p-q\|_{C^{1,\alpha}}
   + C_{\mathrm{alg}}\,\|p-q\|_{C^{1,\alpha}}\,(C\,M_{1,\alpha}) \notag\\
&= C_{\mathrm{alg}}\bigl(MK_D + C M_{1,\alpha}\bigr)\,\|p-q\|_{C^{1,\alpha}}.\label{eq:F_Lip_on_BM}
\end{align}
Thus $F$ is Lipschitz on $\mathcal B_M$ (hence locally Lipschitz on the open set $U$), with Lipschitz constant $L_F:=C_{\mathrm{alg}}\bigl(MK_D + C M_{1,\alpha}\bigr)$.

%Applying Picard-Lindelöf to get a local solution
By the Picard-Lindelöf theorem in Banach spaces (\Cref{thm::picard}) applied on the open set $U$, there exists $\tau>0$ and a unique solution $p\in C^1((-\tau,\tau);C^{1,\alpha}(\O))$  to $\frac{d}{dt}p_t = F(p_t)$ with $p_{t=0}=p_0$. The solution satisfies the integral equation
\begin{equation}\label{eq:integral_form_picard}
p_t=p_0+\int_0^t p_sD(p_s)\,ds,\qquad t\in(-\tau,\tau).
\end{equation}
Since $p_t\in C^{1,\alpha}(\O)$ for each $t$ and $C^{1,\alpha}(\O)$ embeds continuously into $C(\O)$, the ODE holds pointwise on $\O$.

%Mass conservation: $\int p_t,d\mu$ stays equal to 1 and positivity
Next, we confirm $p_t \in \mathcal{B}_M$ on $[0,T]$. First, consider the continuous linear functional \(\Phi: C^{1,\alpha}(\mathcal{O}) \to \mathbb{R}\) defined by \(\Phi(q) = \int_\mathcal{O} q(o) \, d\mu(o)\). For any small $\delta \neq 0$, by linearity and continuity of $\Phi$,
\begin{equation*}
\frac{\Phi(p(t+\delta))-\Phi(p(t))}{\delta}
=\Phi\!\left(\frac{p(t+\delta)-p(t)}{\delta}\right)
\xrightarrow[\delta\to 0]{\|\cdot\|_{C^{1,\alpha}}}  \Phi(p^{'}(t))  
\end{equation*}
Therefore,
\begin{equation*}
\frac{d}{dt} \int_{\mathcal{O}} p_t(o) \, d\mu(o)
= \int_{\mathcal{O}} \frac{d}{dt} p_t(o) \, d\mu(o)
= \int_{\mathcal{O}} p_t(o) D(p_t)(o) \, d\mu(o)
= P_t[D(p_t)].    
\end{equation*}
Under the orthogonal projection definition, $D(p_t)\in\mathcal H_{K,p_t}:=\{f\in\mathcal H_K:P_t[f]=0\}$ for every $t$ by construction, hence $P_t[D(p_t)]=0$. Since $\int p_0(o) \, d\mu(o) = 1$, we conclude $\int p_t(o) \, d\mu(o) = 1$ for all $t\in(-\tau,\tau)$. Positivity holds because for each fixed $o\in\O$, the pointwise ODE $\frac{d}{dt}p_t(o)=p_t(o)D(p_t)(o)$ implies
\begin{equation}\label{eq:explicit_solution_pointwise}
p_t(o) = p_0(o)\exp\!\left(\int_0^t D(p_s)(o)\,ds\right),
\qquad t\in(-\tau,\tau),
\end{equation}
so $p_t(o)\ge 0$ whenever $p_0(o)\ge 0$.

%$C^{1,\alpha}$-norm bound via Grönwall
Next, we check that $\|p_t\|_{C^{1,\alpha}}\leq M$ for $t\in[0,T]$. From \eqref{eq:integral_form_picard} and \Cref{lem:Calg},
\begin{align*}
\|p_t\|_{C^{1,\alpha}}
&\le \|p_0\|_{C^{1,\alpha}}+\int_0^t\|p_sD(p_s)\|_{C^{1,\alpha}} \,ds\\
&\le \|p_0\|_{C^{1,\alpha}}
+C_{\mathrm{alg}}\int_0^t\|p_s\|_{C^{1,\alpha}}\|D(p_s)\|_{C^{1,\alpha}} \,ds\\
&\le \|p_0\|_{C^{1,\alpha}}
+C_{\mathrm{alg}}\,C\,M_{1,\alpha}\int_0^t\|p_s\|_{C^{1,\alpha}} \,ds,
\end{align*}
where in the last line we used \eqref{eq:D_unif_bound_on_BM} (valid as long as $p_s\in\mathcal B_M$). Grönwall's inequality yields
\begin{equation}\label{eq:gronwall_bound_pt}
\|p_t\|_{C^{1,\alpha}}
\le \|p_0\|_{C^{1,\alpha}}\exp\!\bigl(C_{\mathrm{alg}}\,C\,M_{1,\alpha}\,t\bigr),
\qquad t\in[0,\tau).
\end{equation}
By the definition of $M$ and $T$ given above (namely $M:=M_0\exp(C_{\mathrm{alg}}CM_{1,\alpha}T)$ with $M_0:=\|p_0\|_{C^{1,\alpha}}$, and $T=\frac{2}{\delta_n}\log(2\mathfrak J(\delta_n))$), we have for all $t\in[0,T]$ that
\begin{equation*}
\|p_t\|_{C^{1,\alpha}}
\le M_0\exp(C_{\mathrm{alg}}CM_{1,\alpha}t)
\le M_0\exp(C_{\mathrm{alg}}CM_{1,\alpha}T)
= M,    
\end{equation*}
so $\|p_t\|_{C^{1,\alpha}}\le M$ on $[0,T]$.

Finally, since $F$ is locally Lipschitz on the open set $U$ and the bound \eqref{eq:gronwall_bound_pt} shows that the solution cannot exit $U$ before time $T$ (indeed $\|p_t\|_{C^{1,\alpha}}\le M<2M$ on $[0,T]$), we can extend the local solution uniquely to $[0,T]$ by the standard continuation argument for ODEs in Banach spaces (restarting the Picard--Lindelöf theorem at intermediate times). Thus there exists a unique solution
\[
p\in C^1([0,T];C^{1,\alpha}(\O))
\]
and for all $t\in[0,T]$ we have $p_t\ge 0$, $\int_\O p_t\,d\mu=1$, and $\|p_t\|_{C^{1,\alpha}}\le M$, i.e., $p_t\in\mathcal B_M$.

Until now, we have proven that for all $t\in[0,T]$, $\frac{d}{dt}p_t(o)=p_t(o)D(p_t)(o)$ holds pointwise for all $o\in \mathcal{O}$, and $p_t\in\mathcal{B}_M$. Notice that this solves \eqref{eq::newode}. Moreover, on the set $\{o\in\O:p_t(o)>0\}$, it also implies $\frac{d}{dt}\log p_t(o)=\frac{\frac{d}{dt}p_t(o)}{p_t(o)}$.

\end{proof}

\subsection{Proof of the Convergence of the Empirical Score}\label{append::convergence}

\begin{lemma}
\label{lemma:Ecnorm}
We have 
$$\sup_{p\in \mathcal{B}_M}\sup_{o\in \mathcal{O}}\|\triangledown D(p)(o)\|_{2}\leq CM_{1,\alpha}<\infty,$$
where $C, M_{1,\alpha}$ are defined in \Cref{ass::kernelbound}.
\end{lemma}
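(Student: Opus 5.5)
The plan is to reduce the gradient bound to the $C^{1,\alpha}$ bound already available for $D(p)$, namely Lemma~\ref{lem:Dt-holder-bound}, or rather the uniform bound \eqref{eq:D_unif_bound_on_BM}. First, fix $p\in\mathcal B_M$ and write
\[
D(p)=\frac1n\sum_{j=1}^n \alpha_j^{(p)}\,k^{(p)}_{O_j}.
\]
This is a finite linear combination of the projected sections $k^{(p)}_{O_j}=k_{O_j}-\frac{m_p(O_j)}{\kappa_p}m_p$. Each section is $C^{1,\alpha}$ on every slice $\O_{a,y}$ by \eqref{eq:M1a_def}. Hence $D(p)$ is differentiable in the continuous coordinate on each slice, and the gradient commutes with the finite sum:
\[
\nabla D(p)(o)=\frac1n\sum_{j=1}^n\alpha_j^{(p)}\,\nabla k^{(p)}_{O_j}(o).
\]

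Second, bound the coefficients and the section gradients. By the argument in Lemma~\ref{lem:Dt-sup-bound}, which applies verbatim with $\Pi_p$ in place of $\Pi_t$ since it only uses that orthogonal projection shrinks the diagonal, we have $|\alpha^{(p)}_j|\le C$. For the sections, the definition of the $C^{1,\alpha}$ norm gives
\[
\sup_o\|\nabla k^{(p)}_{O_j}(o)\|_2\le \sum_{i=1}^d\|\nabla_i k^{(p)}_{O_j}\|_{\infty}\le \|k^{(p)}_{O_j}\|_{C^{1,\alpha}}\le M_{1,\alpha}.
\]
Here I use $\|v\|_2\le\|v\|_1$ for a vector $v\in\R^d$, and take the maximum over the four slices.

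Third, combine via the triangle inequality:
\[
\|\nabla D(p)(o)\|_2\le \frac1n\sum_j C\,M_{1,\alpha}=CM_{1,\alpha},
\]
uniformly in $o$ and in $p\in\mathcal B_M$, since neither $C$ nor $M_{1,\alpha}$ depends on $p$. Taking suprema gives the claim. Equivalently, one could simply note that $\sup_o\|\nabla D(p)(o)\|_2\le\|D(p)\|_{C^{1,\alpha}}\le CM_{1,\alpha}$ by Lemma~\ref{lem:Dt-holder-bound} applied on $\mathcal B_M$.

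The only delicate point is the norm convention. The paper's $C^{1,\alpha}$ norm sums the sup norms of the partial derivatives, and that sum dominates the Euclidean norm of the gradient pointwise, so no dimension factor $\sqrt d$ enters. A second check is that the uniform bound in Lemma~\ref{lem:Dt-holder-bound} is stated over $t\in I$, whereas here it must hold over all of $\mathcal B_M$. This is supplied directly by \eqref{eq:M1a_def}, whose supremum is taken over $p\in\mathcal B_M$, so the transfer is immediate.
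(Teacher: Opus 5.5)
Your proposal is correct and is essentially the paper's argument. The paper bounds $\|\nabla_x D(p)(o)\|_2$ slice-wise by $\sum_i\|\nabla_i D(p)\|_\infty\le\|D(p)\|_{C^{1,\alpha}}$ and then invokes Lemma~\ref{lem:Dt-holder-bound} to get $CM_{1,\alpha}$, which is exactly your ``equivalently'' remark, whereas your main route applies the same $\ell^2\le\ell^1$, $|\alpha_j^{(p)}|\le C$ and $\|k^{(p)}_{O_j}\|_{C^{1,\alpha}}\le M_{1,\alpha}$ bounds to each summand before summing; you are also right that the supremum over $\mathcal B_M$ should be justified by \eqref{eq:M1a_def} rather than by the $t\in I$ version of that lemma.
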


\begin{proof}
Fix any slice $b\in\{0,1\}^2$ and write $v(x):=D(p)(x,b)$ on $[0,1]^d$.
By definition of the $C^{1,\alpha}$ norm on a slice,
\[
\|v\|_{C^{1,\alpha}}
=\|v\|_\infty + \sum_{i=1}^d \|\nabla_i v\|_\infty + \sum_{i=1}^d [\nabla_i v]_\alpha.
\]
Hence, since all terms are nonnegative, we have that
\[
\sum_{i=1}^d \|\nabla_i v\|_\infty \;\le\; \|v\|_{C^{1,\alpha}}.
\]
For any $x\in[0,1]^d$,
\[
\|\nabla_x v(x)\|_2
=\Big(\sum_{i=1}^d |\nabla_i v(x)|^2\Big)^{1/2}
\le \Big(\sum_{i=1}^d \|\nabla_i v\|_\infty^2\Big)^{1/2}
\le \sum_{i=1}^d \|\nabla_i v\|_\infty
\le \|v\|_{C^{1,\alpha}}.
\]
Taking the maximum over slices and using
\[
\|D(p)\|_{C^{1,\alpha}(\mathcal{O})}=\max_{b}\ \|D(p)(\cdot,b)\|_{C^{1,\alpha}([0,1]^d)},
\]
we obtain, for any $o=(x,b)\in\mathcal{O}$ and $p\in\mathcal{B}_M$,
\[
\|\nabla_x D(p)(o)\|_2
\le \|D(p)(\cdot,b)\|_{C^{1,\alpha}([0,1]^d)}
\le \|D(p)\|_{C^{1,\alpha}(\mathcal{O})}.
\]
Finally, taking the supremum over $o\in\mathcal{O}$ and $p\in\mathcal B_M$ gives
\[
\sup_{p\in\mathcal B_M}\ \sup_{o\in\mathcal{O}}\ \|\nabla_x D(p)(o)\|_2
\le \sup_{p\in\mathcal B_M}\, \|D(p)\|_{C^{1,\alpha}(\mathcal{O})}
\le CM_{1,\alpha}<\infty,
\]
by \Cref{lem:Dt-holder-bound}.
\end{proof}

\Cref{lemma:Ecnorm} proves a useful consequence of the uniform norm $C^{1,\alpha}$ control of $D(p)$ on $\mathcal{B}_M$. It yields a uniform bound on the Euclidean norm of the gradient of $D(p)$ with respect to the continuous covariate coordinate, which will be used to control how $D(p)$ varies on small neighborhoods around observed sample points. 

\begin{lemma}[Monotonicity and stationarity of the empirical log-likelihood]
Let $t \mapsto p_t$ be a sufficiently regular solution to the differential equation 
\begin{equation*}
\frac{d}{dt}\log p_t(o)=D(p_t)(o).
\end{equation*}
with $D(p_t) := \frac1n \sum_{j=1}^n \alpha_j^{(t)}\kt_{O_j}$. Then the following hold:
\begin{enumerate}
\item The map $t \mapsto \P_n[\log p_t]$ is nondecreasing on $I$.
\item For any $t_1 \in I$,
\begin{equation*}
\left.\frac{d}{dt}\P_n[\log p_t]\right|_{t=t_1} = 0
\quad\Longleftrightarrow\quad
m_n^{(t_1)} = 0 \ \text{in } \HK^{(t_1)}.
\end{equation*}
\end{enumerate}
\end{lemma}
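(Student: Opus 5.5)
We differentiate $t\mapsto \P_n[\log p_t]$ directly and rewrite the derivative as a sum of squares of the coefficients $\alpha_j^{(t)}$. Regularity is supplied by \Cref{t::picard}: on $[0,T]$ the solution lies in $C^1([0,T];C^{1,\alpha}(\O))$ and satisfies the explicit formula \eqref{eq:explicit_solution_pointwise}. In particular $\log p_t(O_i)$ is differentiable in $t$ at each observation with $p_0(O_i)>0$. Since $\P_n$ is a finite average, differentiation passes inside the sum, and the ODE gives
\[
\frac{d}{dt}\P_n[\log p_t]=\P_n[D(p_t)]=\frac1n\sum_{i=1}^n D(p_t)(O_i).
\]

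Next we substitute the definition of $D(p_t)$ and use the symmetry of $\Kt$:
\[
\frac1n\sum_{i=1}^n D(p_t)(O_i)=\frac1n\sum_{j=1}^n \alpha_j^{(t)}\,\frac1n\sum_{i=1}^n \Kt(O_i,O_j)=\frac1n\sum_{j=1}^n\bigl(\alpha_j^{(t)}\bigr)^2\ \ge 0 .
\]
This is the identity behind $s_t$ in (SC2). Because the derivative is nonnegative wherever it exists, part (1) follows from the mean value theorem applied on $I$.

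For part (2), the derivative at $t_1$ vanishes if and only if $\alpha_j^{(t_1)}=0$ for every $j$. By the reproducing property, $\alpha_j^{(t_1)}=m_n^{(t_1)}(O_j)$, so the derivative vanishes if and only if $m_n^{(t_1)}$ vanishes at all sample points. The direction ``$m_n^{(t_1)}=0$ implies derivative zero'' is then immediate. For the converse, Proposition~\ref{prop::empirical_riesz} gives $m_n^{(t_1)}=\frac1n\sum_i k^{(t_1)}_{O_i}$, and hence
\[
\|m_n^{(t_1)}\|_{\HK^{(t_1)}}^2=\Bigl\langle m_n^{(t_1)},\tfrac1n\textstyle\sum_i k^{(t_1)}_{O_i}\Bigr\rangle_{\HK^{(t_1)}}=\frac1n\sum_{i=1}^n m_n^{(t_1)}(O_i)=\frac1n\sum_{i=1}^n\alpha_i^{(t_1)}.
\]
This is zero whenever all $\alpha_i^{(t_1)}$ vanish, so $m_n^{(t_1)}=0$ in $\HK^{(t_1)}$. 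The key point is that $m_n^{(t_1)}$ lies in the span of the sections at the data, so vanishing at the data points forces vanishing of the RKHS norm. No invertibility of the Gram matrix $G^{(t_1)}$ is needed, which matters because $G^{(t_1)}$ can be singular after centering.

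The main obstacle is making ``sufficiently regular'' precise so that $\log p_t(O_i)$ is finite and differentiable. This requires $p_t(O_i)>0$, which follows from \eqref{eq:explicit_solution_pointwise} once $p_0(O_i)>0$. It also requires interchanging $d/dt$ with evaluation at points. The interchange is justified by the $C^1$-in-time convergence in $C^{1,\alpha}$, which dominates sup-norm convergence, so point evaluation is a continuous linear functional. We state these conditions as the meaning of ``sufficiently regular''. If some $p_0(O_i)=0$, the log-likelihood is $-\infty$ throughout and monotonicity holds trivially.
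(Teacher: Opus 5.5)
Your proposal is correct and follows essentially the same route as the paper. Both compute $\frac{d}{dt}\P_n[\log p_t]=\frac1n\sum_j(\alpha_j^{(t)})^2$ using the symmetry of $\Kt$. Both then use that $m_n^{(t_1)}$ lies in $\operatorname{span}\{k^{(t_1)}_{O_j}\}$ and is orthogonal to it when all $\alpha_j^{(t_1)}=0$; you simply write this out as $\|m_n^{(t_1)}\|^2=\frac1n\sum_i\alpha_i^{(t_1)}$. Your regularity remarks make the paper's unspecified ``sufficiently regular'' hypothesis concrete: $p_0(O_i)>0$ together with the pointwise exponential representation.
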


\begin{proof}
Differentiating $\P_n[\log p_t]$ along the flow and using definition of $D(p_t)$ with \eqref{eq::ode} yields
\begin{align*}
\frac{d}{dt}\P_n[\log p_t]
= \P_n[D(p_t)]
&= \frac{1}{n}\sum_{i=1}^n D(p_t)(O_i)  \\
&= \frac{1}{n}\sum_{i=1}^n \frac1n\sum_{j=1}^n \alpha_j^{(t)}\,\Kt(O_j,O_i) \nonumber \\
&= \frac1n\sum_{j=1}^n \alpha_j^{(t)}\,\P_n[\kt_{O_j}] \nonumber \\
&= \frac1n\sum_{j=1}^n \bigl(\alpha_j^{(t)}\bigr)^2
\ge0. \nonumber
\end{align*}
This establishes monotonicity. If the derivative vanishes at some $t_1 \in I$, then $\alpha_j^{(t_1)} = 0$ for all $j=1,\ldots,n$. Since $\alpha_j^{(t_1)}
= \bigl\langle \kt_{O_j}, m_n^{(t_1)} \bigr\rangle_{\HK^{(t_1)}}$, this implies that $m_n^{(t_1)}$ is orthogonal to $\operatorname{span}\{\kt_{O_1},\ldots,\kt_{O_n}\}$.
However, by definition, $m_n^{(t_1)} = \frac1n\sum_{i=1}^n \kt_{O_i}
\in \operatorname{span}\{\kt_{O_1},\ldots,\kt_{O_n}\}$, and therefore $m_n^{(t_1)}=0$ in $\HK^{(t_1)}$. Conversely, if $m_n^{(t_1)} = 0$, then all $\alpha_j^{(t_1)}=0$, and the derivative
$\frac{d}{dt}\P_n[\log p_t]$ vanishes at $t_1$.
\end{proof}

\begin{theorem}
There exists $t \in [0 ,T]$ such that $\P_nD(p_t) \leq \delta_n$.
\end{theorem}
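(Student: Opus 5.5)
The argument is by contradiction, using the Lyapunov identity of Lemma~\ref{lemma::monotonicity} together with the normalization of $p_t$ guaranteed by Theorem~\ref{t::picard}. Suppose that $P_nD(p_t)>\delta_n$ for every $t\in[0,T]$. By Theorem~\ref{t::picard}, the solution exists on $[0,T]$, stays in $\mathcal B_M$, and satisfies the pointwise representation $p_t(o)=p_0(o)\exp\bigl(\int_0^t D(p_s)(o)\,ds\bigr)$. Lemma~\ref{lemma::monotonicity} gives $\frac{d}{dt}P_n[\log p_t]=P_n D(p_t)$, which is the same as $\frac1n\sum_{k}\int_0^t D(p_s)(O_k)\,ds$. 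Integrating the assumed lower bound, we get
\[
\frac1n\sum_{k=1}^n\int_0^T D(p_s)(O_k)\,ds>\delta_n T .
\]
Hence there is at least one index $k$ with $\int_0^T D(p_s)(O_k)\,ds>\delta_n T$.

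The next step transfers this growth from the single point $O_k$ to the slice-restricted ball $B(O_k,w(\delta_n))$. For $o$ in that ball, apply the mean value theorem on the convex slice $\O_{A_k,Y_k}$ together with Lemma~\ref{lemma:Ecnorm}, which is valid because $p_s\in\mathcal B_M$. This gives
\[
|D(p_s)(o)-D(p_s)(O_k)|\le CM_{1,\alpha}\|o-O_k\|_2<CM_{1,\alpha}\,w(\delta_n)=\delta_n/2 .
\]
Integrating over $s\in[0,T]$ yields $\int_0^T D(p_s)(o)\,ds>\delta_nT-\delta_nT/2=\delta_nT/2$ for every $o$ in the ball. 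Therefore $p_T(o)>p_0(o)\exp(\delta_n T/2)=p_0(o)\cdot 2\mathfrak J(\delta_n)$ there, using the choice $T=\frac{2}{\delta_n}\log(2\mathfrak J(\delta_n))$.

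To conclude, integrate over the ball and use the definition of $\mathfrak J(\delta_n)$. This gives $\int_{B(O_k,w(\delta_n))}p_T\,d\mu> 2\mathfrak J(\delta_n)\int_{B(O_k,w(\delta_n))}p_0\,d\mu\ge 2$. But $p_T\ge0$ and $\int_\O p_T\,d\mu=1$, so this is impossible. Hence some $t\in[0,T]$ satisfies $P_nD(p_t)\le\delta_n$.

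The main obstacle is the averaging step. The contradiction hypothesis only controls the empirical average of $D(p_s)(O_k)$, not each point separately, so we must pass to a single index and then spread the bound to a whole neighborhood. This requires the gradient bound to hold uniformly along the trajectory, and that is exactly why Theorem~\ref{t::picard} confining $p_s$ to $\mathcal B_M$ on all of $[0,T]$ is essential. A secondary subtlety is the pigeonhole step: a strict average bound ensures some $k$ has a strictly larger time integral, and the argument only needs this for the integrated quantity, not for each time $s$.
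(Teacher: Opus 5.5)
Your proposal is correct and follows the same route as the paper's proof. You assume $P_nD(p_t)>\delta_n$ on $[0,T]$, integrate and pigeonhole to a single index $k$, spread the bound over $B(O_k,w(\delta_n))$ via the gradient bound of \Cref{lemma:Ecnorm} (valid since $p_s\in\mathcal B_M$ by \Cref{t::picard}), and contradict $\int_\O p_T\,d\mu=1$, even making explicit the step $\exp(\delta_nT/2)=2\mathfrak J(\delta_n)$ that the paper leaves implicit.
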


\begin{proof}
By \Cref{t::picard}, the solution $p_t$ exists and is unique on $[0,T]$. Suppose, for the sake of obtaining a contradiction, that $\P_nD(p_t) > \delta_n$ for all $t \in [0,T]$. Then integrating (\ref{eq::newode}) from $0$ to $T$, since $P_n\frac{d}{dt}\log p_t=\P_nD(p_t)$, we find that 
\[
P_n [ \log p_T] - P_n [ \log p_0] > \delta_n T.
\]
Using the representation
\begin{equation}\label{e:exprepT}
p_T = p_{0} \exp \left( \int_{0}^T D(p_s) \, ds \right),
\end{equation}
this gives 
\[
\frac{1}{n} \sum_{i=1}^n \int_0^T D(p_s)(O_i) \, ds > \delta_n T.
\]
Then, there must exist an index $k$ such that $\int_0^T D(p_s)(O_k) \, ds > \delta_n T$. By \Cref{lemma:Ecnorm}, $D$ is uniformly bounded in Euclidean norm, and we have for all $o \in \mathcal{O}_k$ that 
\[
D({p_s})(o) \ge D^{*}(p_s)(O_k)  - CM_{1,\alpha} \|o - O_k\|_2.
\]
Therefore, 
\[
\int_0^T D({p_s})(o)\, ds  \ge \int_0^T D({p_s})(O_k) \, ds  - CM_{1,\alpha} T \|o - O_k\|_2.
\]
By \eqref{e:exprepT}, this implies that 
\begin{align*}
\begin{split}
p_T(o) &\ge p_0(o) \exp\left( \int_0^T D({p_s})(O_k) \, ds  - CM_{1,\alpha} T \|o - O_k\|_2 \right)\\
&> p_0(o) \exp\left(  \delta_n T  -   CM_{1,\alpha} T \|o - O_k\|_2  \right) .
\end{split}
\end{align*}
Finally, we have that
\begin{align*}
\begin{split}
\int_{\mathcal{O}_{A_k,Y_k}} p_T(o) \, d\mu(o) & \ge
\int_{B(O_k, w)} p_T(o) \, d\mu(o)  \\
&> 
\int_{B(O_k, w)} p_0(o) \exp\left( \frac{\delta_n T}{2} \right) \, d\mu(o) \\
& = 
\exp\left( \frac{\delta_n T}{2} \right) \int_{B(O_k, w)} p_0(o)  \, d\mu(o).
\end{split}
\end{align*}
Since $p_T$ is a probability density, we reach a contradiction when 
\[
\exp\left( \frac{\delta_n T}{2} \right)
> \left(\int_{B(O_k, w(\delta_n))} p_0(o)  \, d\mu(o)\right)^{-1}.
\]
This completes the proof.
\end{proof}

\subsection{Asymptotic Linearity and Efficiency}
\label{append::efficiency}

\begin{theorem}
Let $\Psi: \mathcal{M}\to \R$ be a pathwise-differentiable functional of the distribution $P$ with canonical gradient $\phi^*_{P}\in L_0^2(P)$. Let $t_n\in [0,T]$, which satisfies $\P_nD(p_{t_n})\leq \delta_n$, and $\hat P_n:=P_{t_n}$. Under Assumptions \ref{ass::emprocess}, \ref{ass::2orderrem}, \ref{ass::kernelbound}, \ref{ass::universal}, $\P_n\phi^*_{P_{t_n}}=o_{P^*}(n^{-1/2})$ and the ULFS--KDPE estimator satisfies 
\begin{equation*}
\Psi(\hat P_n)-\Psi(P^*)=\P_n\phi^*_{P^*}+o_{P^*}(n^{-1/2}).
\end{equation*}
\end{theorem}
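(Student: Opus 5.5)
The plan has two stages. First show that the plug-in bias term (Term~1 in the von Mises expansion of \Cref{prop::vmHatP}) vanishes at rate $o_{P^*}(n^{-1/2})$. Then insert this into the expansion together with \Cref{ass::emprocess} and \Cref{ass::2orderrem}. The second stage is immediate: with $\hat P_n = P_{t_n}$,
\[
\Psi(\hat P_n)-\Psi(P^*) = \P_n\phi^*_{P^*} - \P_n\phi^*_{\hat P_n} + (\P_n-P^*)(\phi^*_{\hat P_n}-\phi^*_{P^*}) + R_2(P^*,\hat P_n).
\]
The last two terms are $o_{P^*}(n^{-1/2})$ by the two assumptions, so everything reduces to showing $\P_n\phi^*_{P_{t_n}}=o_{P^*}(n^{-1/2})$.

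For the bias term, I would first translate the stopping condition into control of the empirical mean embedding. By the computation in the proof of \Cref{lemma::monotonicity},
\[
\P_n D(p_{t_n}) = \frac1n\sum_j (\alpha_j^{(t_n)})^2 = \frac1n\|\valpha^{(t_n)}\|_2^2 \le \delta_n .
\]
This gives $\max_j|\alpha_j^{(t_n)}|\le \sqrt{n\delta_n}$. Moreover $\|m_n^{(t_n)}\|^2_{\HK^{(t_n)}} = \P_n[m_n^{(t_n)}] = \frac1n\sum_j\alpha_j^{(t_n)} \le \sqrt{\delta_n}$ by Cauchy--Schwarz. Since $\delta_n=o(n^{-2})$, it follows that $\|m_n^{(t_n)}\|_{\HK^{(t_n)}}=o(n^{-1/2})$. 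The key identity from \Cref{prop::empirical_riesz} then gives, for every $h\in\HK^{(t_n)}$,
\[
|\P_n h| \le \|h\|_{\HK^{(t_n)}}\|m_n^{(t_n)}\|_{\HK^{(t_n)}} .
\]

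Next I would approximate the EIF by RKHS elements via \Cref{ass::universal}. Fix $\omega\in\Omega'$ and take the sequence $h_j\in\HK^{(t_n)}$, and decompose
\[
\P_n\phi^*_{P_{t_n}} = \P_n h_j + (\P_n-P^*)(\phi^*_{P_{t_n}}-h_j) + P^*(\phi^*_{P_{t_n}}-h_j).
\]
The first term is bounded by $\sup_j\|h_j\|\cdot o(n^{-1/2})$, using (ii). The third term is bounded by $\|\phi^*_{P_{t_n}}-h_j\|_{L^2(P^*)}$, which tends to zero as $j\to\infty$ by (i). Since $n$ is fixed here, I would send $j\to\infty$ for each $n$, or choose $j=j(n)$ large enough that this term is $\le n^{-1}$.

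The middle term is the main obstacle. Here I would combine the Donsker property of $\{h_j\}$ from (iii) with the Donsker class $\mathcal F$ of \Cref{ass::emprocess}. Differences of two Donsker classes are Donsker, and the $L^2(P^*)$ norm of the difference tends to zero. Asymptotic equicontinuity of the empirical process then gives $(\P_n-P^*)(\phi^*_{P_{t_n}}-h_{j(n)})=o_{P^*}(n^{-1/2})$. The delicate point is that $h_j$, $t_n$ and $j(n)$ are data-dependent. I would handle this by using the uniformity over $\omega$ built into (iii), together with an equicontinuity statement taken in outer probability. If that fails, the fallback is to pass $j\to\infty$ first. That works because $\P_n$ is a finite sum and $L^2(P^*)$ convergence alone does not control pointwise values, so one would need $L^2(P^*)$ convergence to be strengthened, for instance by convergence in the Gaussian-kernel RKHS norm, which yields sup-norm convergence. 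Collecting the three terms gives $\P_n\phi^*_{P_{t_n}}=o_{P^*}(n^{-1/2})$, and substituting into the expansion finishes the proof.
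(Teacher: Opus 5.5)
Your proposal is correct and follows the paper's proof essentially step by step. The shared steps are: the reduction to $\P_n\phi^*_{\hat P_n}=o_{P^*}(n^{-1/2})$ via the von Mises expansion; the bound $\|m_n^{(t_n)}\|_{\HK^{(t_n)}}\le\delta_n^{1/4}$ from Cauchy--Schwarz on $\frac1n\sum_j\alpha_j^{(t_n)}$; the Riesz bound on $\P_n h_{j_n}$ via Assumption~\ref{ass::universal}(ii); and the split of $\P_n(\phi^*_{\hat P_n}-h_{j_n})$ into a population term and an empirical-process term. You explicitly pair the Donsker class $\mathcal F$ of Assumption~\ref{ass::emprocess} with the class in Assumption~\ref{ass::universal}(iii) to justify asymptotic equicontinuity for $\phi^*_{\hat P_n}-h_{j_n}$, which makes precise a step the paper leaves implicit; the closing ``fallback'' remark is not needed and can be dropped.
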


\begin{proof}
By the von Mises expansion (\Cref{prop::vmHatP}), we have
\begin{equation}\label{eq:vonmises_decomp_ulfp}
\Psi(\hat P_n)-\Psi(P^*) = \P_n\phi^*_{P^*} - \P_n\phi^*_{\hat P_n} + (\P_n-P^*)(\phi^*_{\hat P_n}-\phi^*_{P^*}) + R_2(\hat P_n,P^*).
\end{equation}
Under Assumptions~\ref{ass::emprocess} and \ref{ass::2orderrem}, it suffices to show
$\P_n\phi^*_{\hat P_n}=o_{P^*}(n^{-1/2})$.

Fix $t\in[0,T]$ and let $m_n^{(t)}\in\mathcal H_{K,p_t}$ be the empirical mean embedding, i.e.
$P_n[f]=\langle f,m_n^{(t)}\rangle_{\mathcal H_K^{(t)}}$ for all $f\in\mathcal H_K^{(t)}$. Recall $\alpha_j^{(t)}:=\P_n[k^{(t)}_{O_j}]$ and $\P_nD(p_t)=\frac1n\sum_{j=1}^n(\alpha_j^{(t)})^2 \ge0$. Taking $f=m_n^{(t)}$ yields
\[
\P_n[m_n^{(t)}] = \langle m_n^{(t)}, m_n^{(t)}\rangle_{\mathcal H_K^{(t)}}
= \|m_n^{(t)}\|_{\mathcal H_K^{(t)}}^2.
\]
Since $\alpha_j^{(t)}=m_n^{(t)}(O_j)$, we have
\[
\P_n[m_n^{(t)}] = \frac1n\sum_{j=1}^n m_n^{(t)}(O_j)
= \frac1n\sum_{j=1}^n \alpha_j^{(t)}.
\]
By Cauchy--Schwarz,
\[
\Big(\frac1n\sum_{j=1}^n \alpha_j^{(t)}\Big)^2
\le
\frac1n\sum_{j=1}^n (\alpha_j^{(t)})^2.
\]
Finally, by the ULFP identity $\P_nD(p_t)=\frac1n\sum_{j=1}^n (\alpha_j^{(t)})^2$, we obtain
\[
\|m_n^{(t)}\|_{\mathcal H_K^{(t)}}^4
=
\Big(\P_n[m_n^{(t)}]\Big)^2
=
\Big(\frac1n\sum_{j=1}^n \alpha_j^{(t)}\Big)^2
\le
\P_nD(p_t),
\]
which implies $\|m_n^{(t)}\|_{\mathcal H_K^{(t)}} \le \{\P_nD(p_t)\}^{1/4}$.
We know $\P_nD(p_{t_n})\le \delta_n$ and $\delta_n=o(n^{-2})$, hence we obtain
\begin{equation}\label{eq:mn_rate}
\|m_n^{(t_n)}\|_{\mathcal H_K^{(t_n)}} \le \delta_n^{1/4} = o(n^{-1/2}).
\end{equation}

By Assumption~\ref{ass::universal}(i), for each sample path $\omega$ we can choose an index
$j_n(\omega)\uparrow\infty$ such that
\begin{equation}\label{eq:hj_approx_rate}
\|h_{j_n}-\phi^*_{\hat P_n}\|_{L^2(P^*)}=o_{P^*}(n^{-1/2}).
\end{equation}
Decompose
\begin{equation}\label{eq:decomp_plug_in}
\P_n\phi^*_{\hat P_n}
=
\P_n h_{j_n}
+
\P_n(\phi^*_{\hat P_n}-h_{j_n}).
\end{equation}
Since $h_{j_n}\in\mathcal H_K^{(t_n)}$, the Riesz identity yields
$\P_n h_{j_n}=\langle h_{j_n},m_n^{(t_n)}\rangle_{\mathcal H_K^{(t_n)}}$, hence
\[
|\P_n h_{j_n}|
\le
\|h_{j_n}\|_{\mathcal H_K^{(t_n)}}\,
\|m_n^{(t_n)}\|_{\mathcal H_K^{(t_n)}}.
\]
By Assumption~\ref{ass::universal}(ii) and \eqref{eq:mn_rate}, it follows that
\begin{equation}\label{eq:termA_small}
\P_n h_{j_n}=o_{P^*}(n^{-1/2}).
\end{equation}
Next, we write
\[
\P_n(\phi^*_{\hat P_n}-h_{j_n})
=
(\P_n-P^*)(\phi^*_{\hat P_n}-h_{j_n})
+
P^*(\phi^*_{\hat P_n}-h_{j_n}).
\]
By Cauchy--Schwarz and \eqref{eq:hj_approx_rate},
\begin{equation}\label{eq:termB_pop}
|P^*(\phi^*_{\hat P_n}-h_{j_n})|
\le
\|\phi^*_{\hat P_n}-h_{j_n}\|_{L^2(P^*)}
=
o_{P^*}(n^{-1/2}).
\end{equation}
Moreover, by Assumption~\ref{ass::universal}(iii), the tail class is $P^*$-Donsker.
Since $\|\phi^*_{\hat P_n}-h_{j_n}\|_{L^2(P^*)}\to0$, asymptotic equicontinuity
of the empirical process implies
\begin{equation}\label{eq:termB_emp}
(\P_n-P^*)(\phi^*_{\hat P_n}-h_{j_n})
=
o_{P^*}(n^{-1/2}).
\end{equation}
Combining \eqref{eq:termB_pop} and \eqref{eq:termB_emp} gives
\begin{equation}\label{eq:termB_small}
\P_n(\phi^*_{\hat P_n}-h_{j_n})=o_{P^*}(n^{-1/2}).
\end{equation}
Finally, \eqref{eq:decomp_plug_in} together with \eqref{eq:termA_small} and
\eqref{eq:termB_small} yields
\begin{equation}\label{eq:plug_in_bias_final}
\P_n\phi^*_{\hat P_n}=o_{P^*}(n^{-1/2}).
\end{equation}
Substituting \eqref{eq:plug_in_bias_final} into \eqref{eq:vonmises_decomp_ulfp},
and using Assumptions~\ref{ass::emprocess} and \ref{ass::2orderrem}, we obtain
\[
\Psi(\hat P_n)-\Psi(P^*)
=
\P_n\phi^*_{P^*}
+
o(n^{-1/2}),
\]
which completes the proof.
\end{proof}

%%%%%%%
%%%%%%%
\bibliographystyle{plain}
\bibliography{one_step_kdpe.bib}

\end{document}